\DeclareMathOperator{\Gal}{Gal}
\DeclareMathOperator{\tors}{tors}
\DeclareMathOperator{\Disc}{disc}
\newcommand{\Z}{\mathbb{Z}}
\newcommand{\N}{\mathbb{N}}
\newcommand{\Q}{\mathbb{Q}}	
\newcommand{\R}{\mathbb{R}}
\newcommand{\C}{\mathbb{C}}
\newcommand{\p}{\mathfrak{p}}
\newcommand{\q}{\mathfrak{q}}
\newcommand{\rank}{	\mathrm{rank}}
\newcommand{\D}{\mathcal{D}}
\newcommand{\F}{\mathcal{F}}
\newcommand{\Ektors}{E(k)_{\tors}}
\newcommand{\OK}{\mathcal{O}_{K}}
\newcommand{\Ok}{\mathcal{O}_{k}}
\newcommand{\NkQideal}{N}
\newcommand{\NkQ}{N_{k/\Q}}
\newcommand{\Inertiap}{I_{\hat{\mathfrak{P}}}}
\newtheorem*{acknowledgements*}{Acknowledgements}
\theoremstyle{plain}
\newtheorem{theorem}{Theorem}[section]
\newtheorem{prop}[theorem]{Proposition}
\newtheorem{lemma}[theorem]{Lemma}
\newtheorem{corollary}[theorem]{Corollary}
\newtheorem{remark}[theorem]{Remark}
\DeclarePairedDelimiter\abs{\lvert}{\rvert}%
\DeclarePairedDelimiter\norm{\lVert}{\rVert}%
\let\oldabs\abs
\def\abs{\@ifstar{\oldabs}{\oldabs*}}
\let\oldnorm\norm
\def\norm{\@ifstar{\oldnorm}{\oldnorm*}}
\title{Uniform bounds for the number of rational points of bounded height on
	certain elliptic curves}
\author{Marta Dujella}
\address[Marta Dujella]{University of Basel, Spiegelgasse 1, 4051 Basel, Switzerland.}\email{marta.dujella@unibas.ch}
\date{}
\begin{document}
	\maketitle
	\begin{abstract}
		Let $E$ be an elliptic curve defined over a number field $k$ and $\ell$ a prime integer. When $E$ has at least one $k$-rational point of exact order $\ell$, we derive a uniform upper bound $\exp(C \log B / \log \log B)$ for the number of points of $E(k)$ of (exponential) height at most $B$. Here the constant $C = C(k)$ depends on the number field $k$ and is effective. For $\ell = 2$ this generalizes a result of Naccarato \cite{Naccarato:2021Counting} which applies for $k=\Q$. We follow methods previously developed by Bombieri and Zannier \cite{Bombieri:2004On-the-number} and Naccarato \cite{Naccarato:2021Counting}, with the main novelty being the application of Rosen's result on bounding $\ell$-ranks of class groups in certain extensions, which is derived using relative genus theory.
	\end{abstract}
	\maketitle
	\section{Introduction} \label{Introduction}
	Let $k$ be a number field of degree  $d = [k:\Q]$ and $E$ an elliptic curve defined over $k$ by a Weierstrass equation
	\begin{equation}
	\label{weierstrass_equation}
	E: y^2=F(x) = x^3+ax^2+bx+c, \quad a,b \in k.
	\end{equation}
	For a point $P$ in a projective space, let $H(P)$ and $h(P)=\log H(P)$ denote the absolute exponential and logarithmic Weil heights, respectively. By $h(E)$ we will denote the height of the equation \eqref{weierstrass_equation} defining $E$, i.e. $h(E) := h(1:a:b:c)$, the logarithmic Weil height of a projective point $(1:a:b:c) \in \mathbb{P}^3_{k}$. Furthermore, let $\hat{h}$ be the canonical height on $E$. The precise definitions for heights used are given in Section \ref{Definitions_strategy}.
	
	For a real number $B > 0$ we denote by
	\begin{equation*}
		\mathcal{N}_E (B) := \abs{ \left\{ P \in E(k) : \hat{h}(P) \leq \log B \right\}}.
	\end{equation*}
	It is a classical result of N\'{e}ron that $ \mathcal{N}_E (B) \sim c_E (\log B)^{r/2}$, as $B \rightarrow \infty$, where $r$ is the rank of $E(k)$, which is always finite due to Mordell--Weil theorem. For applications it is often useful to have a bound on $\mathcal{N}_E (B)$ that is uniform in $E$ and $B$, see for example Corollary in \cite{Bombieri:2004On-the-number} and Sections 4 and 5 in \cite{Bombieri:2015A-problem} . The first notable result on this topic was by Bombieri and Zannier \cite{Bombieri:2004On-the-number}, who looked at elliptic curves $E$ defined over $\Q$ with $E[2] \subset E(\Q)$, where $E[2]$ denotes the subgroup of  $2$-torsion points of $E$. They showed that for these curves and for $B$ sufficiently large in terms of $E$ one has
	$
	\mathcal{N}_E (B) \leq B^{C/\sqrt{\log \log B}}
	$
	with $C$ being an absolute constant. Furthermore, in section 2 of \cite{Bombieri:2004On-the-number}, they note that the assumption $E[2] \subset \Q$ can be weakened to $E[2] \cap E(\Q) \neq \{O\}$ by performing descent via 2-isogeny. Under this weaker hypothesis, Naccarato \cite{Naccarato:2021Counting} proved a better asymptotic bound $\mathcal{N}_E (B) \leq B^{C/\log \log B}$. The main ingredient of this improvement was using a different lower bound for $\hat{h}$ of nontorsion points.
	
	In this paper we generalize this result to arbitrary number fields. Additionally, we get a bound of the same quality for the families of elliptic curves with a non-trivial $k$-rational $\ell$-torsion point, for any prime integer $\ell$.
	\begin{theorem}
		Let $k$ be a number field, $\ell$ a prime integer and $E/k$ an elliptic curve given by \eqref{weierstrass_equation}. If $E(k) \cap E[\ell] \neq \{O\}$, i.e. $E$ has  at least one point of exact order $\ell$ over $k$, then
		\label{main_thm_can}
		\[\abs{\left\{ P \in E(k) : \hat{h}(P) \leq \log B \right\}} \leq B^{C/ \log \log B}\]
		for all $B \geq e^{\max \left\{h(E), e \right\}}$, where $C = C(k,\ell)$ is an effective constant depending on the field $k$ and the prime $\ell$.
	\end{theorem}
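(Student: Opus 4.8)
The plan is to follow the Bombieri--Zannier--Naccarato strategy: reduce counting rational points of bounded canonical height to counting lattice points in a Euclidean space, but replace their ad hoc class group inputs with Rosen's bound on $\ell$-ranks. First I would pass from the curve $E$ with a $k$-rational point $T$ of exact order $\ell$ to the $\ell$-isogenous curve $E' = E/\langle T\rangle$, with the dual isogenies $\phi\colon E\to E'$ and $\hat\phi\colon E'\to E$ of degree $\ell$. The composite $\hat\phi\circ\phi$ is multiplication by $\ell$, so $E(k)/\ell E(k)$ sits inside an extension controlled by the two descent maps $E(k)/\hat\phi(E'(k))$ and $E'(k)/\phi(E(k))$. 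Standard descent via $\ell$-isogeny realizes each of these quotients inside a group built from the $S$-units and the $\ell$-torsion of the $S$-class group of $k$ (or of a small fixed extension $K = k(E[\ell], \mu_\ell)$, whose degree is bounded purely in terms of $k$ and $\ell$), where $S$ is the set of places of bad reduction together with those above $\ell$ and $\infty$. The key point is that $\#S \ll_{k,\ell} \log h(E)/\log\log h(E)$ — this is the usual prime-counting estimate, since the bad primes divide the discriminant of \eqref{weierstrass_equation} — and the $S$-unit contribution has rank $\ll \#S$.

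The novel ingredient is to bound $\dim_{\mathbb{F}_\ell}\mathrm{Cl}_S(K)[\ell]$: naively the $\ell$-rank of the $S$-class group could also grow like $\#S$, but one wants it to grow more slowly, or at least to feed into the final count in a controlled way. Here I would invoke Rosen's result, obtained via relative genus theory, that in the relevant extensions the $\ell$-rank of the class group is bounded in terms of the number of ramified primes and the $\ell$-rank below; combined with the bound on $\#S$ this gives a total descent rank $\rho \ll_{k,\ell} \log h(E)/\log\log h(E)$. Consequently $E(k)/\ell E(k)$ is generated by $\ll \rho$ elements, and more to the point a system of coset representatives can be chosen of height bounded polynomially in terms of the heights of the $S$-units and class-group generators, hence of size $\exp(O(\rho \log h(E)))$ — but the exponent is what must be carefully tracked, and this is where the logarithmic savings come from.

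Next I would translate the height bound into the lattice setting. The Mordell--Weil group modulo torsion embeds as a lattice $\Lambda \subset \R^r$ via $P \mapsto (\langle P, P_i\rangle)$ using the canonical height pairing, and $\{P : \hat h(P)\le \log B\}$ maps into a ball of radius $\asymp \sqrt{\log B}$. One then covers this ball by translates of a fundamental domain: the number of lattice points is at most the number of cosets of $\ell\Lambda$ meeting the ball, times the number of points of $\ell\Lambda$ in a slightly larger ball, and the first factor is $\le \ell^{r}$ while descent bounds $r \le \rho$. Iterating the doubling-the-radius / dividing-by-$\ell$ trick (as in Bombieri--Zannier) converts a ball of radius $R$ into one of radius $R/\ell^{m}$ at the cost of $\ell^{rm}$, and choosing $m \asymp \log\log B$ together with $r \ll \log h(E)/\log\log h(E) \ll \log\log B$ (valid because $h(E) \le \log B$ in the stated range) yields the bound $B^{C/\log\log B}$. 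The main obstacle I anticipate is not any single step but the bookkeeping in the very last stage: one must ensure that Naccarato's sharper lower bound for $\hat h$ of nontorsion points — which is what upgrades $\sqrt{\log\log B}$ to $\log\log B$ in the exponent — survives the passage to a general number field $k$, and that all the implied constants genuinely depend only on $k$ and $\ell$ and not on $E$; keeping the effectivity requires that Rosen's genus-theoretic bound be made fully explicit, which I expect to be the technically delicate part.
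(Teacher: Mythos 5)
There is a genuine quantitative gap at the heart of your argument: the claim that $\#S \ll_{k,\ell} \log h(E)/\log\log h(E)$ is off by a logarithm. The bad primes divide the (norm of the) discriminant, and the standard prime-divisor estimate gives $\omega(N) \ll \log N/\log\log N$ where $\log N \ll_k h(E)$, not $\ll \log h(E)$; so the correct bound is $\#S \ll_k h(E)/\log h(E)$, which under $h(E)\le \log B$ only yields $r \ll_k \log B/\log\log B$, not $r \ll \log\log B$. (A curve over $\Q$ whose discriminant is the product of the first $t$ primes has $h(E)\asymp t\log t$ and $t\asymp h(E)/\log h(E)$ bad primes.) With the corrected rank bound your final step collapses: the cost $\ell^{rm}$ with $m\asymp\log\log B$ becomes $B^{O(1)}$, and indeed if $r\ll\log\log B$ were true the theorem would be almost trivial, since any polynomial-in-$1/\log B$ height lower bound would give $(\log B)^{O(r)}=\exp\bigl(O((\log\log B)^2)\bigr)$ directly. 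The whole difficulty is precisely that $r$ may be as large as $\log B/\log\log B$.

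What your proposal is missing is the dichotomy that drives the actual proof: the rank is bounded by (a constant times) the number of bad primes, and the height lower bound improves as the conductor grows. Concretely, Petsche's bound gives $\hat h(P) \gg (\log N(\F_{E/k}))^{7}/\bigl(c(k)(\log B)^{6}\bigr)$ for nontorsion $P$, so the lattice-point count is at most $\abs{\Ektors}\,\bigl(c(k)\log B/\log N(\F_{E/k})\bigr)^{\frac{7}{2}r}$, while $r \ll_{k,\ell} \omega_k(\F_{E/k}) + O_{k,\ell}(1) \ll \log N(\F_{E/k})/\log\log N(\F_{E/k})$. One then has to maximize $x\mapsto \frac{x}{\log x}\log\bigl(\frac{\log B}{x}\bigr)$, which is $\ll \log B/\log\log B$; this balancing (large conductor means better height lower bound, small conductor means small rank) is what produces the exponent $C/\log\log B$, and it has no counterpart in your write-up. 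Two further items you would also need: a bound $\abs{\Ektors}\ll_k(\log B)^{2d}$ (obtained by reducing modulo two small primes of good reduction), and a separate treatment of curves with everywhere good reduction, where the conductor-based lower bound is vacuous and one uses David's lower bound for $\hat h$ instead. Your idea of descending via the $\ell$-isogeny $E\to E/\langle T\rangle$ over $k$ is a reasonable alternative to the paper's route (which passes to $K_\ell=k(E[\ell])$, whose degree over $k(\xi_\ell)$ is $1$ or $\ell$ by the Weil pairing, applies the full-level descent bound there, and pulls $Cl_{K_\ell}[\ell]$ back via Rosen's genus-theoretic inequality in terms of the ramified primes); but either way the rank bound is linear in the number of bad primes, and the theorem only follows after the conductor--height trade-off described above.
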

	Here the constant $C$ in general depends on the number field $k$. It is worth noting that if we start out with an integral model for $E$, i.e. if $a, b,c$ in \eqref{weierstrass_equation} are in $\Ok$, then we get a constant $C$ that depends only on the degree $d$ of the number field and the $\ell$-part of its class group: $\rank_{\Z / \ell \Z}Cl_k[\ell]$. For this we need to additionally assume that $k$ contains a primitive $\ell$-th root of unity. Notice that for $\ell=2$ this is always trivially satisfied.
	\begin{theorem}
		Let $k$ be a number field with $d = [k: \Q]$ and $\ell$ and $E/k$ as in Theorem \ref{main_thm_can}. Assume further that $a,b,c \in \Ok$. If $\ell \geq 3$ we also assume that $k$ contains a primitive $\ell$-th root of unity. Then
		\label{main_thm_integral}
		\[\abs{\left\{ P \in E(k) : \hat{h}(P) \leq \log B \right\}} \leq B^{C'/ \log \log B}\]
		for all $B \geq e^{\max \left\{h(E), e \right\}}$, where $C' = C'(d, \ell, \rank_{\Z / \ell \Z} Cl_k[\ell])$ is an effective constant depending only on $d, \ell$ and $\rank_{\Z / \ell \Z} Cl_k[\ell])$.
	\end{theorem}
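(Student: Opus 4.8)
The plan is to run the descent argument behind Theorem~\ref{main_thm_can} — a combination of the method of Bombieri--Zannier with Naccarato's refinement — while keeping careful track of every constant, so that the two additional hypotheses (an integral Weierstrass model and $\mu_\ell\subset k$) force the only surviving dependence on $k$ to be through $d$ and $\rank_{\Z/\ell\Z}Cl_k[\ell]$. First I would pass from the canonical to a naive height: from the standard comparison $\bigl|\hat h(P)-\tfrac12 h(x(P))\bigr|\le c_1 h(E)+c_2$ with $c_1,c_2=c_1,c_2(d)$, together with $B\ge e^{h(E)}$ (so $h(E)\le\log B$), it is enough to bound $\abs{\{P\in E(k):h(x(P))\le c_3\log B\}}$ for a suitable $c_3=c_3(d)$.

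The second step is the $\ell$-isogeny descent. Let $T\in E(k)$ have exact order $\ell$, let $\phi\colon E\to E':=E/\langle T\rangle$ be the quotient isogeny and $\hat\phi$ its dual; then $\ker\hat\phi\cong\mu_\ell$, and since $\mu_\ell\subset k$ (automatic when $\ell=2$), Kummer theory gives an injection $E(k)/\hat\phi(E'(k))\hookrightarrow k^\times/(k^\times)^\ell$, while the identification $H^1(k,\Z/\ell\Z)\cong H^1(k,\mu_\ell)\cong k^\times/(k^\times)^\ell$ yields a similar injection for $E'(k)/\phi(E(k))$. Both images lie in $k(S,\ell):=\{\alpha\in k^\times/(k^\times)^\ell:\ord_\p(\alpha)\equiv 0\pmod{\ell}\text{ for }\p\notin S\}$, where $S$ collects the finite places of bad reduction of $E$ (the same for $E'$, being isogenous), the places above $\ell$, and the archimedean places. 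Since the model is integral, every bad prime divides $\Delta_E\in\Ok$ and $\NkQ(\Delta_E)\le c_4(d)H(E)^{c_5(d)}$, whence $\abs{S}\le\omega(\Delta_E)+2d\le c_6(d)\,\log B/\log\log B$ (with $\omega$ the number of prime-ideal divisors, and for $B$ in a bounded range $\omega(\Delta_E)$ itself bounded in terms of $d$). The exact sequence $0\to\OkS^\times/(\OkS^\times)^\ell\to k(S,\ell)\to Cl_{k,S}[\ell]\to 0$ and the $S$-unit theorem (using $\mu_\ell\subset k$) then give $\dim_{\mathbb{F}_\ell}k(S,\ell)=\abs{S}+\rank_{\Z/\ell\Z}Cl_{k,S}[\ell]\le\abs{S}+\rank_{\Z/\ell\Z}Cl_k[\ell]$, the last step because $Cl_{k,S}$ is a quotient of $Cl_k$. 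This is precisely where the hypotheses pay off: no extension of $k$ has to be adjoined, and the unspecified $k$-dependence of Theorem~\ref{main_thm_can} — which there enters through the $\ell$-rank of a class group of an extension of $k$ (e.g.\ $k(\mu_\ell)$, or the cyclic extension of degree dividing $\ell$ cut out by $E[\ell]$, which is ramified only inside $S$) — is controlled by Rosen's relative genus-theory inequality in the form $\rank_\ell Cl_L\le c_7(d,\ell)(\rank_\ell Cl_k+\abs{S})$ and is absorbed into the $\abs{S}$-term anyway. In all, the number of descent classes is at most $\exp\!\bigl(c_8(d,\ell)\,\log B/\log\log B+c_9(d,\ell)\,\rank_{\Z/\ell\Z}Cl_k[\ell]\bigr)$.

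The third step, following Naccarato, bounds for a fixed class $\delta_0$ in the image of the descent map the number of $P\in E(k)$ realizing $\delta_0$ with $h(x(P))\le c_3\log B$. The equation $\delta(P)=\delta_0$ says that a fixed rational function on $E$ takes the value $\delta_0$ times an $\ell$-th power at $P$; combined with the Weierstrass equation this confines $P$ to an explicit genus-one covering curve of $E$ whose arithmetic is governed by $\delta_0$ and by places inside $S$. The crucial estimate — the point where Naccarato improves on Bombieri--Zannier — is that after the appropriate elimination of variables this reduces to counting solutions of bounded height to a norm equation $N_{A/k}(\xi)=\beta$ attached to a quadratic étale $k$-algebra $A$, with $H(\beta)\le B^{O(1)}$ and $\xi$ integral, and that the number of such $\xi$ is at most $(\log B)^{O_d(1)}\cdot B^{o(1)}$ with $o(1)=O_{d,\ell}(1/\log\log B)$: the ideal $(\xi)$ divides $(\beta)$, which has at most $\NkQ(\beta)^{o_d(1)}=B^{o(1)}$ divisors, and $\xi$ is then determined up to the unit group $\mathcal{O}_A^\times$, whose rank is $O(d)$ (so that its elements of bounded height form an orbit of size $(\log B)^{O(d)}$) — it is essential here that this is the honest unit group and not an $S$-unit group.

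Finally, multiplying the per-class bound by the number of classes, and using that $(\log B)^{O(1)}$, $2^{\omega(\Delta_E)}$, and $\exp(c_9\rank_{\Z/\ell\Z}Cl_k[\ell])$ are all $\le B^{O(1/\log\log B)}$ for $B$ beyond a threshold depending on $\rank_{\Z/\ell\Z}Cl_k[\ell]$ (smaller $B$ being handled directly), one obtains the asserted bound $B^{C'/\log\log B}$ with $C'=C'(d,\ell,\rank_{\Z/\ell\Z}Cl_k[\ell])$. I expect the main obstacle to be the Diophantine count of the third step: the norm equation $N_{A/k}(\xi)=\beta$ has ``complexity'' ($H(\beta)$, and the sizes of the $\ell$-free parts occurring) as large as a power of $B$, so the count of its solutions of bounded height must be uniform, with constants depending only on $d$ and $\ell$; this forces one to arrange that $\xi$ be a genuine algebraic integer (rather than merely $S$-integral, which would make the relevant unit rank grow with $\abs{S}$) and to rely on the divisor bound — the number of ideal divisors of $\mathfrak n$ is $N(\mathfrak n)^{o(1)}$, uniformly in the degree — together with a uniform-in-$d$ lower bound for fundamental units, rather than on any fixed-field input. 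The secondary point, decisive both for the full strength of Theorem~\ref{main_thm_can} and for keeping the constant here of the stated shape, is to ensure that the $\ell$-ranks of any auxiliary extensions arising in the descent are bounded purely in terms of $\rank_{\Z/\ell\Z}Cl_k[\ell]$, $d$, $\ell$ and the already-controlled number of ramified primes — which is exactly what Rosen's theorem supplies.
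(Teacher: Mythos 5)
The heart of your argument is the per-class Diophantine count in your third step, and that is exactly where the proposal breaks down. For general $\ell$ the fibre of the isogeny-descent map over a fixed class $\delta_0$ is not governed by a norm equation over a quadratic \'etale $k$-algebra: that reduction is special to $2$-descent, where one can factor $F(x)=(x-x_0)F_1(x)$ and play the values $x-x_0$, $F_1(x)$ against each other; for $\ell\geq 3$ the relevant homogeneous spaces are genus-one curves of degree $\ell$ and no such elimination is supplied. Even for $\ell=2$, the estimate you assert --- at most $(\log B)^{O(d)}B^{O_{d,\ell}(1/\log\log B)}$ points per class, uniformly in $E$, from the divisor bound plus a unit-orbit count --- is precisely the input that is not available: per-class counting of this kind is the original Bombieri--Zannier route and yields only the weaker exponent $C/\sqrt{\log\log B}$, and the improvement to $C/\log\log B$ (Naccarato's, and the one this paper follows) does not come from sharpening such a count; it comes from abandoning per-class counting altogether in favour of a lower bound for $\hat h$ on non-torsion points (Petsche's Theorem \ref{petsche}, supplemented by David's Theorem \ref{lemma_david} when $E$ has everywhere good reduction) combined with the covering Lemma \ref{balls} in the Mordell--Weil lattice, the rank being bounded by $O_\ell\bigl(\omega_k(\D_{E/k})+d+\rank_{\Z/\ell\Z}Cl[\ell]\bigr)$. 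Your proposal contains no height gap and no substitute for it, so the bound it claims rests entirely on an unproved (and, for $\ell\geq 3$, unformulated) counting statement; flagging it as ``the main obstacle'' does not close it.

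It is also worth saying that the statement you were asked to prove needs none of this machinery: in the paper it is pure bookkeeping on top of Theorem \ref{main_thm_can}. When $a,b,c\in\Ok$, Lemma \ref{model_lemma1} bounds $\log\lvert\NkQ(\Delta_E)\rvert$ directly by $O(d\,h(E))$, so the Minkowski constant --- the only source of $\Delta_k$ in $c_1(k)$ --- disappears (Remark \ref{remark_integral}); and since $\mu_\ell\subset k$ one has $k'=k$ in Lemma \ref{rank_bound_omega}, so the class-group input is $\rank_{\Z/\ell\Z}Cl_k[\ell]$ itself rather than the $\Delta_k$-dependent bound for $Cl_{k(\xi_\ell)}$. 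Tracing these two constants through the proof of Theorem \ref{main_thm_can} gives $C'=C'(d,\ell,\rank_{\Z/\ell\Z}Cl_k[\ell])$. Your second step, by contrast, is a legitimate alternative to the paper's Section \ref{Bounding_rank} under these hypotheses: with $\mu_\ell\subset k$ and a rational point of order $\ell$, descent via the $\ell$-isogeny and its dual over $k$ itself bounds $\dim_{\mathbb{F}_\ell}E(k)/\ell E(k)$ by $2\bigl(\lvert S\rvert+\rank_{\Z/\ell\Z}Cl_k[\ell]\bigr)+O(1)$ without passing to $k(E[\ell])$ and without Rosen's Lemma \ref{lemma_rosen} (one must still verify unramifiedness of the descent classes outside $S$). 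But that only replaces the rank bound; it does not repair the missing per-class count, so the proposal as written does not prove the theorem.
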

	
	Let us now briefly summarize our approach and give an overview of the contents of this paper. We follow a method developed by Bombieri and Zannier \cite{Bombieri:2004On-the-number}, at the core of which is the following observation: either $E$ has few places of bad reduction over $k$, in which case we get a good bound for the rank, or $E$ has many places of bad reduction, which we can then exploit to get a good lower bound for the height. In Section \ref{Definitions_strategy}, we give an overview of the counting strategy used as well as definitions of heights used throughout the paper. We use a classical lattice point counting approach in $E(k) \otimes \R $. Next, Section \ref{Lower_bound_height} mostly follows work of Naccarato and uses a result of Petsche \cite{Petsche:2006Small} on a lower bound for $\hat{h}(P)$ to bound $\mathcal{N}_E (B)$ in terms of  the rank and the conductor of $E/k$. Additionally, we have to deal with curves with everywhere good reduction, for which we use another lower bound for $\hat{h}(P)$ due to David \cite{David:1997Points}. 
	
	Section \ref{Bounding_rank} concerns deriving a suitable bound for the Mordell--Weil rank, in terms of the number of places of bad reduction of $E/k$. This part contains most new insights. We look at the extension $K_\ell:=k(E[\ell]) \supset k$, with $\ell$ a prime integer. Over this field $E$ has full $\ell$-torsion, so the problem reduces to relating $\rank_{\Z / \ell \Z}Cl_{K_\ell}[\ell]$ to $k$. Bounding torsion in class groups is in general a difficult problem. For solving this we use a result of Rosen \cite{Rosen:2014Class} on bounding $\ell$-ranks of class groups in certain extensions. This reduces the problem to studying ramified primes in extension $K_\ell \supset k$, which we do by using a N\'{e}ron--Ogg--Shafarevich type result.
	Let us remark that Rosen's result on class groups is a generalization of classical Gauss genus theory that describes $Cl_K[2]$ when $K$ is an imaginary quadratic field. Gauss showed that if the discriminant of $K$ has precisely $t$ distinct prime divisors, then $Cl_K[2]$ has order $2^{t-1}$. 
	
	To conclude this section, we show that Theorem \ref{main_thm_can} easily implies the following corollary.  For $P = [x:y:z] \in \mathbb{P}^2(k)$ we write  $x(P) = [x:z]$.
	\begin{corollary}
		Let $k$ be a number field and $E/k$ an elliptic curve as in Theorem \ref{main_thm_can}. Let $B$ be a real number with $B \geq e^{\max \left\{h(E), e\right\}}$. Then
		\[ \abs{\left\{ P \in E(k) : H(x(P)) \leq B \right\}} \leq B^{C''/ \log \log B},\]
		where $C''=C''(k)$ is a constant depending on the field $k$.
	\end{corollary}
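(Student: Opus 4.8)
The plan is to deduce the Corollary from Theorem~\ref{main_thm_can} by comparing the naive height $H(x(P))$ with the canonical height $\hat{h}(P)$. I would start from the standard estimate: there is a constant $c_1 = c_1(k)$ such that
\[
\hat{h}(P) \;\leq\; \tfrac{1}{2}\, h(x(P)) + c_1\bigl(1 + h(E)\bigr)
\]
for every point $P \in E(k)$ with finite $x$-coordinate. The point at infinity has no such coordinate, so the set appearing in the Corollary consists of affine points only; in any case $O$ would contribute at most $1$ to the count and can be ignored. The only feature of the comparison constant that matters is that it depends on $E$ only through $h(E)$, and at most linearly, because the discriminant of \eqref{weierstrass_equation} is a fixed polynomial in $a,b,c$.

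Next I would fix $P \in E(k)$ with $H(x(P)) \leq B$, so that $h(x(P)) \leq \log B$. Since $B \geq e^{\max\{h(E), e\}}$ we have both $h(E) \leq \log B$ and $1 \leq \log B$, so the displayed bound gives $\hat{h}(P) \leq C_3 \log B$ with $C_3 := \tfrac12 + 2c_1$; after enlarging $c_1$ we may assume $C_3 \geq 1$. Setting $B' := B^{C_3}$ we then have $B' \geq B \geq e^{\max\{h(E), e\}}$ and $\hat{h}(P) \leq \log B'$, whence
\[
\bigl\{\, P \in E(k) : H(x(P)) \leq B \,\bigr\} \;\subseteq\; \bigl\{\, P \in E(k) : \hat{h}(P) \leq \log B' \,\bigr\}.
\]
Applying Theorem~\ref{main_thm_can} with $B'$ in place of $B$ yields
\[
\abs{\bigl\{\, P \in E(k) : H(x(P)) \leq B \,\bigr\}} \;\leq\; (B')^{C/\log\log B'} \;=\; B^{\, C C_3 /\log\log(B^{C_3})}.
\]
Since $\log\log(B^{C_3}) = \log C_3 + \log\log B \geq \log\log B$ (using $C_3 \geq 1$ and $\log\log B > 0$, which holds because $B > e$), the exponent is at most $C C_3/\log\log B$, and the Corollary follows with $C'' := C C_3 = C''(k)$.

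I do not expect a genuine obstacle here: all the substantive work is in Theorem~\ref{main_thm_can}. The only things to get right are that the $\hat{h}$-versus-$\tfrac12 h(x(\cdot))$ discrepancy is absorbed by the hypothesis $B \geq e^{h(E)}$, that the rescaled height $B' = B^{C_3}$ still lies in the range where Theorem~\ref{main_thm_can} applies, and the trivial bookkeeping for the point at infinity.
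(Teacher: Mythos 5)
Your proposal is correct and follows essentially the same route as the paper: bound $\hat{h}(P)$ by $\tfrac12 h(x(P))$ plus a term linear in $h(E)$, absorb that term using $h(E)\leq\log B$, apply Theorem \ref{main_thm_can} to an enlarged parameter $B'$, and use $\log\log B'\geq\log\log B$ to recover the stated exponent. The only difference is cosmetic (you enlarge $\log B$ multiplicatively via $B'=B^{C_3}$ rather than additively, and you make the bookkeeping for the point at infinity explicit), and your version in fact makes it slightly clearer than the paper's displayed computation that the final constant is independent of $h(E)$.
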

	\begin{proof}
		It is known that the difference between the canonical height $\hat{h}(P)$ and $h(x(P)) = \log H(x(P))$ is bounded. For example from Remark 1.2 in \cite{Silverman:1990The-difference} we have that there exist constants $C_1, C_2 \geq 1$ such that
		\begin{equation*}
			\hat{h}(P) \leq \frac{1}{2}h(x(P)) +C_1 h(E) + C_2 \leq h(x(P)) +C_1 h(E) + C_2.
		\end{equation*}
		If we now assume that $H(x(P)) \leq B$ for some $P \in E(K)$ we see that
		\begin{equation*}
			P \in \left\{ Q \in E(k) : \hat{h}(Q) \leq \log B' \right\},
		\end{equation*}
		where $\log B' := \log B + C_1 h(E) +C_2$. Because $B' \geq B \geq e^{\max \left\{h(E),e \right\}}$, we can apply Theorem \ref{main_thm_can} to get
		\begin{align*}
			 \log \left(\abs{\left\{ P \in E(k) : H(x(P)) \leq B \right\} }\right) &\leq C \frac{\log B'}{\log \log B} = C\frac{\log B + C_1 h(E)+C_2}{\log \log B'} \\
				& \leq C\left(1+C_1 h(E)+\frac{C_2}{e}\right)\frac{\log B}{\log \log B} ,
		\end{align*}
		because $\log B \geq \max \left\{h(E),e \right\}$ and $\log B' \geq \log B$.
	\end{proof}
	
	\begin{acknowledgements*}
		\normalfont
		The author would like to thank Philipp Habegger for introducing her to the problem and many helpful discussions. Additionally, the author was supported by the Swiss National Science Foundation grant	“Diophantine Equations: Special Points, Integrality, and Beyond” (no.$200020\_184623$)."
	\end{acknowledgements*}
	
	\section{Definitions and the counting strategy} \label{Definitions_strategy}
	In this section we review some of the relevant definitions, mainly concerning heights on projective spaces and elliptic curves over number fields, as well as fix some notations that will be used throughout the article.
	
	Let $k$ be a number field and $\Ok$ its ring of integers. We write $\NkQ(x)$ for the norm of an element $x \in k$ and $\NkQideal(I)$ for the norm of an ideal $I$ of $\Ok$.
	
	By $M_k^0$ and $M_k^{\infty}$ we denote the set of all finite and infinite places of $k$ respectively, and write $M_k$ for $M_k^0 \cup M_k^{\infty}$. 
	We identify $M_k^0$ with the set of maximal ideals of $\Ok$ and normalize the absolute values as follows:
	For $\p$ a maximal ideal of $\Ok$ we define $\abs{x}_{\p} := p(\p)^{-\nu _{\p}(x) / e(\p)}$, with $p(\p)$ being the unique prime of $\N$ lying below $\p$, $e(\p)$ the ramification index of $P$ and $\nu_{\p}$ the valuation on $k$ attached to $\p$ with image $\Z \cup \left\{\infty\right\}$. Furthermore, by $f(\p)$ we denote the inertia degree of $\p$. The set $M_k^{\infty}$ of all infinite places of $k$ can be identified with the set of field embeddings $k \hookrightarrow \C$ up to complex conjugation. So, for $v  \in M_k^{\infty}$ there is $\sigma : k \hookrightarrow \C $ with $\abs{x}_v = \abs{\sigma(x)}$ for all $x \in k$.
	
	For $v = \p \in M_k^0$ we set $d_v = [k_v : \Q_p]$, where $k_v$ and $\Q_p$ are completions of $k$ and $\Q$ with respect to $v$ and $p = p(\p)$ respectively.  For $v \in M_k^{\infty}$ we set $d_v=1$ if $v$ corresponds to a real embedding, and $d_v=2$ otherwise. With this we can define the absolute (logarithmic) height. Let $n \in \N$ and $x = [x_0 : \dots : x_n] \in \mathbb{P}^n(k)$ with $x_0, \ldots, x_n \in k $, then the height of $x$ is
	\begin{align*}
		h(x) = \frac{1}{[k:\Q]}\sum_{v \in M_k} d_v \log \max \{\abs{x_0}_v, \dots, \abs{x_n}_v\}.
	\end{align*}
 	For $x = (x_1, \ldots, x_n) \in k^n$ we define the affine height of $x$ as $h(x) = h(1:x_0: \ldots :x_n)$, i.e. the projective height of $[1:x_0: \ldots : x_n]$. Furthermore, for $x \in k$ we define the (logarithmic) height as $h(x) = h([1:x])$. The exponential height is $H(x) = \exp(h(x))$, for $x \in \mathbb{P}^n(k)$, $k^n$ or $k$.
	
	Let now $E$ be an elliptic curve defined over $k$, given by a Weierstrass equation \eqref{weierstrass_equation}. We define the (naive) height of $E$ as $h(E) = h(a,b,c)$. For $P = (x,y)  \in E(k) \setminus \{O\}$, put $h_x(P) = h(x(P)) = h(x)$. Then the canonical (or N\'{e}ron-Tate) height of $P \in E(k) \setminus \{O\}$ is given by
	\begin{align*}
		\hat{h}(P) = \frac{1}{2} \lim_{n \to \infty} \frac{h_x([2^n]P)}{2^{2n}},
	\end{align*}
	while $\hat{h}(O) = 0$. 
	This defines a quadratic form on $E(k)$ that is zero exactly when $P$ is a torsion point.
	Moreover, if  $E'/k$ is another elliptic curve and $\phi: E \rightarrow E'$ an isomorphism defined over $k$, then $\hat{h}_E(P) = \hat{h}_{E'}(\phi(P))$ for all $P \in E(k)$.
	
	We denote the minimal discriminant and the conductor of $E$ over $k$ by $\D_{E/k	}$ and $\F_{E/k}$ respectively. For definitions see for example VIII.8 in \cite{Silverman:2009The-arithmetic} and IV.10 in \cite{Silverman:1994Advanced} ; let us remark here that both of those are non-zero integral ideals of $\Ok$ that are supported exactly on primes where $E$ has bad reduction. We will also need the Szpiro ratio of $E$ over $k$, defined as
	\begin{equation*}
		\label{szpiro_def}
		\sigma_{E/k} = \frac{\log \NkQideal (\D_{E/k})}{\log \NkQideal (\F_{E/k})}
	\end{equation*}
	when $E$ has at least one place of bad reduction over $k$. In the case of everywhere good reduction we put $\sigma_{E/k} = 1$. It is a well known fact (see Corollary 11.2 in \cite{Silverman:1994Advanced}) that $\nu_{P}(\D_{E/k}) \geq \nu_{P}(\F_{E/k})$ for all $P \in M_k^0$ and consequently $\sigma_{E/k} \geq 1$.
	
	By the Mordell--Weil theorem we know that $E(k) \cong \Ektors \times \Z^r$ for some  nonnegative integer $r = \rank E(k)$. This implies $E(k) \otimes  \R \cong \R^r$ and the image of $E(k)/\Ektors$ is a lattice $\Lambda_E$ of rank $r$ in this space. As explained in \cite{Bombieri:2006Heights}, Chapter 9.3., we can define a scalar product on $E(k) \otimes \R$ such that $\langle P \otimes 1 , P \otimes 1 \rangle = \hat{h}(P)$ and with this it becomes a Euclidean space with a norm $ \norm{x}  := \langle x,x \rangle ^{1/2}$. So if $P$ is a point in $E(k)$ we have that $ \norm{\overline{P}}  = \hat{h}(P)^{1/2}$, where $\overline{P}$ denotes the class of $P$ modulo $\Ektors$.
	
	Now we can see that counting the points of $E(k)$ with the canonical height at most $\log B$ is equivalent to counting the number of points of $\Lambda_E$ that are contained in the ball of radius $\sqrt{\log B}$ around $0$ in $\R^r$ (equipped with the norm $ \norm{\cdot} $  described above)  and multiplying  by  the size of $\Ektors$. We will write $B(x,R)$ and $\overline{B}(x,R)$ for an open and a closed ball around $x$ of radius $R$ in $(\R^r,\norm{\cdot} )$ respectively.
	
	We follow the counting strategy that was used by Naccarato in  \cite{Naccarato:2021Counting} and is itself  an adaptation of one used by Bombieri and Zannier in \cite{Bombieri:2004On-the-number}. The idea is to find $\rho >0 $ such that we are sure that any ball of radius $\rho$ centered at a point of $\Lambda_E$ contains at most (so exactly) one point of that lattice, in other words $\overline{B}(x,\rho) \cap \Lambda_E = \left\{ x \right\}$ for all $x \in \Lambda_E $. Then we just need to count how many such balls we need to cover the $\overline{B}(0,\sqrt{\log B}) \cap \Lambda_E$. This second part is done using a standard covering argument (see for example Lemme 6.1 in \cite{Remond:2000Decompte}) which we quote here. 	Let us remark that while the statement of the lemma in \cite{Remond:2000Decompte} is given only for $R \geq \rho$ and closed balls, the proof remains valid for all $R, \rho >0$ and open balls.
	\begin{lemma}
		\label{balls}
		Let $A$ be contained in some  closed ball of radius $R>0$ in $\R^r$ and let $\rho >0$. Then $A$ can be covered by at most $(1+\frac{2R}{\rho})^r$ open balls of radius $\rho$ centered at points of $A$.
	\end{lemma}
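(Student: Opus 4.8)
The plan is to run a standard maximal-packing argument. First I would choose a subset $\{x_1, \dots, x_N\} \subseteq A$ that is maximal with respect to the property that $\norm{x_i - x_j} \geq \rho$ for all $i \neq j$. Such a set exists and is finite: one can build it greedily, and it must be finite because $A$ is bounded (this will also drop out of the volume estimate below). By maximality, every point $y \in A$ satisfies $\norm{y - x_i} < \rho$ for some $i$, since otherwise $\{x_1,\dots,x_N,y\}$ would again be $\rho$-separated; hence the open balls $B(x_1,\rho), \dots, B(x_N,\rho)$, which are centered at points of $A$, already cover $A$. It then only remains to show $N \leq (1 + 2R/\rho)^r$.

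For this I would pass to balls of half the radius. Since the centers are pairwise at distance at least $\rho$, the open balls $B(x_i,\rho/2)$ are pairwise disjoint. On the other hand, if $\overline{B}(c,R)$ is a closed ball in $\R^r$ containing $A$, then for each $i$ and each $z \in B(x_i,\rho/2)$ we have $\norm{z-c} \leq \norm{z-x_i} + \norm{x_i-c} < R + \rho/2$, so $B(x_i,\rho/2) \subseteq B(c, R+\rho/2)$. Comparing Lebesgue volumes in $\R^r$, and using that the volume of a ball of radius $t$ is $t^r$ times the volume of the unit ball, we get
\[
N\left(\frac{\rho}{2}\right)^r \;\leq\; \left(R + \frac{\rho}{2}\right)^r,
\]
and dividing through yields $N \leq \left(1 + \tfrac{2R}{\rho}\right)^r$, as desired.

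There is no real obstacle here; the only points that deserve a word of care are that the maximal $\rho$-separated set is genuinely finite (which is immediate from the disjointness of the half-radius balls together with boundedness of $A$) and that the argument makes no use of $R \geq \rho$ nor of whether the covering balls are open or closed — the volume comparison is valid for all $R,\rho > 0$. This is precisely the mild generalization of the statement in \cite{Remond:2000Decompte} that we need.
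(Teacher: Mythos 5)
Your proof is correct: the choice of a maximal $\rho$-separated subset of $A$, the disjointness of the half-radius balls, and the volume comparison inside the ball of radius $R+\rho/2$ all go through, and indeed nothing in the argument uses $R\geq\rho$ or requires the covering balls to be closed. The paper does not prove this lemma itself but quotes it from Lemme 6.1 of \cite{Remond:2000Decompte}; your argument is the same standard packing-plus-volume comparison that underlies that reference, and it additionally substantiates the paper's remark that the statement is valid for all $R,\rho>0$ and for open balls centered at points of $A$.
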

	
	It is a well known fact (see for example \cite{Silverman:2009The-arithmetic}, Chapter VIII.9) that the torsion points $P$ in $E(k)$ are characterized as exactly those points with $\hat{h}(P)=0$. Hence a lower bound for $\hat{h}(P)$ for all nontorsion points will give us a suitable $\rho$ for the strategy described above: if for some $c>0$ we have that $\hat{h}(P) \geq c$ for all $P \in E(k) \setminus \Ektors$, then $B(x, \sqrt{c}) \cap \Lambda_E = \left\{ x \right\}$ for every $x \in \Lambda_E$.
	
	In what follows $c_1, c_2, \ldots $ always denote positive effective constants, where $c_i(d)$  and $c_i(k)$ means that the corresponding constant depends on $d=[k:\Q]$ or the field $k$, respectively. For simplicity of the notation, the dependence of the constants on $\ell$, which appears from section \ref{Bounding_rank} onwards, is implied.
	
	\section{Lower bound on the height} \label{Lower_bound_height}
	Let us recall that $k$ is a fixed number field with $d = [k: \Q]$ and $E/k$ is given by \eqref{weierstrass_equation}. It will be useful to work with a model for $E$ whose Weierstrass coefficients  are in $\Ok$, but that is still not "too big" in a certain sense, when compared to our starting equation defining $E$. The discriminant of $E$ is given by $\Delta_{E} = 16 \Disc (F) =  - 16 \left(-a^2 b^2 + 4 a^3 c + 4 b^3 - 18 a b c + 27 c^2\right)$ and we are searching for $E' \cong E$ whose discriminant is bounded in terms of $h(E) = h(a,b,c)$. We find such an equation for $E$ in the following two lemmas. For $a,b,c \in k$ we introduce the notation for the finite and infinite parts of $h(a,b,c)$:
	\begin{align*}
		h^{0}(a,b,c) &=\frac{1}{[k:\Q]} \sum_{v \in M_k^{0}} d_v \log \max \{1,\abs{a}_v,\abs{b}_v, \abs{c}_v\}, \\
		h^{\infty}(a,b,c) &=\frac{1}{[k:\Q]} \sum_{v \in M_k^{\infty}} d_v \log \max \{1,\abs{a}_v,\abs{b}_v,\abs{c}_v\}.
	\end{align*}
	Therefore, $h(a,b,c) = h^0(a,b,c)+h^{\infty}(a,b,c)$ and we remark that if $a,b,c \in \Ok$, then $h(a,b,c) = h^{\infty}(a,b,c)$.
	\begin{lemma}
		\label{model_lemma1}
		Let $E/k$ be an elliptic curve as in \eqref{weierstrass_equation} with discriminant $\Delta_{E}$. Then
		\begin{align*}
			\log \abs{\NkQ (\Delta_{E})} \leq d \log(2^4 \cdot 54) + 4d h^{\infty}(a,b,c).
		\end{align*}
		In particular, if $a,b,c \in \Ok$, we have
		\begin{align*}
			\log \abs{\NkQ (\Delta_{E})} \leq d \log(2^4 \cdot 54) + 4d h(E).
		\end{align*}
	\end{lemma}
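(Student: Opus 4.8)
The plan is to rewrite $\log\abs{\NkQ(\Delta_E)}$ as a sum of local contributions over the archimedean places of $k$ and then bound each contribution by the triangle inequality; no genus theory or anything deep is needed here, it is essentially an exercise in unwinding the definition of the height.

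First I would use that $\Delta_E\neq 0$ (since $E$ is nonsingular) and that for any nonzero $x\in k$ the element norm satisfies $\abs{\NkQ(x)}=\prod_{v\in M_k^{\infty}}\abs{x}_v^{d_v}$, which follows by sorting the embeddings $k\hookrightarrow\C$ into real ones and complex-conjugate pairs and matching them with the real and complex infinite places and their $d_v$. Taking logarithms gives
\[
\log\abs{\NkQ(\Delta_E)}=\sum_{v\in M_k^{\infty}}d_v\log\abs{\Delta_E}_v,
\]
so it suffices to bound $\log\abs{\Delta_E}_v$ at each infinite place.

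Next, fix $v\in M_k^{\infty}$ and set $M_v=\max\{1,\abs{a}_v,\abs{b}_v,\abs{c}_v\}$. Since $\abs{\cdot}_v$ is the restriction of the usual complex absolute value it is archimedean, so the ordinary triangle inequality applies and $\abs{16}_v=16$. Expanding $\Delta_E=-16(-a^2b^2+4a^3c+4b^3-18abc+27c^2)$, applying the triangle inequality, and bounding each monomial by the appropriate power of $M_v$ (using $M_v\ge 1$, so every term is $\le M_v^4$), I obtain $\abs{\Delta_E}_v\le 16\,(1+4+4+18+27)\,M_v^4=2^4\cdot 54\cdot M_v^4$, hence $\log\abs{\Delta_E}_v\le\log(2^4\cdot 54)+4\log M_v$. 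Summing this over $v\in M_k^{\infty}$ with the weights $d_v$, and using $\sum_{v\in M_k^{\infty}}d_v=[k:\Q]=d$ together with the definition of $h^{\infty}(a,b,c)$, yields exactly $\log\abs{\NkQ(\Delta_E)}\le d\log(2^4\cdot 54)+4d\,h^{\infty}(a,b,c)$. For the "in particular" assertion, $a,b,c\in\Ok$ forces $\abs{a}_v,\abs{b}_v,\abs{c}_v\le 1$ at every finite place, so $h^{0}(a,b,c)=0$ and $h(E)=h(a,b,c)=h^{\infty}(a,b,c)$, and the claimed bound follows from the first one.

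I do not expect a real obstacle: the only points requiring a little care are the normalization of the archimedean absolute values (so that the genuine triangle inequality, not merely the ultrametric one, is what gets used), the bookkeeping of the weights $d_v$, and matching the constant $54$ to the sum $1+4+4+18+27$ of the absolute values of the coefficients of the discriminant polynomial; the rest is routine.
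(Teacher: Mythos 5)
Your proposal is correct and follows essentially the same route as the paper: write $\log\abs{\NkQ(\Delta_E)}=\sum_{v\in M_k^{\infty}}d_v\log\abs{\Delta_E}_v$, bound each archimedean term by the triangle inequality as $\abs{\Delta_E}_v\le 2^4\cdot 54\,\mu_v^4$ with $\mu_v=\max\{1,\abs{a}_v,\abs{b}_v,\abs{c}_v\}$, and sum using $\sum_v d_v=d$, with the integral case following from $h(E)=h^{\infty}(a,b,c)$. No differences worth noting.
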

	\begin{proof}
		For $v \in M_k^\infty$, let $\mu_v := \max \left\{1, \abs{a}_v, \abs{b}_v,\abs{c_v}\right\}$. Then 
		\begin{equation*}
			\abs{\Delta_{E}}_v \leq 16 \left(\abs{a}_v^2 \abs{b}_v^2 + 4 \abs{a}_v^3 \abs{c}_v + 4 \abs{b}_v^3 + 18 \abs{a}_v \abs{b}_v \abs{c}_v + 27 \abs{c}_v^2\right) \leq 16 \cdot 54 \mu_v^4 .
		\end{equation*}
		With normalizations of absolute values on $k$ as in Section \ref{Definitions_strategy}, for any $x \in k^{*}$ we have $\log \abs{\NkQ(x)} = \sum_{v \in M_k^{\infty} }d_v \log \abs{x}_v$. 
		In particular,
		\begin{align}
			\log \abs{\NkQ (\Delta_{E})} &\leq \sum_{v \in M_k^{\infty}} d_v \left(\log(2^4 \cdot 54) +  4 \log \max \left\{1, \abs{a}_v, \abs{b}_v, \abs{c}_v\right\} \right) \nonumber \\
			& \leq d \log(2^4 \cdot 54) +4d \cdot h^{\infty}(a,b,c). \label{eqninfpart}
		\end{align}
		If $a,b \in \Ok$, then we have $h(E) = h(a,b,c) = h^{\infty}(a,b,c)$ which proves the final statement.
	\end{proof}
	For finding the right model in the general case, the following simple lemma will be useful.
	\begin{lemma}
		\label{lemma_alphas}
		Let $m \in \N$, $\alpha_1, \ldots, \alpha_m \in \Z$ and $n_1, \ldots, n_m \in \N = \{1, 2, 3 \ldots \}$. Then the following holds
		\begin{equation*}
			\lceil \max \left\{ \alpha_1/n_1, \ldots, \alpha_m/n_m \right\} \rceil \leq \max \left\{ 0, \alpha_1, \ldots, \alpha_m \right\} .
		\end{equation*}
	\end{lemma}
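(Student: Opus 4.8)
The plan is to pick out an index $j \in \{1, \dots, m\}$ at which the maximum on the left-hand side is attained, write $M := \alpha_j / n_j = \max\{\alpha_1/n_1, \dots, \alpha_m/n_m\}$, and then bound $\lceil M \rceil$ from above by $\max\{0, \alpha_1, \dots, \alpha_m\}$ via a short case distinction on the sign of $\alpha_j$. The whole argument is elementary; the only thing to watch is that $M$ itself is in general neither an integer nor nonnegative, so one cannot naively replace $\lceil M \rceil$ by $M$.

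First I would handle the case $\alpha_j \le 0$. Since $n_j \ge 1$, dividing by $n_j$ cannot make a nonpositive number larger, so $M = \alpha_j/n_j \le 0$; as $0$ is an integer with $0 \ge M$, the definition of the ceiling gives $\lceil M \rceil \le 0 \le \max\{0, \alpha_1, \dots, \alpha_m\}$. Next, in the case $\alpha_j > 0$, again using $n_j \ge 1$ we get $M = \alpha_j/n_j \le \alpha_j$, and now $\alpha_j$ is an integer bounding $M$ from above, so $\lceil M \rceil \le \alpha_j \le \max\{0, \alpha_1, \dots, \alpha_m\}$. Combining the two cases yields the claimed inequality.

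There is no real obstacle here: the lemma is a bookkeeping step that will be used to clear denominators when passing to an integral Weierstrass model, and the sign case split above is exactly what is needed to control the rounding introduced by the ceiling function.
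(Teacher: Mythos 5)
Your proof is correct and follows essentially the same route as the paper's: an elementary case split on signs, using that $\alpha/n \leq \alpha$ when $\alpha \geq 0$ and $n \geq 1$, and that any integer upper bound for $M$ also bounds $\lceil M \rceil$. The only cosmetic difference is that you case-split on the sign of the maximizing index, whereas the paper splits on whether all $\alpha_i$ are negative and otherwise restricts to the set of nonnegative indices.
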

	\begin{proof}
		We separate the proof into two cases, depending on whether all $\alpha_i$-s are negative or not. If $\alpha_i < 0$ for all $i \in \{1, \ldots m\}$ then the left hand side is at most zero. Meanwhile, the right hand side is exactly zero, so the claim follows in this case.
		
		If there exists $j \in \{1, \ldots m \}$ such that $\alpha_j \geq 0$, we can define $\mathcal{I} := \{i \in \{1, \ldots m\}  : \alpha_i \geq 0\} \neq \emptyset$. With this we have that 
		\begin{equation*}
			\max \left\{ 0, \alpha_1, \ldots, \alpha_m \right\} =  \max \{ \alpha_i : i \in \mathcal{I} \}
		\end{equation*}
		and
		\begin{equation*}
			 \lceil \max \left\{ \alpha_1/n_1, \ldots, \alpha_m/n_m \right\} \rceil \leq \lceil \max \{\alpha_i/n_i : i \in \mathcal{I}\} \rceil .
		\end{equation*}
		 For all $i \in \mathcal{I}$ we have $\alpha_i/n_i \leq \alpha_i$ and so $\max \{\alpha_i/n_i : i \in \mathcal{I}\} \leq \max  \{ \alpha_i : i \in \mathcal{I} \}$. Since the right hand side is an integer, this implies  $\lceil \max \{\alpha_i/n_i : i \in \mathcal{I}\}\rceil \leq \max  \{ \alpha_i : i \in \mathcal{I} \}$.
	\end{proof}
	\begin{lemma}
	\label{model}
	Let $E/k$ be an elliptic curve as in \eqref{weierstrass_equation}.
	There exists an elliptic curve $E' : y^2 = x^3 +a'x^2+b'x+c'$ isomorphic to $E$ over $k$ such that $a',b',c' \in \Ok$ and the discriminant $\Delta_{E'}$ of $E'$ satisfies the following:
	\begin{equation}
		\log \abs{\NkQ(\Delta_{E'})}   \leq d \log(2^4 \cdot 54) + 12 \log m_k + 12dh(E),
	\end{equation}
	where $m_k$ is the Minkowski constant of $k$,  that is $m_k = \frac{d!}{d^d} \left(\frac{4}{\pi}\right)^s \abs{\Delta_k}^{1/2}$, where $d = [k: \Q]$, $s$ is the number of complex embeddings of $k$ up to complex conjugation and $\Delta_k$ is the discriminant of $k$.
	\end{lemma}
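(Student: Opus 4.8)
The plan is to produce the integral model $E'$ by a suitable coordinate change $x \mapsto u^2 x$, $y \mapsto u^3 y$ for a well-chosen $u \in \Ok$ (or more precisely by clearing denominators prime-by-prime), and then to control $\Delta_{E'} = u^{12}\Delta_E$ by combining the archimedean estimate of Lemma \ref{model_lemma1} with a bound on the non-archimedean contribution coming from the denominators of $a,b,c$. First I would fix, for each finite place $v = \p$, the non-negative integer $e_\p$ that measures how far $a,b,c$ are from being $\p$-integral: writing $\nu_\p$ for the normalized valuation, the substitution scales $a \mapsto u^2 a$, $b \mapsto u^4 b$, $c \mapsto u^6 c$ (for the depressed Weierstrass form $y^2 = x^3 + ax^2 + bx + c$ the weights are $2,4,6$), so I need $2\nu_\p(u) + \nu_\p(a) \ge 0$, $4\nu_\p(u) + \nu_\p(b) \ge 0$, $6\nu_\p(u) + \nu_\p(c) \ge 0$, i.e. $\nu_\p(u) \ge \lceil \max\{-\nu_\p(a)/2,\, -\nu_\p(b)/4,\, -\nu_\p(c)/6\} \rceil$. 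This is exactly the shape to which Lemma \ref{lemma_alphas} applies, with $\alpha_i \in \{-\nu_\p(a), -\nu_\p(b), -\nu_\p(c)\}$ and $n_i \in \{2,4,6\}$, giving the clean bound $\nu_\p(u) \ge \lceil\cdots\rceil$ controlled by $\max\{0, -\nu_\p(a), -\nu_\p(b), -\nu_\p(c)\}$, whose sum over $v$ (weighted by $d_v$) is $d\, h^0(a,b,c)$.

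The genuine obstacle is that there need not exist a single $u \in \Ok$ with the prescribed valuation at every $\p$ simultaneously — that would require the relevant fractional ideal to be principal, which it is not in general. This is precisely where the Minkowski constant $m_k$ enters. I would let $\mathfrak{a} = \prod_\p \p^{\,t_\p}$ with $t_\p := \lceil \max\{-\nu_\p(a)/2, -\nu_\p(b)/4, -\nu_\p(c)/6\}\rceil$; by Lemma \ref{lemma_alphas}, $\log N(\mathfrak{a}) \le d\, h^0(a,b,c)$. Now replace $\mathfrak{a}$ within its ideal class by an integral ideal $\mathfrak{b}$ with $N(\mathfrak{b}) \le m_k$ (Minkowski's bound: every ideal class contains an integral representative of norm at most $m_k$), so $\mathfrak{a}\mathfrak{b}$ — wait, rather, pick $\mathfrak{b}$ in the class of $\mathfrak{a}^{-1}$ with $N(\mathfrak{b}) \le m_k$; then $\mathfrak{a}\mathfrak{b} = (u)$ is principal for some $u \in \Ok$, and $\nu_\p(u) \ge t_\p$ for every $\p$, so the scaled model $E'$ has $a',b',c' \in \Ok$. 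Then $N(\Delta_{E'}) = N(u)^{12} N(\Delta_E) = N(\mathfrak{a})^{12} N(\mathfrak{b})^{12} N(\Delta_E)$, hence
\[
\log\abs{\NkQ(\Delta_{E'})} \le 12\, d\, h^0(a,b,c) + 12\log m_k + \log\abs{\NkQ(\Delta_E)}.
\]

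To finish I would bound $\log\abs{\NkQ(\Delta_E)}$ by Lemma \ref{model_lemma1}, namely $\le d\log(2^4\cdot 54) + 4d\, h^\infty(a,b,c)$, and then crudely absorb: $12\,d\,h^0(a,b,c) + 4d\,h^\infty(a,b,c) \le 12d\,\bigl(h^0(a,b,c) + h^\infty(a,b,c)\bigr) = 12d\, h(E)$, using $h(a,b,c) = h(E)$ and non-negativity of $h^\infty$. Combining gives exactly
\[
\log\abs{\NkQ(\Delta_{E'})} \le d\log(2^4\cdot 54) + 12\log m_k + 12 d\, h(E),
\]
as claimed. The only points needing care are the exact torus weights $(2,4,6)$ for the $a,b,c$ in the depressed cubic (as opposed to $(1,2,3)$-type weights in a general Weierstrass equation) and checking that scaling by $u$ genuinely fixes $E$ up to $k$-isomorphism and multiplies the discriminant by $u^{12}$ — both standard, so the proof is essentially an organized bookkeeping around Lemmas \ref{model_lemma1} and \ref{lemma_alphas} plus Minkowski's theorem.
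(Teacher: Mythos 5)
Your proposal is correct and follows essentially the same route as the paper: the same substitution $a\mapsto au^{2}$, $b\mapsto bu^{4}$, $c\mapsto cu^{6}$, the same ceiling exponents $\eta_\p$ controlled via Lemma \ref{lemma_alphas} so that $\log \NkQideal(I)\le d\,h^{0}(a,b,c)$, and the same combination with Lemma \ref{model_lemma1}; the only (cosmetic) difference is that the paper applies the geometry of numbers directly to the fractional ideal $I=\prod_\p \p^{\eta_\p}$ to produce $u\in I\setminus\{0\}$ with $\abs{\NkQ(u)}\le m_k \NkQideal(I)$, whereas you invoke the equivalent class-group form of the Minkowski bound (an integral ideal of norm at most $m_k$ in the inverse class), which yields the identical estimate. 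One harmless slip: when some of your exponents $t_\p$ are negative the generator $u$ of $\mathfrak{a}\mathfrak{b}$ need not lie in $\Ok$, but, exactly as in the paper, the argument only uses $\nu_\p(u)\ge t_\p$ for all $\p$, so nothing is affected.
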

	\begin{proof}
		As $a,b$ and $c$ are not necessarily in $\Ok$, we need to find a suitable isomorphism of $E$ that will give us a Weierstrass equation with coefficients in $\Ok$, but the norm of the discriminant not being too big. This means that we are searching for $u \in k^*$ such that $a':=au^{2}$, $b':=bu^{4}$ and $c' = u^6 c$  satisfy $\abs{a'}_{\p}, \abs{b'}_{\p}, \abs{c'}_{\p} \leq 1$ for all $\p \in M_k^0$. This is equivalent to $\nu_{\p}(u) \geq \max \left\{ -\nu_{\p}(a)/2, -\nu_{\p}(b)/4, - \nu_{\p}(c)/6\right\}$ for all $\p$. Let us now set $\eta_{\p} := \lceil \max \left\{ -\nu_{\p}(a)/2, -\nu_{\p}(b)/4,  -\nu_{\p}(c)/6\right\} \rceil \in \Z$ for all $\p \in M_k^0$. We have $\eta_{\p} = 0$ for all but at most finitely many $\p$, so we can define a fractional ideal of $\Ok$ with \[I := \prod_{\p \in M_k^0} \p^{\eta_{\p}}.\]
		Now for any $u \in I \setminus \{0\}$ we will have $\nu_{\p}(u) \geq \eta_{\p}$, so $a',b'$ and $c'$ defined as above will be in $\Ok$. Therefore, we just need to find such $u$ with a small enough norm.
		
		By the geometry of numbers, there exists $u \in I \setminus \left\{0\right\}$ with $\abs{\NkQ(u)} \leq m_k \NkQideal(I)$. By the definition of $I$ and multiplicity of the ideal norm we have
		\begin{align*}
			\log \NkQideal(I) &= \sum_{\p \in M_k^0} \eta_{\p} \log \NkQideal(\p) = \sum_{\p \in M_k^0} d_\p \log \left(p(\p)^{\eta_{\p}/e(\p)}\right).
		\end{align*}
		
		Taking $m=3, \alpha_1 =- \nu_{\p}(a), \alpha_2 =  - \nu_{\p}(b), \alpha_3 =  - \nu_{\p}(c)$ and $n_1=2, n_2=4, n_3=6$ in Lemma \ref{lemma_alphas} it follows that $ \eta_{\p} \leq \max \left\{ 0, - \nu_{\p}(a), - \nu_{\p}(b), - \nu_{\p}(c) \right\}.$
		
		With this we have
		\begin{align*}
			\log \NkQideal(I) &\leq \sum_{\p \in M_k^0} d_{\p} \log p(\p)^{\max \left\{ 0, - \nu_{\p}(a), - \nu_{\p}(b),  - \nu_{\p}(c) \right\} / e(\p)} \\
				&= d \cdot h^0 (a,b,c).
		\end{align*}
		 Together with Lemma \ref{model_lemma1} this gives
		\begin{align*}
			\log \abs{\NkQ(\Delta_{E'})} &= \log \abs{\NkQ(\Delta_{E})}  + 12 \log \abs{\NkQ(u)} \\
				& \leq d \log(2^4 \cdot 54) +4d  h^{\infty}(a,b,c) + 12 \log m_k + 12 \log \NkQideal(I) \\
				& \leq  d \log(2^4 \cdot 54) +4d  h^{\infty}(a,b,c) + 12 \log m_k + 12d  h^0 (a,b,c) \\
				& \leq  d \log(2^4 \cdot 54) + 12 \log m_k + 12d h(E). \qedhere
		\end{align*}
	\end{proof}
	As in Theorem \ref{main_thm_can}, from now on we assume that $B$ is a real number with $B \geq e^{\max \left\{h(E), e\right\} }$. Taking the integral model $E' \cong E$ given by Lemma \ref{model}, we get that $\D_{E/k}$ divides $\Delta_{E'} \Ok$. With our assumption on $B$ this implies that there exists a constant $c_1(k)>0 $ such that
	\begin{equation}
		\label{norm_bound_logB}
		\log \NkQideal (\D_{E/k}) \leq \log  \abs{\NkQ(\Delta_{E'}) } \leq c_1(k) \log B ,
	\end{equation}
	namely we can take $c_1(k):= \frac{1}{e}(d \log(2^4 \cdot 54) + 12\log C_k)+12d$. 
	\begin{remark}
		\label{remark_integral}
		Lemma \ref{model_lemma1} shows that if $a,b$ and $c$ are already in $\Ok$, then $\log \abs{\NkQ(\Delta_{E})} \leq d \log(2^4 \cdot 54) +3d \cdot h(E)$. In that case we would get a corresponding bound
		\begin{equation}
			\label{norm_bound_logB_integral}
			\NkQideal (\D_{E/k}) \leq \tilde{c}_1(d) \log B,
		\end{equation}
		with $\tilde{c}_1(d) := \left(\frac{1}{e} \log(2^4 \cdot 54) + 4\right)d$ depending only on $d = [k: \Q]$. This is not without interest because if $E$ and $k$ are as in Theorem \ref{main_thm_integral}, then $c_1(k)$ is the only place where the dependence on $\Delta_k$ appears.
	\end{remark}
	\begin{remark}
		Let $E' : y^2 = x^3 + a'x^2+b'x+c' =: G(x)$ be the elliptic curve isomorphic given by Lemma \ref{model}. The canonical height of points is preserved under the isomorphism, so $\mathcal{N}_E(B) = \mathcal{N}_{E'}(B)$ for all $B \in \R$. The isomorphism also preserves all other objects associated to $E$ that we will be working with (rank, size of the torsion subgroup, minimal discriminant, conductor).
	\end{remark}
	As was done by Naccarato in \cite{Naccarato:2021Counting}, we will use the following result by Petsche, which is Theorem 2 in \cite{Petsche:2006Small}:
		\begin{theorem}
		\label{petsche}
		Let k be a number field of degree $d = [k:\Q]$ and take $c_2(d) = 10^{15} d^3$ and $c_3(d) = 104613 d $. If $E/k$ is an elliptic curve, then for all nontorsion points $P \in E(k)$ we have
		\begin{equation}
			\label{petsche_bound}
			\hat{h}(P) \geq \frac{\log \NkQideal (\D_{E/k})}{c_2(d) \sigma_{E/k}^6 \log^2(c_3(d)  \sigma_{E/k}^2)}.
		\end{equation}
	\end{theorem}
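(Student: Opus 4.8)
This is Petsche's Theorem~2 \cite{Petsche:2006Small}; to prove it from scratch I would follow the Hindry--Silverman approach to Lang's height conjecture, along the following lines. One begins with the decomposition of the canonical height into local N\'eron heights,
\[
\hat h(P) \;=\; \frac{1}{[k:\Q]}\sum_{v\in M_k} d_v\,\hat\lambda_v(P),
\]
relative to a minimal Weierstrass model, and bounds each term below according to the type of place. At a finite place of good reduction the formal group forces $\hat\lambda_v(P)\ge 0$. At an archimedean place one uses the $q$-expansion of the elliptic $\sigma$-function (equivalently the local Green's function on $E(\C)$) to get $\hat\lambda_v(P)\ge -c(d)\,(1+h(E))$, so these places contribute only a bounded error. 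At a finite place $\p$ of split multiplicative reduction the Tate parametrization $E(k_\p)\cong k_\p^\times/q_\p^{\Z}$ yields an explicit formula for $\hat\lambda_\p(P)$ governed by the second Bernoulli polynomial: writing $n_\p=\ord_\p(q_\p)=\ord_\p(\D_{E/k})$ and letting $j$ record the connected component of $\widetilde P$, the dominant term is $\tfrac12 B_2(j/n_\p)\, n_\p\log\NkQideal(\p)$, which is bounded below by $-\tfrac{1}{24}\,n_\p\log\NkQideal(\p)$; summing over all bad primes one obtains only the negative, useless ``trivial'' bound $\hat h(P)\gtrsim -\log\NkQideal(\D_{E/k})$. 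Primes of additive reduction I would reduce to this case by a controlled base change (tracking the effect on $\D_{E/k}$, $\F_{E/k}$ and $\sigma_{E/k}$), or treat directly from N\'eron's classification of local heights.

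The real content is that this extremal value of the Bernoulli term is attained only for one residue class of multiples of $P$ modulo $n_\p$, while a point lying in the kernel of reduction at $\p$ has $x$-coordinate of negative valuation and hence contributes a \emph{positive} amount to its local height there, the larger the deeper the point sits. The plan is then to run a counting argument over small multiples: if $\hat h(P)$ were below the claimed bound, then $P,2P,\dots,NP$ would all have height $\le N^2\hat h(P)$ and so lie in a small ball, whereas at each bad prime their reductions fall into $\widetilde E_{\mathrm{ns}}(\mathbb F_\p)$, a set of size $\NkQideal(\p)\mp 1$; pigeonhole then produces, for a suitably chosen bad prime (or family of bad primes) and a nonzero $m\le N$, a multiple $mP$ whose local heights at those primes jump by a definite positive amount while $\hat h(mP)=m^2\hat h(P)$ stays small --- a contradiction once the parameters are chosen well. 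The Szpiro ratio governs that choice: it measures exactly the gap between $\log\NkQideal(\D_{E/k})$, which bounds the size of the negative local corrections, and $\log\NkQideal(\F_{E/k})$, which controls the number and structure of bad primes. Accordingly one separates the regime of few bad primes with large conductor exponents (where a single prime's Bernoulli term, with its worst class removed, already gives $\gtrsim n_\p\log\NkQideal(\p)\gtrsim\sigma_{E/k}^{-1}\log\NkQideal(\D_{E/k})$) from the regime of many bad primes, where small bad primes exist and cheap collisions force a multiple deep into the formal group at many of them.

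I expect the main obstacle to be the quantitative optimization, not any individual estimate. One has to balance the (roughly linear-in-$N$) gain from the kernel-of-reduction points against the quadratic loss $m^2\hat h(P)\le N^2\hat h(P)$, keep $N$ just small enough that the collisions one needs are still forced, and also ensure the positive gain at a bad prime genuinely dominates the Bernoulli correction $\sim n_\p\log\NkQideal(\p)$ there --- which may require iterating the collision step to push multiples deeper into the formal group. Carrying all of this out so as to reach the stated shape $\hat h(P)\gg \log\NkQideal(\D_{E/k})\big/\bigl(\sigma_{E/k}^{6}\log^2(c_3(d)\sigma_{E/k}^2)\bigr)$ with $c_2(d)=10^{15}d^3$ and $c_3(d)=104613\,d$ is a matter of careful bookkeeping of the interplay --- all constrained by $\sigma_{E/k}$ --- between the number of bad primes, their norms, and their conductor exponents; the local height computations, being classical, should pose no difficulty.
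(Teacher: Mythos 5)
The paper does not prove this statement at all: it is quoted verbatim as Theorem 2 of Petsche \cite{Petsche:2006Small}, so your opening sentence identifying it as such is exactly the paper's own treatment. Your sketch also correctly outlines the Hindry--Silverman-style strategy Petsche actually follows (decomposition into local N\'eron heights, nonnegativity at good primes, Tate parametrization and Bernoulli terms at multiplicative primes, a pigeonhole argument over small multiples of $P$, with the Szpiro ratio mediating between $\log \NkQideal(\D_{E/k})$ and $\log \NkQideal(\F_{E/k})$), though, as you acknowledge, the specific constants $c_2(d)=10^{15}d^3$ and $c_3(d)=104613\,d$ only emerge from the quantitative optimization carried out in Petsche's paper, which your proposal defers rather than supplies.
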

	Firstly, we notice that for this to be a nontrivial lower bound on $\hat{h}(P)$ we need $\NkQideal(\D_{E/k}) > 1$, that is the curve $E$ needs to have bad reduction with respect to at least one finite place of $k$. This is true for all but finitely many isomorphism classes of elliptic curves over $k$, by the theorem of Shafarevich (see for example Theorem IX.6.1 in \cite{Silverman:2009The-arithmetic}). Let us concentrate first on this (most interesting) case.
	\begin{lemma}
		\label{lemma_NB_first_bound}
		If $E/k$ is an elliptic curve as in \eqref{weierstrass_equation} with at least one place of bad reduction over $k$ and $B$ a real number such that $\log B \geq \max \{h(E),e\}$, then
		\[ \mathcal{N}_E (B) \leq  \abs{\Ektors} \left(c_4(k) \frac{\log B}{ \log \NkQideal(\F_{E/k})} \right)^{\frac{7}{2}\rank E(k)} . \]
	\end{lemma}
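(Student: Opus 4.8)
The plan is to run the lattice-point counting scheme of Section~\ref{Definitions_strategy}, taking as separation radius the square root of Petsche's lower bound. Recall $\mathcal{N}_E(B) = \abs{\Ektors}\cdot\abs{\Lambda_E \cap \overline{B}(0,\sqrt{\log B})}$, where $\Lambda_E \subset (E(k)\otimes\R,\norm{\cdot})$ is the Mordell--Weil lattice of rank $r = \rank E(k)$. Since $E$ has a place of bad reduction, $\NkQideal(\D_{E/k}) > 1$, $\NkQideal(\F_{E/k}) > 1$, and $\sigma_{E/k}\geq 1$ is well defined; put
\[
c \;:=\; \frac{\log\NkQideal(\D_{E/k})}{c_2(d)\,\sigma_{E/k}^{6}\,\log^{2}\big(c_3(d)\,\sigma_{E/k}^{2}\big)} \;>\; 0 .
\]
By Theorem~\ref{petsche}, $\hat h(P)\geq c$ for every nontorsion $P\in E(k)$, so with $\rho := \sqrt{c}$ every nonzero vector of $\Lambda_E$ has norm at least $\rho$, whence $B(x,\rho)\cap\Lambda_E = \{x\}$ for all $x\in\Lambda_E$.

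I would then apply the covering Lemma~\ref{balls} to the set $A := \Lambda_E \cap \overline{B}(0,\sqrt{\log B})$, which lies in a closed ball of radius $\sqrt{\log B}$: it is covered by at most $\big(1 + 2\sqrt{\log B}/\rho\big)^{r}$ open balls of radius $\rho$ centered at points of $A$, and each such ball contains at most one point of $\Lambda_E$ by the previous step. Hence
\[
\abs{\Lambda_E \cap \overline{B}(0,\sqrt{\log B})} \;\leq\; \Big(1 + 2\sqrt{\tfrac{\log B}{c}}\Big)^{\!r},
\]
and it remains to turn $\log B / c$ into a power of $t := \log B / \log\NkQideal(\F_{E/k})$. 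Writing $\log\NkQideal(\D_{E/k}) = \sigma_{E/k}\log\NkQideal(\F_{E/k})$ and cancelling one factor of $\sigma_{E/k}$ gives
\[
\frac{\log B}{c} \;=\; c_2(d)\,\sigma_{E/k}^{5}\,\log^{2}\big(c_3(d)\,\sigma_{E/k}^{2}\big)\,t .
\]

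The only real obstacle is that the factors $\sigma_{E/k}^{6}$ and $\log^{2}(c_3(d)\sigma_{E/k}^{2})$ supplied by Petsche's bound are a priori unbounded, the Szpiro ratio not being known to be bounded unconditionally. The decisive point is that this potential largeness is already paid for by $\log\NkQideal(\D_{E/k})$, which is controlled by the integral model of Lemma~\ref{model}: by \eqref{norm_bound_logB}, $\log\NkQideal(\D_{E/k}) \leq c_1(k)\log B$, so $\sigma_{E/k}\leq c_1(k)\,t$, and moreover $t \geq 1/c_1(k)$ (since $\log\NkQideal(\F_{E/k})\leq\log\NkQideal(\D_{E/k})$). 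From $1\leq\sigma_{E/k}\leq c_1(k)t$ and $c_3(d) > 1$ one gets $\sigma_{E/k}^{5}\leq c_1(k)^{5}t^{5}$ and $\log^{2}(c_3(d)\sigma_{E/k}^{2}) \leq \log^{2}(c_3(d)c_1(k)^{2}t^{2}) \leq c_5(k)\,t$, the last inequality because $\log^{2}(c_3(d)c_1(k)^{2}t^{2})/t$ is bounded for $t\geq 1/c_1(k)$. Combining the three displays, $\log B / c \leq c_6(k)\,t^{7}$ with $c_6(k) := c_2(d)\,c_1(k)^{5}\,c_5(k)$.

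Finally, using $t \geq 1/c_1(k)$, one sees that any $c_4(k)$ with $c_4(k)^{7/2}\geq c_1(k)^{7/2} + 2\sqrt{c_6(k)}$ satisfies $1 + 2\sqrt{\log B / c} \leq 1 + 2\sqrt{c_6(k)}\,t^{7/2} \leq (c_4(k)\,t)^{7/2}$; raising to the $r$-th power and multiplying by $\abs{\Ektors}$ gives
\[
\mathcal{N}_E(B) \;\leq\; \abs{\Ektors}\,\big(c_4(k)\,t\big)^{\frac{7}{2}r} \;=\; \abs{\Ektors}\left(c_4(k)\,\frac{\log B}{\log\NkQideal(\F_{E/k})}\right)^{\!\frac{7}{2}\rank E(k)},
\]
which is the assertion. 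Every step but the manipulation of the Szpiro ratio is the routine counting scheme; that manipulation — in particular the emergence of the power $7$, hence the final exponent $\tfrac{7}{2}r$ — is the one place requiring care.
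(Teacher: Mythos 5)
Your proposal is correct and follows essentially the same route as the paper: Petsche's lower bound supplies the separation radius, the covering lemma (Lemma \ref{balls}) with $R=\sqrt{\log B}$ does the counting, and the integral-model bound \eqref{norm_bound_logB} together with $\log \NkQideal(\F_{E/k})\leq\log \NkQideal(\D_{E/k})$ produces the factor $\big(\log B/\log \NkQideal(\F_{E/k})\big)^{7/2}$. The only (harmless) difference is bookkeeping: the paper absorbs the logarithmic factor first via $\log^2\big(c_3(d)\sigma_{E/k}^2\big)\leq\sqrt{c_3(d)}\,\sigma_{E/k}$ and then eliminates $\sigma_{E/k}$, while you eliminate $\sigma_{E/k}$ first and absorb the logarithm into one extra power of $t$ (an explicit constant there follows, e.g., from $\log^2 y\leq \tfrac{16}{e^2}\sqrt{y}$, keeping everything effective).
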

	\begin{proof}
		Because we have assumed that $E$ has at least one place of bad reduction over $k$, we know that $\NkQideal(\D_{E/k}) \geq \NkQideal(\F_{E/k}) \geq 2$. Moreover, we notice that $\log^2(c_3(d)  \sigma_{E/k}^2) \leq \sqrt{ c_3(d)} \sigma_{E/k}$, because $\sqrt{c_3(d) } \sigma > 100$ and $\log^2(x^2) \leq x$ for all $x \geq 75$. So, Theorem \ref{petsche} implies that for all nontorsion $P \in E(k)$ we have
		\begin{equation}
			\hat{h}(P)  \geq \frac{\log \NkQideal(\D_{E/k})}{c_5(d) \sigma_{E/k}^7} = \frac{\left(\log \NkQideal(\F_{E/k})\right)^7}{c_5(d) \left(\log \NkQideal(\D_{E/k})\right)^6},
		\end{equation}
		where $c_5(d)=c_2(d) \sqrt{c_3(d)}$. Moreover, using \eqref{norm_bound_logB}, we can further bound this as
		\begin{align*}
			\hat{h}(P) \geq \frac{\left(\log \NkQideal(\F_{E/k})\right)^7}{c_5(d) c_1(k)^6 (\log B)^6} =: \rho_1 > 0.
		\end{align*}
		As remarked in the previous section, this allows us to use Lemma \ref{balls} with $R = \sqrt{\log B}$ and $\rho = \sqrt{\rho_1}$ to bound $\mathcal{N}_E (B)$ as
		\begin{align*}
			\mathcal{N}_E (B) \leq \abs{\Ektors} \left(1 + 2c_5(d)^{\frac{1}{2}} c_1(k)^3 \left(\frac{\log B}{\log \NkQideal(\F_{E/k})}\right)^{\frac{7}{2}}\right)^{\rank E(k)}.
		\end{align*}
		Because $\log \NkQideal(\F_{E/k}) \leq \log \NkQideal(\D_{E/k})$, again using \eqref{norm_bound_logB} we have $c_1(k) \frac{\log B}{\log \NkQideal(\F_{E/k})} \geq 1$, which means we can  bound $\mathcal{N}_E (B)$ further as follows
		\begin{align}
			\mathcal{N}_E (B) \leq \abs{ \Ektors} \left( c_4(k)  \frac{\log B}{\log \NkQideal(\F_{E/k})} \right)^{\frac{7}{2}\rank E(k)}, \label{eq4}
		\end{align}
		with $c_4(k) = \left(c_1(k)^{\frac{7}{2}}+2 c_1(k)^3 \sqrt{c_5(d)}\right)^{\frac{2}{7}}$.
	\end{proof}
	If $E$ is a curve with everywhere good reduction over $k$, then Petsche's bound becomes trivial. But we can use a different lower bound for $\hat{h}(P)$, due to David \cite{David:1997Points}, Corollaire 1.4.
	\begin{theorem}
		\label{lemma_david}
		Let $E/k$ be an elliptic curve as in \eqref{weierstrass_equation}, $j_E$ the $j$-invariant of $E$ and $J:=\max\left\{h(j_E),1\right\}$. If $j_E \in \Ok$, then there exists an effective constant $c_6>0$ such that for any nontorsion $P \in E(k)$ we have
		\[ \hat{h}(P) \geq \frac{c_6 \cdot J^3}{d^3  \left(J+\log d\right)^2}.\]
	\end{theorem}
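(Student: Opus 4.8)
The statement is a mild reformulation of Corollaire~1.4 of \cite{David:1997Points}, so the plan is to deduce it directly from that reference rather than to reprove it from scratch; the only work on our side is to match hypotheses and to absorb normalization differences into the constant $c_6$. First I would recall David's general lower bound, which by transcendence-theoretic methods yields, for a non-torsion point $P\in E(k)$, an inequality of Lehmer type
\[
\hat h(P)\;\geq\; \frac{c(d)\,\mathsf{h}^{3}}{d^{3}\,(\mathsf{h}+\log d)^{2}},
\]
where $\mathsf{h}$ is a complexity parameter attached to $E$, built from its stable Faltings height (equivalently, up to bounded terms, from $h(j_E)$) together with contributions from the primes of bad reduction of $E$.

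The role of the hypothesis $j_E\in\Ok$ is that it says exactly $\nu_{\mathfrak p}(j_E)\geq 0$ for every finite place $\mathfrak p$, i.e.\ $E$ has potential good reduction everywhere. Under this condition the contributions of the bad primes to David's parameter drop out and $\mathsf{h}$ becomes comparable, with effective $d$-dependent implied constants, to $J=\max\{h(j_E),1\}$. The second step is then routine bookkeeping: substitute this into the displayed bound, use $J\geq 1$ together with the monotonicity of $t\mapsto t^{3}/(t+\log d)^{2}$ to replace $\mathsf{h}$ by $J$ throughout at the cost of enlarging the constant, and take $c_6$ to be the resulting effective absolute constant.

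The genuine difficulty is internal to \cite{David:1997Points} and we use it as a black box: it is a lower bound for linear forms in elliptic logarithms, proved by constructing an auxiliary function vanishing to high order at multiples of $P$, applying a zero estimate on the relevant commutative group variety, and balancing the archimedean size of a period basis of $E$ --- which under potential good reduction is governed by $h(j_E)$ --- against the arithmetic height data. We reprove none of this. What has to be checked on our side is only that David's corollary applies to an arbitrary Weierstrass model \eqref{weierstrass_equation} with integral $j$-invariant, and that the constant it provides is effective; both are immediate from the statement of \cite[Cor.~1.4]{David:1997Points}.
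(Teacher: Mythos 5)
Your proposal is correct and follows the same route as the paper: the paper gives no proof of this statement at all, simply quoting it as Corollaire 1.4 of \cite{David:1997Points}, exactly as you do, with any normalization discrepancies (e.g.\ $\log d$ versus $\log(2d)$, height conventions) absorbed into the effective constant $c_6$. Your additional narrative about the transcendence-theoretic proof and the role of $j_E\in\Ok$ is reasonable background but not required, since the cited corollary already applies verbatim to points of $E(k)$ with integral $j$-invariant.
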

	In an analogous way to Lemma \ref{lemma_NB_first_bound}, we can find a similar bound for $\mathcal{N}_E (B)$ in this case.
	\begin{lemma}
		\label{lemma_evr_good_red_NB}
		Let $E/k$ be an elliptic curve as in \eqref{weierstrass_equation} with good reduction at all finite places of $k$. Then
		\[ \mathcal{N}_E (B) \leq \abs{\Ektors} \left(c_7(d)\log B \right)^{\frac{1}{2} \rank E(k)}. \]
	\end{lemma}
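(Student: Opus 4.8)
The plan is to run the same argument as in the proof of Lemma \ref{lemma_NB_first_bound}, with David's inequality from Theorem \ref{lemma_david} playing the role that Petsche's inequality played there. The first point to observe is that everywhere good reduction forces $\nu_{\p}(j_E) \ge 0$ for every $\p \in M_k^0$, so $j_E \in \Ok$ and Theorem \ref{lemma_david} is applicable. Setting $J := \max\{h(j_E),1\} \ge 1$, I would then use $J + \log d \le J(1+\log d)$ (which holds because $J \ge 1$) to deduce that for every nontorsion $P \in E(k)$,
\[
\hat{h}(P) \ge \frac{c_6 J^3}{d^3(J+\log d)^2} \ge \frac{c_6 J}{d^3(1+\log d)^2} \ge \frac{c_6}{d^3(1+\log d)^2} =: \rho_2 > 0,
\]
a positive quantity depending only on $d$.

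As recalled at the end of Section \ref{Definitions_strategy}, this lower bound guarantees that $B(x,\sqrt{\rho_2}) \cap \Lambda_E = \{x\}$ for every $x \in \Lambda_E$. Since $\mathcal{N}_E(B) = \abs{\Ektors}\cdot\abs{\Lambda_E \cap \overline{B}(0,\sqrt{\log B})}$, applying Lemma \ref{balls} with $R = \sqrt{\log B}$ and $\rho = \sqrt{\rho_2}$ yields
\[
\mathcal{N}_E(B) \le \abs{\Ektors}\left(1 + \frac{2\sqrt{\log B}}{\sqrt{\rho_2}}\right)^{\rank E(k)}.
\]
Finally, the hypothesis $\log B \ge \max\{h(E),e\} \ge e > 1$ gives $1/\sqrt{\log B} \le 1$, hence $1 + 2\sqrt{\log B}/\sqrt{\rho_2} \le (1 + 2/\sqrt{\rho_2})\sqrt{\log B}$, and the claimed bound follows with $c_7(d) := (1 + 2/\sqrt{\rho_2})^2$, depending only on $d$.

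I do not expect any serious obstacle here. The only steps that need a line of justification are that everywhere good reduction makes $j_E$ integral (so that David's theorem applies) and that the function $t \mapsto t^3/(t+\log d)^2$ is bounded below by a positive constant on $[1,\infty)$, which is precisely what converts David's $j$-dependent estimate into the uniform constant $\rho_2$. The covering step is verbatim the one used in Lemma \ref{lemma_NB_first_bound}, so nothing new is required there.
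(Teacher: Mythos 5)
Your proposal is correct and follows essentially the same route as the paper: integrality of $j_E$ from everywhere good reduction, David's bound reduced to the uniform constant $\rho_2 = c_6/(d^3(1+\log d)^2)$ via $J\geq 1$, the covering Lemma \ref{balls} with $R=\sqrt{\log B}$, $\rho=\sqrt{\rho_2}$, and absorption of the additive $1$ using $\log B\geq e$. Your constant $c_7(d)=(1+2/\sqrt{\rho_2})^2$ is exactly the paper's $\bigl(1+2d^{3/2}(1+\log d)/c_6^{1/2}\bigr)^2$.
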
	
	\begin{proof}
		Because $E$ has everywhere good reduction, we know that $j_E \in \Ok$. Therefore, if $P \in E(k)$ is a nontorsion point, applying Theorem \ref{lemma_david} results in 
		\[ \hat{h}(P) \geq c_6 \frac{J^3}{d^3 (J+\log d)^2} \geq \frac{c_6}{d^3(1+\log d)^2} =: \rho_2 > 0,\]
		because $J = \max \{ h(j_E),1\} \geq 1$ by definition. Using the same strategy as in the proof of  Lemma \ref{lemma_NB_first_bound}, we apply Lemma \ref{balls} with $R = \sqrt{\log B}$ and $\rho = \sqrt{\rho_2}$ to get
		\begin{align*}
			\mathcal{N}_E (B) &\leq \abs{\Ektors} \left(1 + 2 \frac{d^{3/2}(1+\log d)}{c_6^{1/2}} \sqrt{\log B}\right)^{\rank E(k)} \\
				& \leq \abs{\Ektors} \left(c_7(d) \log B \right)^{\frac{1}{2} \rank E(k)},
		\end{align*}
		where $c_7(d) = \left(1 + \frac{2d^{3/2}(1 + \log d)}{c_6^{1/2}}\right)^2$, because $\log B \geq e > 1$.
	\end{proof}

	Next, we want to get an upper bound for the size of the torsion subgroup of $E(k)$ in terms of $k$ and $B$. For this we will need the following lemma.
	\begin{lemma}
		Let us put $c_8 = 6.2$.
		Let $N$ be an integer with $N\geq 16$. There exist $p, q \in \N$ prime such that $p \neq q$, $p, q \nmid N$ and $p, q \leq c_8 \log N$. 
		\label{lemma_primes}
	\end{lemma}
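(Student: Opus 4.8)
The plan is to count primes rather than work with any one of them individually. By the prime number theorem with explicit error terms — or, to keep everything effective and elementary, by explicit Chebyshev-type estimates such as those of Rosser and Schoenfeld — one has a lower bound of the form $\pi(x) \geq c\,x/\log x$ valid for all $x \geq 2$ with a concrete constant $c$. Applied with $x = c_8 \log N$, this gives a lower bound on the number of primes in $[2, c_8 \log N]$ that grows like $c_8 \log N / \log(c_8 \log N)$.

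The key step is then a purely arithmetic pigeonhole: the integer $N$ has at most $\log_2 N = \log N / \log 2$ distinct prime divisors, since each prime divisor is at least $2$. So if the number of primes up to $c_8 \log N$ strictly exceeds $\log N/\log 2 + 1$, then at least two of them fail to divide $N$, and these two distinct primes $p \neq q$ both satisfy $p, q \leq c_8 \log N$ and $p, q \nmid N$, which is exactly the claim. Thus it suffices to verify the inequality
\[
\pi(c_8 \log N) \;>\; \frac{\log N}{\log 2} + 1
\]
for all $N \geq 16$, with $c_8 = 6.2$.

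I would split the verification into two ranges. For $N$ large (say $N \geq N_0$ for some explicit threshold), plug the Chebyshev lower bound $\pi(x) \geq c\,x/\log x$ into the left-hand side; writing $x = c_8 \log N$, the inequality becomes $c\, c_8 \log N / \log(c_8 \log N) > \log N/\log 2 + 1$, which holds once $\log N$ is large enough because $c\, c_8/\log 2 > 1$ for the admissible constants (this is precisely why $c_8$ is chosen as large as $6.2$ rather than something smaller), the $\log(c_8\log N)$ in the denominator being eventually negligible. For the remaining finitely many $N$ with $16 \leq N < N_0$, one checks the inequality directly by computing $\pi(c_8 \log N)$ — a finite, routine computation. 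Since the left-hand side only jumps up at integer thresholds of $c_8 \log N$ while the right-hand side is increasing, this finite check reduces to finitely many cases.

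The main obstacle is purely bookkeeping: choosing the Chebyshev constant and the threshold $N_0$ so that the asymptotic argument and the finite check meet cleanly, and confirming that $c_8 = 6.2$ is large enough to make the leading-order comparison $c\,c_8/\log 2 > 1$ work with room to spare for the lower-order $\log\log$ correction. There is no conceptual difficulty — the lemma is a quantitative sieve-free statement — but the constant $6.2$ presumably reflects exactly the slack needed to push the elementary estimate through for all $N \geq 16$ without a long case analysis.
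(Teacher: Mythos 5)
Your overall framework (count primes up to $c_8\log N$, compare with the number of prime divisors of $N$, pigeonhole, Rosser--Schoenfeld for the lower bound on $\pi$) is the same as the paper's, but the pigeonhole step as you set it up contains a genuine error. You bound the number of distinct prime divisors of $N$ by the trivial estimate $\log N/\log 2$ and then claim that
\[
\pi(c_8\log N) \;>\; \frac{\log N}{\log 2}+1
\]
holds for large $N$ because ``$c\,c_8/\log 2>1$, the $\log(c_8\log N)$ in the denominator being eventually negligible.'' That denominator is not negligible: it tends to infinity, so the left-hand side is of order $\log N/\log\log N = o(\log N)$, while your right-hand side is of order $\log N$. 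Hence the displayed inequality \emph{fails} for all sufficiently large $N$ (e.g.\ for $\log N = 10^6$ one has $\pi(6.2\cdot 10^6)\approx 4\cdot 10^5$, far below $\log N/\log 2\approx 1.44\cdot 10^6$), and no choice of the constant $c_8$ can rescue an inequality of the shape $\pi(c_8\log N)>\log N/\log 2$. The finite-check-plus-asymptotics plan therefore cannot be completed.

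The lemma is still true because the trivial bound on the number of prime factors is far from sharp in the relevant worst case: if $\omega(N)$ denotes the number of distinct prime divisors, the extremal $N$ are primorials and one has $\omega(N)=O(\log N/\log\log N)$, which matches the order of $\pi(c_8\log N)$. This is exactly how the paper proceeds: it invokes Robin's explicit estimate $\omega(n)\leq 1.3841\,\log n/\log\log n$ for $n\geq 3$, shows $\omega(N)+1 < 1.8\,\log N/\log\log N$ for $N\geq 16$, and on the other side uses Rosser--Schoenfeld (valid since $c_8\log N\geq 17$ when $N\geq 16$) together with $\log(c_8\log N)<(1+\log c_8)\log\log N$ and $c_8/(1+\log c_8)>2$ to get $\pi(c_8\log N)>2\,\log N/\log\log N$. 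Replacing your bound $\omega(N)\leq\log N/\log 2$ by such an explicit $O(\log N/\log\log N)$ bound (either Robin's theorem or a direct primorial argument) is the missing ingredient; the rest of your argument then goes through essentially as in the paper.
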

	\begin{proof}
		If $\pi(x) = \abs{\left\{ p \text{ prime} : p \leq x \right\}}$ and $\omega (x) = \abs{ \left\{ p \text{ prime} : p \mid x \right\}} $, we see that proving the lemma amounts to finding $c_8 >0$ such that $\pi (c_8 \log N) \geq \omega(N)+2$. Because both sides are integers it is actually enough to have $ \pi (c_8 \log N) > \omega(N)+1 $.
		
		A standard result (3.5) of Rosser and Schoenfeld \cite{Rosser:1962Approximate} tells us that $\pi(x) > \frac{x}{\log (x)}$ for all $x \geq 17$. For all $N \geq 16$ we have that $17/\log N \leq 17/\log16 <6.2$, so by taking $c_8:=6.2$ we get $\pi(c_8 \log N) > c_8 \frac{\log N}{\log(c_8 \log N)}$. Furthermore, $N \geq 16 > e^e$ implies that $\log(c_8 \log N) < (\log c_8 +1)\log \log N$. By evaluating for $c_8 = 6.2$, we see that $c_8/(1+\log c_8)>2$, which gives us $\pi(c_8 \log N) > 2 \frac{\log N}{\log \log N}$.
		
		On the other hand, by Th\'{e}or\`{e}me 11 of Robin \cite{Robin:1983Estimation}, we know that $\omega(n) \leq c_9  \frac{\log n}{\log \log n}$ for all $n \geq 3$, where $c_9 = 	1.3841$. We can therefore estimate $\omega(N)+1 < (1.4+1/e) \frac{\log N}{\log \log N} <1.8 \frac{\log N}{\log \log N}$, because $ \log N/ \log \log N \geq e$ for all $N \geq 16$.
		
		Finally, this gives us
		\begin{align*}
			\pi(c_8 \log N) >2 \frac{\log N}{\log \log N} > \omega(N)+1,
		\end{align*}
		which proves the lemma.
		\end{proof}
	In Lemmas \ref{lemma_NB_first_bound} and \ref{lemma_evr_good_red_NB} we have derived upper bounds for $\mathcal{N}_E(B)$ that contain the factor $\abs{\Ektors}$. In the next lemma we bound the torsion from above in terms of $k$ and $B$.
	\begin{lemma}
		\label{torsion_bound}
		Let $E/k$ be an elliptic curve given by \eqref{weierstrass_equation} and $B \geq e^{\max \{h(E),e\}}$ as before. Then
		\[ \abs{ \Ektors } \leq c_{10}(k) (\log B)^{2 d}. \]
	\end{lemma}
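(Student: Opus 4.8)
\emph{Sketch of proof.} The plan is to bound $\abs{\Ektors}$ by reducing $E$ (working, say, with the integral model $E'\cong E$ of Lemma \ref{model}, which changes none of the relevant invariants) modulo two carefully chosen small rational primes of good reduction and applying the Hasse bound. First I would recall two standard facts about elliptic curves over local fields (see \cite{Silverman:2009The-arithmetic}, Chapters V and VII). If $\p\in M_k^0$ is a prime at which $E$ has good reduction, lying above the rational prime $p$, then the reduction map $\Ektors\to\tilde E(\mathbb{F}_\p)$ is injective on the prime-to-$p$ part of $\Ektors$, since the kernel of reduction is (isomorphic to) a formal group and hence has no torsion of order prime to $p$. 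Moreover, by Hasse's inequality $\abs{\tilde E(\mathbb{F}_\p)}\le(\sqrt{\NkQideal(\p)}+1)^2\le 4p^d$, using $\NkQideal(\p)=p^{f(\p)}\le p^d$.

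Next I would locate the two auxiliary primes. A rational prime $p$ not dividing $\NkQideal(\D_{E/k})$ cannot lie below any prime of bad reduction of $E$: a bad prime $\p$ divides $\D_{E/k}$, so $p\mid\NkQideal(\p)\mid\NkQideal(\D_{E/k})$. Hence $E$ has good reduction at every prime of $\Ok$ lying above such $p$. I set $N:=16\,\NkQideal(\D_{E/k})\ge 16$ and apply Lemma \ref{lemma_primes} to obtain distinct primes $p\ne q$ with $p,q\nmid N$ (so in particular $p,q\nmid\NkQideal(\D_{E/k})$) and $p,q\le c_8\log N=6.2\log N$. Fixing primes $\p\mid p$ and $\q\mid q$ of $\Ok$, both are primes of good reduction for $E$.

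Then I would combine the two reductions to cover \emph{all} of $\Ektors$, including its $p$-primary part. Writing $T:=\Ektors=T_p\times T_{p'}$ as the product of its $p$-Sylow subgroup and its prime-to-$p$ part, injectivity at $\p$ gives $\abs{T_{p'}}\le\abs{\tilde E(\mathbb{F}_\p)}\le 4p^d$, while $T_p$ is contained in the prime-to-$q$ part of $T$ (as $p\ne q$), so injectivity at $\q$ gives $\abs{T_p}\le\abs{\tilde E(\mathbb{F}_\q)}\le 4q^d$. Hence $\abs{\Ektors}\le 16(pq)^d\le 16(6.2\log N)^{2d}$. By \eqref{norm_bound_logB}, $\log N=\log 16+\log\NkQideal(\D_{E/k})\le\log 16+c_1(k)\log B$, and since $\log B\ge e$ this is at most $\bigl(\tfrac{\log 16}{e}+c_1(k)\bigr)\log B$; substituting gives the claim with $c_{10}(k)=16\bigl(6.2\bigl(\tfrac{\log 16}{e}+c_1(k)\bigr)\bigr)^{2d}$.

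The one genuine subtlety, and the reason two primes rather than one are needed, is the $p$-primary torsion: a single good prime $\p$ controls only the prime-to-$p$ part of $\Ektors$ unless one imposes a ramification hypothesis such as $e(\p/p)<p-1$, and introducing a second prime $q\ne p$ removes this issue at the cost of the exponent $2d$ instead of $d$. Everything else is a direct application of Lemma \ref{lemma_primes}, the Hasse bound, and \eqref{norm_bound_logB}.
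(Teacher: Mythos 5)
Your proof is correct and follows essentially the same route as the paper: two auxiliary rational primes from Lemma \ref{lemma_primes}, injectivity of reduction (the paper cites the injection of $\Ektors$ into the product of the two reductions, which is exactly your Sylow-decomposition argument spelled out), a pointwise bound on the reduced curves, and \eqref{norm_bound_logB}. The only cosmetic differences are that the paper applies Lemma \ref{lemma_primes} to $\abs{\NkQ(\Delta_{E'})}\ge 16$ rather than to $16\,\NkQideal(\D_{E/k})$, and uses the trivial bound $2p^{f(\p)}+1$ instead of Hasse.
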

	\begin{proof}
		For a prime ideal $\p$ of $\Ok$ let $\tilde{E}_{\p}$ denote the reduction modulo $\p$ of $E$ and $\tilde{k}_{\p}$ the residue field $\Ok / \p$.
		
		If $\p$ and $\q$ are two prime ideals of $\Ok$ such that $E$ has good reduction at both of them and if $p \neq q$, where $p$ and $q$ are the unique primes of $\Z$ lying above $\p$ and $\mathfrak{q}$ respectively, then $\Ektors$ injects into $\tilde{E}_{\p}(\tilde{k}_{\p}) \times \tilde{E}_{\mathfrak{q}}(\tilde{k}_{\mathfrak{q}})$ (see, for example, remark after Theorem C.1.4. in \cite{Hindry:2000Diophantine}). The size of $ \tilde{E}_{\p}(\tilde{k}_{\p}) $ is trivially bounded by $2 \abs{\tilde{k}_{\p}}+1 = 2 p^{f(\p) }+ 1$, where $f(\p) \leq d$ is the inertia degree of $\p$ over $p$. So, in particular we have
		\begin{align}
		\label{tors_bound_eq1}
		\abs{\Ektors } \leq (2 \abs{ \tilde{k}_{\p}}  +1)\cdot (2 \abs{\tilde{k}_{\mathfrak{q}}} +1) \leq (2p^d+1)(2q^d+1).
		\end{align}
		Hence for proving the lemma it suffices to find two small enough distinct primes $p,q \in \Z_{\geq 0}$ such that there exist prime ideals of  $\Ok$ above both that do not divide $\D_{E/k}$. Because $a', b',c' \in \Ok$, from Lemma \ref{model} and \eqref{norm_bound_logB} it follows that it suffices to find primes $p$ and $q$ not dividing $\NkQ (\Delta_{E'})$ and satisfying $p,q \leq C \log \abs{\NkQ (\Delta_{E'})}$ for some $C>0$.
		
		Because we have $a', b',c' \in \Ok$ and $\Delta_{E'} = 16(a'^2 b'^2 - 4 a'^3 c' - 4 b'^3 + 18 a' b' c' - 27 c'^2) \neq 0$, we know that $\abs{\NkQ (a'^2 b'^2 - 4 a'^3 c' - 4 b'^3 + 18 a' b' c' - 27 c'^2)} \geq 1$ and consequently $\abs{\NkQ (\Delta_{E'})} \geq 16$. So we can apply Lemma \ref{lemma_primes} to find primes $p$ and $q$ with $p \neq q$, both not dividing $\abs{\NkQ (\Delta_{E'})}$ and $p, q \leq c_8 \log \abs{\NkQ (\Delta_{E'})} \leq c_8 c_1(k) \log B$. As we argued before, these satisfy the conditions needed to apply the bound \eqref{tors_bound_eq1} and because $\log B \geq e$ this results in
		\begin{align*}
			\abs{\Ektors} \leq (2 (c_8 c_1(k)\log B)^d +1)^2 \leq c_{10}(k) (\log B)^{2d},
		\end{align*}
		where $c_{10}(k):= (2 (c_7 c_1(k))^d + 1/e^d)^2$.
	\end{proof}	
	For positive real numbers $C_1, C_2$ and $B \geq e^e$ we have
	\begin{equation}
		C_1 (\log B)^{C_2} \leq B^{C_3/ \log \log B}, \text{ for } C_3 \geq \frac{1}{e}\log C_1 + \frac{4 }{e^2} C_2,
		\label{bound_logB}
	\end{equation}
	which follows from $\log B/\log \log B \geq e $ and $ \log B / (\log \log B)^2 \geq e^2/4$.
	So the previous lemma gives us
	\begin{align}
		\label{tors_bound_final}
		\abs{\Ektors} \leq B^{c_{11}(k)/ \log \log B}.
	\end{align}
	
	All of this is enough to show that if we assume that the rank $r$ of $E(k)$ is smaller than some constant, then Theorem \ref{main_thm_can} follows.
	
	\begin{prop}
		\label{lemma_bounded_rank}
		Let $c_{12}(k)>0$ be a real number. Let $E/k$ be an elliptic curve given by \eqref{weierstrass_equation} such that $\rank E(k)$ is at most $c_{12}(k)$. Then $\mathcal{N}_E(B) \leq B^{c_{13}(k)/\log \log B}$
	\end{prop}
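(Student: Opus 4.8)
The plan is to split into the two cases already treated in Lemmas \ref{lemma_NB_first_bound} and \ref{lemma_evr_good_red_NB}, according to whether $E$ has a place of bad reduction over $k$ or everywhere good reduction, and in each case absorb the bounded rank into a constant exponent and handle the torsion factor via \eqref{tors_bound_final}. Since $c_4(k), c_7(d), c_{11}(k)$ are all effective and depend only on $k$ (and implicitly $\ell$), so will $c_{13}(k)$.

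First I would treat the case where $E$ has at least one place of bad reduction over $k$. Then $\NkQideal(\F_{E/k}) \geq 2$, so $\log \NkQideal(\F_{E/k}) \geq \log 2$ and hence $\frac{\log B}{\log \NkQideal(\F_{E/k})} \leq \frac{\log B}{\log 2}$. As noted in the proof of Lemma \ref{lemma_NB_first_bound}, \eqref{norm_bound_logB} gives $c_1(k)\frac{\log B}{\log \NkQideal(\F_{E/k})} \geq 1$, and since $c_4(k) \geq c_1(k)$ by its definition, the base $c_4(k)\frac{\log B}{\log 2}$ appearing in the bound is $\geq 1$; this lets me replace $\rank E(k)$ by its upper bound $c_{12}(k)$ in the exponent, yielding
\[
	\mathcal{N}_E(B) \leq \abs{\Ektors} \left( \frac{c_4(k)}{\log 2} \right)^{\frac{7}{2} c_{12}(k)} (\log B)^{\frac{7}{2} c_{12}(k)}.
\]
Applying \eqref{bound_logB} with $C_1 = (c_4(k)/\log 2)^{\frac{7}{2} c_{12}(k)}$ and $C_2 = \frac{7}{2} c_{12}(k)$ converts the last two factors into $B^{c/\log\log B}$ for an explicit $c = c(k)$, and multiplying by \eqref{tors_bound_final} gives $\mathcal{N}_E(B) \leq B^{(c + c_{11}(k))/\log\log B}$.

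Second, in the everywhere good reduction case I would start from Lemma \ref{lemma_evr_good_red_NB}: since $c_7(d)\log B \geq 1$ (because $\log B \geq e$) and $\rank E(k) \leq c_{12}(k)$, we get $\mathcal{N}_E(B) \leq \abs{\Ektors}\, (c_7(d))^{\frac{1}{2} c_{12}(k)} (\log B)^{\frac{1}{2} c_{12}(k)}$; invoking \eqref{bound_logB} and \eqref{tors_bound_final} exactly as above produces a bound of the same shape. Taking $c_{13}(k)$ to be the larger of the two constants obtained covers both cases. I do not expect a genuine obstacle here — the proposition is essentially bookkeeping — the only point needing a little care being that the denominator $\log \NkQideal(\F_{E/k})$ in Lemma \ref{lemma_NB_first_bound} must be bounded away from $0$, which is why the everywhere-good-reduction case is peeled off and handled via David's bound, while $\NkQideal(\F_{E/k}) \geq 2$ suffices otherwise.
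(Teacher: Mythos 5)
Your proposal is correct and follows essentially the same route as the paper: the same split into the bad-reduction case (via Lemma \ref{lemma_NB_first_bound} with $\log \NkQideal(\F_{E/k}) \geq \log 2$) and the everywhere-good-reduction case (via Lemma \ref{lemma_evr_good_red_NB}), with the torsion absorbed through \eqref{tors_bound_final} and the polynomial-in-$\log B$ factor converted using \eqref{bound_logB}. Your extra check that the base exceeds $1$ before enlarging the exponent to $c_{12}(k)$ is a sound point the paper leaves implicit.
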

	\begin{proof}
		Let us first look at the case when $E$ does not have everywhere good reduction. This means that $\NkQideal (\F_{E/k}) \geq 2$, so  Lemma \ref{lemma_NB_first_bound} and \eqref{tors_bound_final} together imply 
		\begin{align*}
			\mathcal{N}_E (B) &\leq B^{c_{11}(k)/ \log \log B} \left( c_4(k)  \frac{\log B}{\log \NkQideal(\F_{E/k})} \right)^{\frac{7}{2} c_{12}(k)} \\
			&  \leq B^{c_{11}(k)/ \log \log B} \left( \frac{c_4(k)}{\log 2} \log B\right)^{\frac{7}{2} c_{12}(k)} \label{eq_bounded_rank_1}\\
			& \leq B^{c_{13}(k) / \log \log B}.
		\end{align*}
		Similarly, in the case of everywhere good reduction using Lemma \ref{lemma_evr_good_red_NB} we get
		\begin{equation*}
			\mathcal{N}_E (B) \leq  B^{c_{11}(k)/ \log \log B} \left(c_7(d)\log B \right)^{\frac{1}{2} c_{12}(k)} \leq    B^{\tilde{c}_{13}(k) / \log \log B}. \qedhere
		\end{equation*}
	\end{proof}
	
	\section{Bounding the rank} \label{Bounding_rank}
	Let $E/k$ be an elliptic curve as in \eqref{weierstrass_equation}.
	In this section we need to make further assumptions about $E$, to get a suitable bound on the rank. Therefore, we assume that $E$ is as in the statement of Theorem \ref{main_thm_can}, i.e. it has at least one point of exact order $\ell$ defined over $k$, where $\ell$ is a fixed prime integer. We will look at the field extension $K_\ell := k(E[\ell])$, so $E$ will have full $\ell$-torsion over $K_\ell$. First we analyze the extensions $K_\ell \supset k$. We start out with the simplest example, when $\ell = 2$.
	\begin{remark}
		\label{remark_2torsion}
		Let $E$ be as in Theorem \ref{main_thm_can} for $\ell=2$, so $E$ has at least  one  (nontrivial) $k$-rational point of order two. This means that there exists $x_0 \in k$ such that $F(x_0)=0$. Therefore, we have $F(x) = (x-x_0)F_1(x)$, for some $F_1 \in k[x]$ of degree 2. The extension $K_2 = k(E[2])$ is generated by roots of the polynomial $F$. But, as we have assumed $x_0 \in k$, it is generated over $k$ by roots of $F_1$, so $[K_2:k] \leq 2$. Thus, we have either $K_2=k$ or $[K_2:k] = 2$, in which case $\Gal(K_2/k) \cong \Z / 2\Z$.
	\end{remark}
	Let $\xi_\ell$ denote a primitive $\ell$-th root of unity. For $\ell = 2$ we trivially have that $\xi_\ell \in k$. However, for $\ell \geq 3$ we will need to look at $k' := k(\xi_\ell)$. From the properties of the Weil pairing we know that $K_\ell$ contains $\ell$-th roots of unity, so we have $k \subset k' \subset K_\ell$. First we investigate the extension $k' \subset K_\ell$; our aim is to show that this Galois extension is either trivial or cyclic of order $\ell$.
	
	We first do this for $\ell = 3$, in which case we can show this by finding a concrete family of curves with a $k$-rational point of exact order 3. In this case we have $k' = k(\xi_3) = k(\sqrt{-3})$.
	\begin{lemma}
		\label{lemma_3torsion}
		Let $E$ be an elliptic curve given by \eqref{weierstrass_equation} defined over a number field $k$. Suppose $E(k) \cap E[3] \neq \{O\}$ and that $k$ contains $\sqrt{-3}$. Then $k(E[3])/k$ is a Galois extension with Galois group either trivial or cyclic of order 3.
	\end{lemma}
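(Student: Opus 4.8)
The plan is to determine $\Gal(K_3/k)$, where $K_3 := k(E[3])$, by studying its action on $E[3] \cong (\Z/3\Z)^2$ and showing this action embeds $\Gal(K_3/k)$ into $\Z/3\Z$.

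\emph{Normalization (the concrete family).} Let $P_0 = (x_0,y_0) \in E(k)$ have exact order $3$. Since a point of order $3$ is not $2$-torsion, $y_0 \neq 0$. Replacing $E$ by its $k$-isomorphic translate under $x \mapsto x - x_0$ we may assume $P_0 = (0,y_0)$, so $c = F(0) = y_0^2$. Writing down the tangent line to $E$ at $P_0$ and imposing that $P_0$ be a flex (equivalently, $2P_0 = -P_0$) yields, after a short substitution, the single relation $b^2 = 4ac$. Thus every such $(E,P_0)$ is, up to $k$-isomorphism, a member of the family $y^2 = x^3 + ax^2 + bx + c$ with $b^2 = 4ac$, with $P_0 = (0,\sqrt{c})$. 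For such a model the $3$-division polynomial $\psi_3(x) = 3x^4 + 4ax^3 + 6bx^2 + 12cx + (4ac - b^2)$ has vanishing constant term, so $\psi_3(x) = 3x\,g(x)$ with $g(x) = x^3 + \tfrac{4}{3}ax^2 + 2bx + 4c \in k[x]$; the root $x=0$ gives $\pm P_0$ and the three roots of $g$ give the $x$-coordinates of the other six points of order $3$. In particular $K_3/k$ is a normal (hence Galois) extension, being generated by the coordinates of the $\Gal(\bar k/k)$-stable set $E[3]$.

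\emph{Galois action.} Fix a basis $\{P_0, Q\}$ of $E[3]$ and let $\rho_{E,3} \colon \Gal(\bar k/k) \to \mathrm{GL}_2(\mathbb{F}_3)$ be the associated representation. Since $P_0$ is $k$-rational, $\rho_{E,3}(\sigma) = \left(\begin{smallmatrix} 1 & \ast \\ 0 & d_\sigma \end{smallmatrix}\right)$ for every $\sigma$. By $\Gal$-equivariance of the Weil pairing, $\det \rho_{E,3}(\sigma) = d_\sigma$ equals the value at $\sigma$ of the mod-$3$ cyclotomic character, i.e.\ the action of $\sigma$ on $\xi_3$. The hypothesis $\sqrt{-3} \in k$ is equivalent to $\xi_3 = \tfrac{-1+\sqrt{-3}}{2} \in k$, so that character is trivial and $d_\sigma = 1$ for all $\sigma$. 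Hence $\rho_{E,3}(\sigma) = \left(\begin{smallmatrix} 1 & m_\sigma \\ 0 & 1 \end{smallmatrix}\right)$, and $\sigma \mapsto m_\sigma$ defines a homomorphism $\Gal(K_3/k) \to \mathbb{F}_3$ which is injective (if $m_\sigma = 0$ then $\sigma$ fixes $P_0$ and $Q$, hence all of $E[3]$, hence $K_3$). Therefore $\Gal(K_3/k)$ is isomorphic to a subgroup of $\Z/3\Z$, i.e.\ it is trivial or cyclic of order $3$.

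\emph{Main obstacle.} The only non-formal ingredient is the determinant identity $\det\rho_{E,3} = \chi_{\mathrm{cyc},3}$ coming from the Weil pairing, together with the elementary equivalence $\sqrt{-3}\in k \Leftrightarrow \xi_3 \in k$; everything else is bookkeeping with the explicit family. If one prefers to remain inside the concrete family and avoid the Weil pairing, an alternative is to compute $\Disc(g)$, show it lies in $-3\cdot(k^\times)^2$ so that the splitting field of $g$ has Galois group contained in $A_3 \cong \Z/3\Z$, and then check directly that adjoining a single root of $g$ already produces the corresponding $y$-coordinate, so that $K_3$ equals the splitting field of $g$; this route is more computational but self-contained.
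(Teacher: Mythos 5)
Your proof is correct, but it takes a different route from the paper's proof of this particular lemma: you run the Weil-pairing/Galois-representation argument (fix a basis $\{P_0,Q\}$ with $P_0$ rational, observe that $\det\rho_{E,3}$ is the mod-$3$ cyclotomic character, which is trivial since $\sqrt{-3}\in k$ forces $\xi_3\in k$, and conclude that the image lies in the order-$3$ unipotent subgroup), whereas the paper's proof of Lemma \ref{lemma_3torsion} is entirely explicit: it writes $F(x)=(x-x_0)^3+(A_1x+A_2)^2$ using the versal family of Bekker--Zarhin, factors $\psi_3=(x-x_0)\Psi(x)$, checks that $\Disc(\Psi)$ is $-3$ times a square (hence a square in $k$), and verifies by a direct identity that adjoining one root of the cubic $\Psi$ already yields the $y$-coordinates, so $K_3$ is the splitting field of a cubic with square discriminant. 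Your argument is in fact essentially the paper's own proof of the more general Lemma \ref{lemma_ell_torsion}, which the authors state precisely as the generalization of Remark \ref{remark_2torsion} and Lemma \ref{lemma_3torsion} to arbitrary primes $\ell$; so what your route buys is uniformity in $\ell$ and conceptual brevity, while the paper's computational route for $\ell=3$ is self-contained, avoids the Weil pairing, and exhibits $K_3$ concretely as $k(\alpha)$ for a root $\alpha$ of an explicit cubic. Two small remarks: your normalization paragraph (translating $P_0$ to $(0,y_0)$ and deriving $b^2=4ac$) is not needed for the main argument, since the Galoisness of $k(E[3])/k$ already follows from the $\Gal(\bar k/k)$-stability of $E[3]$, exactly as you note at the end of that paragraph; and the ``alternative'' you sketch in your final paragraph (discriminant in $-3\cdot(k^\times)^2$, plus checking that one root of the cubic generates the $y$-coordinates) is precisely the paper's actual proof.
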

	\begin{proof}
		As we have assumed $E(k) \cap E[3] \neq \{O\}$, there is a $P = (x_0,y_0) \in E(k) \cap E[3]$ for some $x_0,y_0 \in k$. Then we know due to Corollary 1 in \cite{Bekker:2021Versal} that there exist unique $A_1,A_2 \in k$ such that
		\begin{equation}
			\label{eq_weirstrass_3_torsion}
			F(x) = (x-x_0)^3 + (A_1 x + A_2)^2
		\end{equation}
		and $y_0 = A_1 x_0 +A_2$.  From this we have that the $3$-division polynomial of $E$ is $\psi_3(x) = (x-x_0) \Psi(x)$, where $\Psi \in k[x]$ is given by
		\begin{equation*}
			\Psi (x)= 3x^3 + (4A_1^2 - 9x_0)x^2 + (4A_1^2x_0 + 12A_1 A_2 + 9x_0^2)x + 4A_1^2 x_0^2 + 12A_1 A_2 x_0 - 3x_0^3 + 12A_2^2.
		\end{equation*}
		The discriminant of $\Psi$ is 
		\begin{align*}
			\Disc(\Psi) &= -48 (A_1 x_0 +A_2)^2 (-4 A_1^3 + 27 A_1 x_0 + 27 A_2)^2 \\
			&= \left(4\sqrt{-3}(A_1 x_0 +A_2)(-4 A_1^3 + 27 A_1 x_0 + 27 A_2)\right)^2.
		\end{align*}
		Since $\sqrt{-3} \in k$, we have $k(\Disc(\Psi)) = k(\sqrt{-3}) = k$.
		Let $L \supset k = k(\sqrt{-3})$ be the splitting field of $\Psi$. From Theorem 2.8 in \cite{Conrad:2010Galois} it follows that if $\alpha$ is a root of $\Psi$, then $L = k(\sqrt{\Disc(\Psi)}, \alpha) = k(\alpha)$. We want to show that actually $L = K_3 = k(E[3])$. 
		The field $L$ is generated by roots of $\Psi$, which are also roots of $\psi_3$, so $L \subset K_3$.
		If $Q=(x_1,y_1) \in E[3] \setminus \{O\}$, then $\psi_3(x_1) = 0$. So either $x_1= x_0 \in k \subset L$, or $\Psi(x_1)=0$. Thus, we have $x_1 \in L$. But $y_1^2 = F(x_1)$ with $F$ as in \eqref{eq_weirstrass_3_torsion}. Because $\Psi(x_1) = 0$ we have
		\begin{align*}
			-3y_1^2 &= -3 (x_1-x_0)^3 - 3(A_1 x_1+A_2)^2 + \Psi(x_1) = (A_1x _1+3A_2+2A_1 x_0)^2.
		\end{align*}
		Therefore, $y_1= \pm (A_1 x_1+3A_2 +2A_1 x_0)/\sqrt{-3} \in k(x_1) \subset L$. This shows that $K_3 = L = k(\alpha)$ for any root $\alpha$ of $\Psi$.
		
		If $\Psi$ is reducible over $k$, then since $\deg (\Psi) = 3$, there is a $\alpha \in k$ such that $\Psi (\alpha) = 0$. With the above, this means that $K_3 = k$ in this case.
		On the other hand, if $\Psi$ is irreducible over $k$, then $[k(\alpha):k]=3$ for any root $\alpha$ of $\Psi$. So $[K_3:k]=3$ and therefore $\Gal(K_3/k) \cong \Z / 3\Z$.
	\end{proof}
	In the next lemma we generalize the results of Remark \ref{remark_2torsion} and Lemma \ref{lemma_3torsion} to arbitrary prime $\ell \in \Z$, by using properties of the Weil pairing and Galois representations.
	\begin{lemma}
		\label{lemma_ell_torsion}
		Let $E$ be an elliptic curve given by \eqref{weierstrass_equation} defined over a number field $k$ and let $\ell$ be a prime integer. Suppose $E(k) \cap E[\ell] \neq \{O\}$ and that $k$ contains a primitive root of unity of order $\ell$. Then $k(E[\ell])/k$ is a Galois extension with Galois group either trivial or cyclic of order $\ell$.
	\end{lemma}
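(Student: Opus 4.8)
The plan is to translate the statement into one about the image of the mod-$\ell$ Galois representation of $E$. Write $G_k = \Gal(\overline{k}/k)$ and let $\rho_\ell \colon G_k \to \mathrm{Aut}(E[\ell])$ be the natural action on the $\ell$-torsion; a choice of $\Z/\ell\Z$-basis of $E[\ell]$ identifies $\mathrm{Aut}(E[\ell])$ with $\mathrm{GL}_2(\Z/\ell\Z)$. First I would note that an element of $G_k$ fixes $K_\ell = k(E[\ell])$ pointwise precisely when it fixes every point of $E[\ell]$, i.e.\ $\Gal(\overline{k}/K_\ell) = \ker\rho_\ell$; hence $K_\ell/k$ is Galois (normality is also clear, since any $k$-embedding $K_\ell \hookrightarrow \overline{k}$ carries $E[\ell]$ to itself), with $\Gal(K_\ell/k) \cong \mathrm{Im}\,\rho_\ell$. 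It therefore suffices to prove that $\mathrm{Im}\,\rho_\ell$ is trivial or cyclic of order $\ell$.

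Next I would exploit the two hypotheses. By assumption there is a point $P \in (E(k)\cap E[\ell]) \setminus \{O\}$, and it has exact order $\ell$ because $\ell$ is prime; complete it to a basis $\{P,Q\}$ of $E[\ell]$. Since $P$ is $k$-rational, every $\sigma \in G_k$ satisfies $\sigma(P)=P$, so in this basis $\rho_\ell(\sigma)$ is upper triangular of the form $\left(\begin{smallmatrix} 1 & b_\sigma \\ 0 & d_\sigma \end{smallmatrix}\right)$. Now the Weil pairing gives $\det\rho_\ell = \chi_\ell$, the mod-$\ell$ cyclotomic character, which is trivial because $\xi_\ell \in k$; concretely, $e_\ell(\sigma P,\sigma Q) = \sigma\bigl(e_\ell(P,Q)\bigr)$ together with $e_\ell(P,Q)$ being a primitive $\ell$-th root of unity fixed by $\sigma$ forces $d_\sigma \equiv 1 \pmod{\ell}$. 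Hence $\mathrm{Im}\,\rho_\ell$ lies in the group $U = \{ \left(\begin{smallmatrix} 1 & t \\ 0 & 1\end{smallmatrix}\right) : t \in \Z/\ell\Z \}$ of unipotent upper-triangular matrices.

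Finally, $U \cong (\Z/\ell\Z,+)$ is cyclic of order $\ell$, so, $\ell$ being prime, its subgroup $\mathrm{Im}\,\rho_\ell$ is either trivial, whence $K_\ell = k$, or all of $U$, whence $\Gal(K_\ell/k) \cong \Z/\ell\Z$. This gives the desired dichotomy and recovers Remark \ref{remark_2torsion} for $\ell=2$ and Lemma \ref{lemma_3torsion} for $\ell=3$. I do not expect a serious obstacle here: the only steps demanding care are the identity $\det\rho_\ell = \chi_\ell$ furnished by the Weil pairing (used to pin down $d_\sigma = 1$) and the elementary fact that a subgroup of $\Z/\ell\Z$ with $\ell$ prime is trivial or the whole group.
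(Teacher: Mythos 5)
Your proposal is correct and follows essentially the same route as the paper: choose a basis $\{P,Q\}$ of $E[\ell]$ with $P$ the $k$-rational point, use $k$-rationality of $P$ together with the Galois-equivariance of the Weil pairing and $\mu_\ell \subset k$ to force the image of the mod-$\ell$ representation into the order-$\ell$ unipotent group, and conclude since a subgroup of a group of prime order is trivial or everything. The only cosmetic difference is that you phrase the last pairing step via $\det\rho_\ell = \chi_\ell$ being trivial, whereas the paper computes $e_\ell(P,\alpha_\sigma P + \beta_\sigma Q) = e_\ell(P,Q)^{\beta_\sigma}$ directly; these are the same argument.
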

	\begin{proof}
		Since $E(k) \cap E[\ell] \neq \{O\}$, we have a point $P \in E(k) \cap E[\ell]$ with $P \neq O$. Therefore, we can find a $\Z/\ell\Z$-basis $\{P,Q\}$ for $E[\ell] \cong (\Z/\ell \Z)^2$, for some $O \neq Q \in E[\ell]$. 
		For each $\sigma \in \Gal (k(E[\ell])/k) $, since $P \in E(k)$, we have that $\sigma (P,Q) = (\sigma(P), \sigma(Q)) = (P, \alpha_\sigma P + \beta_\sigma Q)$ for some $\alpha_\sigma, \beta_\sigma \in \Z/ \ell \Z$.
		Let $\mu_\ell$ denote the group of $\ell$-th roots of unity and $e_\ell : E[\ell] \times E[\ell] \rightarrow \mu_\ell$ the Weil pairing. By the assumption $\mu_\ell \subset k$ and so $\sigma(e_\ell (P,Q)) = e_\ell (P,Q)$. From the standard properties of the Weil pairing (Proposition III.8.1. in \cite{Silverman:2009The-arithmetic}), it follows that
		\begin{align*}
			e_\ell(P,Q) &= \sigma(e_\ell (P,Q)) = e_\ell (\sigma(P), \sigma(Q)) \\
			&= e_\ell (P, \alpha_\sigma P + \beta_\sigma Q) \\
			& = e_\ell(P, \alpha_\sigma P) e_\ell (P,\beta_\sigma Q) = 1 \cdot e_\ell (P,Q)^{\beta_\sigma}.
		\end{align*}
		The Weil pairing is bilinear, alternating and nondegenerate and $\{P,Q\} $ is a basis, thus $e_\ell (P,Q) \neq 1$. So the above implies $\beta_\sigma = 1$ for every $\sigma \in \Gal (k(E[\ell])/k)$.
		Therefore, we can consider the Galois representation $\rho: \Gal (k(E[\ell])) \rightarrow \mathrm{GL}_2 (\Z / \ell \Z)$ given by
		\begin{align*}
			\rho(\sigma) =
			\begin{pmatrix}
				1 & \alpha_\sigma \\
				0 & 1
			\end{pmatrix} .
		\end{align*}
		This gives us $\Gal (k(E[\ell])) \cong \mathrm{Im} (\rho)$, which is a subgroup of
		\begin{align}
			H_\ell :=
			\left\{
			\begin{pmatrix}
				1 & \star \\
				0 & 1
			\end{pmatrix}
			\in \mathrm{GL}_2(\Z/\ell \Z)
			\right\}.
		\end{align}
		As $\abs{H_\ell} = \ell$ and $\ell$ is prime, any subgroup of $H_\ell$ must be cyclic of order either 1 or $\ell$.
	\end{proof}
	
	Because $E$ has full $\ell$-torsion over $K_\ell=k(E[\ell])$, we can make use of the following lemma, which appears as Exercise 8.1 in \cite{Silverman:2009The-arithmetic} and proof of which can be found as Theorem 2 in \cite{Ooe:1989On-the-Mordell-Weil}.
	\begin{lemma}
		\label{rank_bound_prop}
		Let $E/K$ be an elliptic curve, let $\ell \geq 2$ be an integer, let $Cl_K$ be the ideal class group of $K$, and let $S_\ell \subset M_K$ be the set consisting of places of bad reduction of $E/K$, finite places of $K$ above $\ell$, and all infinite places of $K$, that is:
		\begin{equation*}
			S_\ell = \left\{ v \in M_K ^ 0 : E \text{ has bad reduction at } v \right\} \cup \left\{ v \in M_K ^0 : v(\ell) \neq 0\right\} \cup M_K^{\infty}.
		\end{equation*}
		Assume that $E[\ell] \subset E(K)$. Then
		\begin{equation*}
		\rank _{\Z / \ell \Z} E(K)/\ell E(K) \leq 2 \: \abs{S_\ell} + 2 \: \rank_{\Z / \ell \Z} Cl_K[\ell].
		\end{equation*}
	\end{lemma}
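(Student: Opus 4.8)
\emph{Proof proposal.} The plan is to run the classical weak Mordell--Weil descent for $E(K)/\ell E(K)$, tracking $\Z/\ell\Z$-ranks throughout, and to exploit the hypothesis $E[\ell]\subset E(K)$ to trivialize the Galois action on $E[\ell]$. First I would set up the Kummer pairing: for $P\in E(K)$ choose $Q\in E(\bar K)$ with $\ell Q=P$ and put $\kappa(P,\sigma)=\sigma(Q)-Q\in E[\ell]$ for $\sigma\in\Gal(\bar K/K)$. As usual $\kappa$ is well defined and bilinear, with left kernel $\ell E(K)$ and right kernel $\Gal(\bar K/L)$, where $L:=K\bigl([\ell]^{-1}E(K)\bigr)$ is obtained from $K$ by adjoining the coordinates of all $\ell$-division points of all points of $E(K)$; in particular $L/K$ is abelian of exponent dividing $\ell$ because $E[\ell]\subset E(K)$. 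Thus $\kappa$ induces an injection
\[ E(K)/\ell E(K)\hookrightarrow \mathrm{Hom}\bigl(\Gal(L/K),E[\ell]\bigr). \]

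The next, and geometrically crucial, step is to show that $L/K$ is unramified outside $S_\ell$. This is a N\'eron--Ogg--Shafarevich type assertion: at a place $v\notin S_\ell$ the curve $E$ has good reduction and $v(\ell)=0$, so multiplication by $\ell$ is \'etale on the special fibre of the N\'eron model at $v$, $\ell$-division points reduce injectively, and one concludes that an inertia group at $v$ acts trivially on $E[\ell]\subset E(L)$, hence on $\Gal(L/K)$. This is precisely the content of Proposition VIII.1.5 (and Exercise 8.1) in \cite{Silverman:2009The-arithmetic}, cf. Theorem 2 in \cite{Ooe:1989On-the-Mordell-Weil}, so in the write-up I would cite it rather than reprove it; I expect this ramification bound to be the only ingredient genuinely requiring arithmetic input about $E$, everything else being formal. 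It follows that $\Gal(L/K)$ is a quotient of $G_{K,S_\ell}$, the Galois group of the maximal extension of $K$ unramified outside $S_\ell$, so that $\mathrm{Hom}(\Gal(L/K),\Z/\ell\Z)$ embeds into $\mathrm{Hom}(G_{K,S_\ell},\Z/\ell\Z)$.

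Now I would pass to Kummer theory of $K$ itself. Choosing a $\Z/\ell\Z$-basis of $E[\ell]$ identifies $\mathrm{Hom}(\Gal(L/K),E[\ell])$ with $\mathrm{Hom}(\Gal(L/K),\Z/\ell\Z)^2$, so
\[ \rank_{\Z/\ell\Z}E(K)/\ell E(K)\le 2\,\rank_{\Z/\ell\Z}\mathrm{Hom}(G_{K,S_\ell},\Z/\ell\Z). \]
Since $E[\ell]\subset E(K)$, the nondegeneracy and Galois-equivariance of the Weil pairing force $\mu_\ell\subset K$ (exactly as in the proof of Lemma \ref{lemma_ell_torsion}), so $\Z/\ell\Z\cong\mu_\ell$ as $\Gal(\bar K/K)$-modules and ordinary Kummer theory identifies $\mathrm{Hom}(G_{K,S_\ell},\mu_\ell)$ with
\[ K(S_\ell,\ell):=\bigl\{\,x\in K^\times/(K^\times)^\ell : \nu_v(x)\equiv 0 \pmod\ell \text{ for all } v\in M_K^0\setminus S_\ell\,\bigr\}. \]

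Finally I would bound this last group via the standard exact sequence
\[ 0\longrightarrow \mathcal{O}_{K,S_\ell}^\times/(\mathcal{O}_{K,S_\ell}^\times)^\ell \longrightarrow K(S_\ell,\ell)\longrightarrow Cl_{S_\ell}(K)[\ell]\longrightarrow 0, \]
where $Cl_{S_\ell}(K)$ is the class group of $\mathcal{O}_{K,S_\ell}$, namely $Cl_K$ modulo the classes of the finite primes in $S_\ell$. As $S_\ell$ is finite and contains all archimedean places, Dirichlet's $S$-unit theorem gives $\mathcal{O}_{K,S_\ell}^\times\cong\mu(K)\times\Z^{\abs{S_\ell}-1}$, and since $\ell\mid\#\mu(K)$ the left-hand term has $\Z/\ell\Z$-rank exactly $\abs{S_\ell}$; the right-hand term has $\Z/\ell\Z$-rank at most $\rank_{\Z/\ell\Z}Cl_K[\ell]$ because $Cl_{S_\ell}(K)$ is a quotient of the finite abelian group $Cl_K$ (for a finite abelian group the $\ell$-rank of the $\ell$-torsion equals the $\ell$-rank of the $\ell$-cotorsion, and the latter can only decrease under passage to quotients). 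Hence $\rank_{\Z/\ell\Z}K(S_\ell,\ell)\le\abs{S_\ell}+\rank_{\Z/\ell\Z}Cl_K[\ell]$, and feeding this into the previous displayed inequality yields the asserted bound.
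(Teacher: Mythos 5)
Your proposal is correct, and it is essentially the argument the paper relies on: the paper does not prove this lemma itself but cites Exercise 8.1 of \cite{Silverman:2009The-arithmetic} and Theorem 2 of \cite{Ooe:1989On-the-Mordell-Weil}, whose proof is exactly this Kummer-pairing descent — unramifiedness of $K([\ell]^{-1}E(K))/K$ outside $S_\ell$, reduction to $K(S_\ell,\ell)$ via the Weil pairing and Kummer theory, and the $S$-unit/class-group exact sequence giving $\abs{S_\ell}+\rank_{\Z/\ell\Z}Cl_K[\ell]$. No gaps to report.
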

	For any number field $L$ and an ideal $I$ in $\mathcal{O}_L$ let us denote by
	\begin{align*}
		\omega_L(I) := \abs{\left\{\p \text{ prime ideal in }L : \p \mid I\right\}}.
	\end{align*}
	Looking at the field $K_\ell = k(E[\ell])$ as defined above, we see that it trivially satisfies the condition in Lemma \ref{rank_bound_prop}. Let $S_\ell$ be the set defined in the same lemma. Then
	\begin{align*}
		\abs{S_\ell}  \leq \omega_{K_\ell}(\D_{E/K_\ell})+2[K_\ell : \Q] =  \omega_{K_\ell} (\D_{E/K_\ell}) + 2[K_\ell : k]d.
	\end{align*}

	From Proposition VII.5.4.(b) in \cite{Silverman:2009The-arithmetic}, it follows that if $E$ has good reduction at $\p \in M_k^0$, then $E$ also has good reduction at any $\mathfrak{P}\in M_{K_\ell}^0$ above $\p$.
	This shows that
	\[
	\omega_{K_\ell}(\D_{E/K_\ell}) \leq [K_\ell:k] \omega_k(\D_{E/k}),
	\]
	so we can further bound the size of $S_\ell$ as
	\begin{align*}
		\abs{S_\ell} \leq [K_\ell : k] \left(\omega_k (\D_{E/k}) +2d\right).
	\end{align*}
	Using the structure theorem for finitely generated groups we can see that $\rank_{\Z / \ell \Z} E(K_\ell) / \ell E(K_\ell) \geq \rank E(K_\ell)$, which gives the following:
	\begin{align}
		\begin{split}
			\label{rank1}
			\rank E(K_\ell) & \leq \rank_{\Z / \ell \Z} E(K_\ell)/\ell E(K_\ell) \leq 2\left(\abs{S_\ell} + \rank_{\Z / \ell \Z}Cl_K[\ell]\right) \\
			& \leq 2\left( [K_\ell : k] \left(\omega_k (\D_{E/k	}) +2d\right) + \rank_{\Z / \ell \Z}Cl_{K_\ell}[\ell]\right).
		\end{split}	
	\end{align}
	Because the rank can not decrease when passing to the bigger field, this gives an upper bound on $r :=\rank E(k) \leq \rank E(K_\ell)$.
	
	We use the following lemma which is a special case of Theorem 2.2 of Rosen \cite{Rosen:2014Class}, itself a consequence of previous work by Cornell in \cite{Cornell:1983Relative}, which relates $Cl_{K_\ell} [\ell]$ back to $Cl_k [\ell]$ in some specific cases. It uses the relative genus theory, which can be seen as a generalization of the classical Gauss genus theory. We also note that $Cl_L/\ell Cl_L \cong Cl_L[\ell]$ for any number field $L$ and prime $\ell \in \Z$.
	
	\begin{lemma}
		\label{lemma_rosen}
		Let $\ell \in \Z$ be a prime and $K/k$ a Galois extension of number fields with $\Gal(K/k)$ cyclic of order $\ell$.
		Let $t_{K/k}$ be the number of primes (finite or infinite) in $k$ which ramify in $K$. Then
		\begin{equation}
			\rank_{\Z / \ell \Z}Cl_K/\ell Cl_K \leq \ell \left( \rank_{\Z / \ell \Z}Cl_k/\ell Cl_k + \max \left\{ 0, t_{K/k}-1 \right\} \right).
		\end{equation}
	\end{lemma}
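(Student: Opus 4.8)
The plan is to split the argument into a module-theoretic reduction and a genus-theoretic estimate for the $G$-coinvariants of the class group, where $G=\Gal(K/k)$ is cyclic of order $\ell$ with generator $\sigma$. Since $\ell$ is prime, $\mathbb{F}_{\ell}[G]\cong\mathbb{F}_{\ell}[x]/(x^{\ell})$ with $x=\sigma-1$ is a local principal ideal ring with residue field $\mathbb{F}_{\ell}$, so every finitely generated $\mathbb{F}_{\ell}[G]$-module $M$ is a direct sum of cyclic modules $\mathbb{F}_{\ell}[G]/(x^{j})$, $1\le j\le\ell$; each summand adds at most $\ell$ to $\dim_{\mathbb{F}_{\ell}}M$ and exactly $1$ to $\dim_{\mathbb{F}_{\ell}}(M/xM)$, whence $\dim_{\mathbb{F}_{\ell}}M\le\ell\cdot\dim_{\mathbb{F}_{\ell}}(M/xM)$. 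Applying this to $M=Cl_K/\ell Cl_K$ with its natural $G$-action, and noting that $\rank_{\Z/\ell\Z}Cl_L/\ell Cl_L=\dim_{\mathbb{F}_{\ell}}(Cl_L/\ell Cl_L)$ for every number field $L$, the lemma reduces to the inequality
\[
\dim_{\mathbb{F}_{\ell}}\bigl((Cl_K/\ell Cl_K)_G\bigr)\ \le\ \rank_{\Z/\ell\Z}Cl_k[\ell]+\max\{0,\,t_{K/k}-1\}.
\]

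To prove this I would pass to the genus field. Let $H/K$ be the maximal everywhere-unramified (archimedean places included) abelian extension of exponent $\ell$, so that $\Gal(H/K)\cong Cl_K/\ell Cl_K$; since $H$ is canonically attached to $K$, the extension $H/k$ is Galois, and $G$ acts on $\Gal(H/K)$. Let $L\subseteq H$ be the subfield fixed by $(\sigma-1)\Gal(H/K)$, so $\Gal(L/K)\cong(Cl_K/\ell Cl_K)_G=:V$, the group to be bounded. As $(\sigma-1)\Gal(H/K)$ is $G$-stable, $L/k$ is Galois; and since the conjugation action of $\Gal(L/k)$ on $\Gal(L/K)=V$ is the induced $G$-action, which is trivial because $V$ consists of coinvariants, $V$ is \emph{central} in $\Gal(L/k)$. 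A central extension of the cyclic group $G$ by $V$ is abelian, so $\Gal(L/k)$ is abelian---this structural fact is the crucial input.

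Next I would read off ramification. Because $L/K$ is unramified, at each place $v$ of $k$ that ramifies in $K$ the inertia group $I_v(L/k)$ injects into $\Gal(K/k)=G$; as $[K:k]=\ell$ is prime, $K/k$ is totally ramified at $v$, so this injection is onto and $I_v(L/k)\cong\Z/\ell\Z$. Let $I\le\Gal(L/k)$ be generated by all these inertia groups (decomposition groups at archimedean places included) and put $L_0=L^{I}$; then $L_0/k$ is everywhere unramified, hence lies in the Hilbert class field of $k$, so $\Gal(L_0/k)$ is a quotient of $Cl_k$. If $t_{K/k}=0$ then $L/k$ itself is everywhere unramified, and since $V\subseteq\Gal(L/k)$ is elementary abelian we get $\dim_{\mathbb{F}_{\ell}}V\le\dim_{\mathbb{F}_{\ell}}\Gal(L/k)[\ell]\le\rank_{\Z/\ell\Z}Cl_k[\ell]$. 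If $t_{K/k}\ge1$, fix a ramified place $v_1$; since $K/k$ is totally ramified at $v_1$ while $L_0/k$ is unramified there, $K\cap L_0=k$, so $V=\Gal(L/K)$ and $I$ generate $\Gal(L/k)$, giving $[I:I\cap V]=[\Gal(L/k):V]=\ell$. Here abelianness pays off again: $I$ is an abelian group generated by the $t_{K/k}$ inertia generators, each of order $\ell$, hence $I\cong(\Z/\ell\Z)^{s}$ with $s\le t_{K/k}$, so $V\cap I\cong(\Z/\ell\Z)^{s-1}$ has $\mathbb{F}_{\ell}$-dimension at most $t_{K/k}-1$; meanwhile $V/(V\cap I)$ embeds into $\Gal(L_0/k)[\ell]$, of dimension at most $\rank_{\Z/\ell\Z}Cl_k[\ell]$. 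Adding the two contributions gives $\dim_{\mathbb{F}_{\ell}}V\le\rank_{\Z/\ell\Z}Cl_k[\ell]+(t_{K/k}-1)$, and combining both cases with the factor $\ell$ from the first step proves the lemma.

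The part I expect to require the most care is making the genus-field setup airtight: identifying $\Gal(L/K)$ with the coinvariants via class field theory while keeping track of archimedean places (which genuinely ramify when $\ell=2$), checking that $\Gal(L/k)$ is abelian, and---the true source of the "$-1$"---the observation that $I\cap V$ has codimension exactly one in the elementary abelian group $I$. This second step is a form of relative genus theory, so as an alternative I would simply invoke Rosen's Theorem~2.2, whose proof rests on Cornell's work and ultimately on Chevalley's ambiguous class number formula; the factor $\tfrac{1}{\ell}$ in that formula is precisely where the term $\max\{0,t_{K/k}-1\}$ originates.
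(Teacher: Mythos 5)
Your proof is correct, but it takes a genuinely different route from the paper: the paper does not prove this lemma at all --- it quotes it as a special case of Rosen's Theorem 2.2, which in turn rests on Cornell's relative genus theory; that citation is exactly the fallback you mention in your closing paragraph. Your argument is a self-contained proof, and both halves check out. The reduction $\dim_{\mathbb{F}_\ell} M \leq \ell \dim_{\mathbb{F}_\ell} (M/(\sigma-1)M)$ for $M = Cl_K/\ell Cl_K$ over $\mathbb{F}_\ell[G] \cong \mathbb{F}_\ell[x]/(x^{\ell})$ is sound (one can even avoid the classification of modules by noting that multiplication by $x^i$ surjects $M/xM$ onto $x^iM/x^{i+1}M$ and $x^{\ell}M=0$). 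The genus-theoretic half is the standard relative genus argument done correctly: $V=\Gal(L/K)$ is central in $\Gal(L/k)$ because it is a group of coinvariants, a central extension of a cyclic group is abelian, each ramified place of $k$ gives an inertia subgroup of $\Gal(L/k)$ isomorphic to $G$ since $L/K$ is everywhere unramified and $[K:k]=\ell$ is prime, $L_0=L^{I}$ sits inside the Hilbert class field of $k$, and the index computation $[I:I\cap V]=\ell$ (valid once $t_{K/k}\geq 1$, via $K\cap L_0=k$) is precisely the source of the $\max\{0,\,t_{K/k}-1\}$. The points you flag as delicate (equivariance of the Artin map, the wide Hilbert class field and archimedean conventions when $\ell=2$, conjugate inertia groups coinciding because $\Gal(L/k)$ is abelian) are genuine but routine. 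What your route buys is transparency and self-containedness --- the reader sees exactly where the factor $\ell$ and the term $t_{K/k}-1$ come from --- whereas the paper's approach buys brevity and defers these class-field-theoretic verifications to Rosen and Cornell, whose ambiguous-class-number-style input your direct Galois-theoretic argument replaces.
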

	Let us recall the definition of a ramified infinite prime in an extension $k \subset K$. Let $v \in M_k^\infty$ and $\sigma :k
	 \hookrightarrow \C$ be a corresponding embedding. We say that $v$ ramifies in $K$ if $\sigma$ is a real embedding and there exists an extension $\bar{\sigma}: K \hookrightarrow \C$ of $\sigma$ (i.e. $\bar{\sigma} \vert_k = \sigma$) with $ \mathrm{Im}(\bar{\sigma}) \not\subset \R$.
		
	From Remark \ref{remark_2torsion} we know that if $E(k) \cap E[2] \neq \{O\}$, we have $[K_2:k] \leq 2$. In the case when this degree is exactly $2$ we can apply Lemma \ref{lemma_rosen} to get
	\begin{equation}
		\label{eq_rosen_2}
		\rank_{\Z / 2 \Z} Cl_{K_2} [2] \leq 2 \left(\rank_{\Z / 2 \Z} Cl_k [2]  + t_{K_2/k}\right).
	\end{equation}
	Similarly, if $\ell=3$ and $E(k) \cap E[3] \neq \{O\}$,  in Lemma \ref{lemma_3torsion} we have seen that we have either $K_3=k'$ or $\Gal(K_3/k') \cong \Z / 3\Z$, where $k' = k(\sqrt{-3})$. In the second case Lemma \ref{lemma_rosen} with $\ell=3$ for the extension $K_3 \supset k'$ gives
	\begin{align}
		\label{eq_rosen_3torsion_general}
		\rank_{\Z / 3 \Z} Cl_{K_3} [3] \leq 3 \left(\rank_{\Z /3 \Z} Cl_{k'} [3]  + t_{K_3/k'}\right).
	\end{align}
	
	From Lemma \ref{lemma_ell_torsion} we get an equivalent bound for any prime $\ell \in \Z$.
	If $E(k) \cap E[\ell] \neq \{O\}$, then Lemma \ref{lemma_ell_torsion} applied to $k' = k(\xi_\ell)$ implies that either $K_\ell = k'$ or $\Gal(K_\ell / k') \cong \Z / \ell \Z$. In the second case Lemma \ref{lemma_rosen} for the extension $K_\ell \supset k'$ gives
	\begin{align}
		\label{eq_rosen_ell_torsion}
		\rank_{\Z / \ell \Z} Cl_{K_\ell} [\ell] \leq \ell \left(\rank_{\Z /\ell \Z} Cl_{k'} [\ell]  + t_{K_\ell/k'}\right).
	\end{align}
	Note that the bounds \eqref{eq_rosen_2}--\eqref{eq_rosen_ell_torsion} are also trivially satisfied in the case when $K_\ell = k'$, so we can proceed without distinguishing these two cases. Therefore we can bound the rank as
	\begin{align}
		r &\leq 2 \left([K_\ell:k]\left(\omega_k(\D_{E/k})+2d\right)+ \ell \left(\rank_{\Z /\ell \Z} Cl_{k'} [\ell]  + t_{K_\ell/k'}\right)\right) \nonumber \\
		& \leq 2 \ell \left([k':k]\left(\omega_k(\D_{E/k})+2d\right) + \rank_{\Z /\ell \Z} Cl_{k'} [\ell]  + t_{K_\ell/k'} \right).
		\label{rank_bound_together_general}
	\end{align}
	If we assume that $k$ contains a primitive $\ell$-th root of unity, that is $k'=k$, the bound above becomes
	\begin{align}
		\label{eq_rank_bound_ell}
		r \leq 2 \ell \left(\omega_k(\D_{E/k})+2d + \rank_{\Z /\ell \Z} Cl_{k} [\ell]  + t_{K_\ell/k} \right).
	\end{align}
		
	We want to bound $t_{K_\ell/k}$ as defined in Lemma \ref{lemma_rosen} in terms of $E$ and $k$. So we want to bound the number of finite and infinite primes of $k$ that ramify in $K_\ell=k(E[\ell])$. For the infinite case, we have that the number of ramified primes is trivially bounded by the number of all embeddings of $k$ into $\C$, so it is at most $d=[k:\Q]$. The following lemma deals with the finite primes. It is  essentially the converse of the N\'{e}ron--Ogg--Shafarevich criterion, just stated for global fields.
	\begin{lemma}
		\label{lemma_ramified_primes}
		Let $\ell \geq 2$ be an integer and $E$ an elliptic curve over a number field $k$ with the minimal discriminant $\D_{E/k	}$. Let $t_0$ denote the number of finite primes of $k$ that ramify in $K:=k(E[\ell])$. Then
		\begin{equation*}
			t_0 \leq \omega_k (\ell \D_{E/k}).
		\end{equation*}
	\end{lemma}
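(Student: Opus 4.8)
The plan is to prove the statement in contrapositive form, prime by prime: if $\p$ is a finite prime of $k$ with $\p \nmid \ell$ at which $E$ has good reduction, then $\p$ is unramified in $K = k(E[\ell])$. Granting this, every finite prime of $k$ that ramifies in $K$ must either divide $\ell$ or be a prime of bad reduction for $E$; since (as recalled in Section \ref{Definitions_strategy}) $\D_{E/k}$ is an integral ideal supported exactly on the primes of bad reduction, the set of such primes is contained in $\{\p \text{ finite} : \p \mid \ell\D_{E/k}\}$, which has cardinality $\omega_k(\ell\D_{E/k})$. This gives $t_0 \leq \omega_k(\ell\D_{E/k})$.

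First I would recall that $K/k$ is Galois: the absolute Galois group $\Gal(\bar{k}/k)$ acts on $E[\ell]$, and $K$ is by construction the fixed field of the kernel of the mod-$\ell$ representation $\bar{\rho}\colon \Gal(\bar{k}/k) \to \mathrm{Aut}(E[\ell]) \cong \mathrm{GL}_2(\Z/\ell\Z)$, so ramification of a prime in $K/k$ is well defined and $\Gal(K/k) \cong \mathrm{Im}(\bar{\rho})$. Now fix a finite prime $\p$ of $k$ with $\p \nmid \ell$ and good reduction, choose a prime $\mathfrak{P}$ of $K$ above $\p$, and let $I_{\mathfrak{P}} \subset \Gal(K/k)$ be the corresponding inertia group; it suffices to show $I_{\mathfrak{P}}$ is trivial.

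The core input is the N\'eron--Ogg--Shafarevich criterion (e.g.\ Silverman, \cite{Silverman:2009The-arithmetic}, Theorem VII.7.1): since $E$ has good reduction at $\p$ and $\ell$ is invertible in the residue field $\Ok/\p$, the $\Gal(\bar{k}_\p/k_\p)$-module $E[\ell]$ is unramified, i.e.\ the inertia subgroup of $\Gal(\bar{k}_\p/k_\p)$ acts trivially on $E[\ell]$. Passing to the global picture via a chosen embedding $\bar{k} \hookrightarrow \bar{k}_\p$ compatible with $\mathfrak{P}$, the inertia group $I_{\mathfrak{P}}$ is identified with the image in $\Gal(K/k) = \mathrm{Im}(\bar{\rho})$ of the local inertia subgroup; since the latter acts trivially on $E[\ell]$ and $K$ is cut out precisely by $\ker \bar{\rho}$, we get $I_{\mathfrak{P}} = 1$. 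Hence $\p$ is unramified in $K$, which proves the contrapositive and therefore the lemma.

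The only point requiring genuine care is the passage between the global extension $K/k$ and the local statement of N\'eron--Ogg--Shafarevich: one must check that, under the embedding $\bar{k}\hookrightarrow\bar{k}_\p$, the decomposition and inertia groups of $\mathfrak{P}$ over $\p$ are matched with the corresponding local subgroups, and that ``inertia acts trivially on $E[\ell]$'' translates exactly into ``$\p$ is unramified in $k(E[\ell])$''. This is standard but should be stated explicitly; the remaining content of the proof is just the bookkeeping with the prime divisors of the ideal $\ell\D_{E/k}$.
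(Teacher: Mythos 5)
Your proof is correct and follows essentially the same route as the paper: both reduce to showing that a finite prime $\p \nmid \ell$ of good reduction is unramified in $k(E[\ell])$ by passing to the local extension and invoking the fact that inertia acts trivially on $E[\ell]$ (you cite the N\'eron--Ogg--Shafarevich criterion, Silverman VII.7.1, while the paper uses Proposition VII.4.1(a) together with $K_{\mathfrak{P}} = k_\p(E[\ell])$ --- the same underlying input). The only cosmetic difference is that you phrase the local-to-global identification via the kernel of the mod-$\ell$ representation rather than via completions, which is an equivalent bookkeeping of the same argument.
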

	\begin{proof}
		We have to show that if a prime $\p \subset \Ok$ ramifies in $K$, then $\p \mid \ell$ or $\p \mid \D_{E/k}$, or equivalently: if $\p \nmid \ell$ and $\p \nmid \D_{E/k}$, then $\p$ is unramified in $K$. So let $\p$ be a maximal ideal of $\Ok$ such that $v_\p(\ell) = 0$ and $\p \nmid \D_{E/k}$, i.e. $E$ has good reduction at $\p$.  Let $\mathfrak{P} \subset \OK$ be any prime lying above $\p$ and $e_{\mathfrak{P}}$ its ramification index. Let $k_\p, K_{\mathfrak{P}}$ be the corresponding completions, so $k_\p \subset K_{\mathfrak{P}}$. If $\hat{\p}, \hat{\mathfrak{P}}$ are the unique maximal ideals of $k_\p,K_{\mathfrak{P}}$ respectively, then we know that the ramification index of $\hat{\mathfrak{P}}$ over $\hat{\p}$ is equal to $e_{\mathfrak{P}}$. Therefore it is enough to show that $K_{\mathfrak{P}} \supset k_\p$ is unramified.
		
		Because $K/k$ is a Galois extension, we know that $K_{\mathfrak{P}} / k_\p$ is as well, so we can look at $\Gal(K_{\mathfrak{P}} / k_\p)$. Let us denote by $\tilde{F_\p}$ and $\tilde{F_{\mathfrak{P}}}$ the corresponding residue fields and by $R_\p$ and $R_{\mathfrak{P}}$ the rings of integers of $k_\p$ and $K_{\mathfrak{P}}$, respectively. Let $\Inertiap$ be the inertia group, so $\Inertiap = \left\{\sigma \in \Gal(K_{\mathfrak{P}} / k_\p) : \bar{\sigma} = id_{\tilde{F_{\mathfrak{P}}}} \right\}$, where $\sigma \in \Gal(\tilde{F_{\mathfrak{P}}} / \tilde{F_\p})$ is the field isomorphism induced by $\sigma$, i.e. $\sigma (x) = \sigma (x) + \sigma (\mathfrak{P})$ for $x \in R_{\mathfrak{P}}$. We know that the fixed field of $\Inertiap$ is the maximal unramified extension of $k_\p$ in $K_{\mathfrak{P}}$. So if we denote this field by $L$, we want to show that $L \supset K_{\mathfrak{P}}$, which amounts to showing that $\Inertiap$ acts as the identity on $K_{\mathfrak{P}}$.
		
		Because $K = k(E[\ell])/k$ is a finite extension, we have that
		\begin{equation}
			K_{\mathfrak{P}} = (k(E[\ell]))_{\mathfrak{P}} = k_\p (E[\ell]).
			\label{kP(Em))}
		\end{equation}
		Now let $\sigma$ be in $\Inertiap \subset \Gal({K_{\mathfrak{P}}/k_\p})$, so $\Inertiap$ acts trivially on $k_\p$. Because of \eqref{kP(Em))}, if we show that $\Inertiap$ acts trivially on $E[\ell]$, we can conclude that it does so on the whole $K_{\mathfrak{P}}$.
		
		Since we have assumed that $v_\p(\ell)=0$ and $E$ has good reduction at $\p$, it follows that also $v_{\hat{\mathfrak{P}}}(\ell)=0$ and $E$ has good reduction at $\hat{\mathfrak{P}}$. With this we can apply Proposition VII.4.1.(a) in \cite{Silverman:2009The-arithmetic}, which implies that $E[\ell]$ is unramified at $\hat{\mathfrak{P}}$. This means exactly that the action of $\Inertiap$ on $E[\ell]$ is trivial.
		
	\end{proof}
	For $K_\ell = k(E[\ell])$ this gives us $t_{K_\ell / k} \leq d + \omega_k(\ell \D_{E/k	}) \leq 2d+ \omega_k(\D_{E/k})$. 
	If $k$ contains a primitive $\ell$-th root of unity, then together with \eqref{eq_rank_bound_ell} this implies
	\begin{equation}
		\label{rank_bound_together2}
		r \leq 2 \ell \left(2 \omega_k(\D_{E/k}) + 4d + \rank_{\Z / \ell \Z} Cl_k[\ell]\right).
	\end{equation}
	
	It remains to study the case when $k$ does not contain a primitive $\ell$-th root of unity. For that we need to bound the size of class group of $k'= k (\xi_\ell)$ in terms of $k$, specifically the discriminant $\Delta_{k}$ of $k$. Since this is a cyclotomic extension of $k$, this is not too hard.
	\begin{lemma}
		\label{Clk'_first_bound}
		Let $\ell \geq 3$ be a prime integer, $k$ a number field with $d:=[k:\Q]$ and $k':=k(\xi_\ell)$, where $\xi_\ell$ is an $\ell$-th root of unity. Then 
		\begin{equation*}
			\abs{Cl_{k}} \leq c_{14}(d) \ell^{\frac{\ell d}{2}} \abs{\Delta_{k}}^{\frac{\ell}{2}}.
		\end{equation*}
	\end{lemma}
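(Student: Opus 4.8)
The plan is to bound the class number $\abs{Cl_{k'}}$ of $k' = k(\xi_\ell)$ by Minkowski's theorem, after first controlling the discriminant $\Delta_{k'}$ in terms of $\Delta_k$ and $\ell$; since $k'$ is a cyclotomic extension of $k$, this control is cheap. So first I would bound $[k':k]$ and $\Delta_{k'}$. As $\xi_\ell$ is a root of the $\ell$-th cyclotomic polynomial $\Phi_\ell(x) = x^{\ell-1} + \cdots + x + 1$, we have $[k':k] \le \ell - 1$ and $\Ok[\xi_\ell] \subseteq \mathcal{O}_{k'}$. The minimal polynomial $f$ of $\xi_\ell$ over $k$ lies in $\Ok[x]$ and divides $\Phi_\ell$ there (Gauss's lemma, using that $\Ok$ is integrally closed and $\Phi_\ell, f$ are monic), so $\Disc(f)$ divides $\Disc(\Phi_\ell) = \pm\ell^{\ell-2}$ in $\Ok$. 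Hence the relative discriminant ideal $\mathfrak{d}_{k'/k}$, which divides $\Disc(f)\,\Ok$, divides $\ell^{\ell-2}\Ok$, so $\NkQideal(\mathfrak{d}_{k'/k}) \le \ell^{(\ell-2)d}$. Combined with the tower formula $\abs{\Delta_{k'}} = \abs{\Delta_k}^{[k':k]}\,\NkQideal(\mathfrak{d}_{k'/k})$ and with $\abs{\Delta_k} \ge 1$, $[k':k] \le \ell-1$, this gives $\abs{\Delta_{k'}} \le \abs{\Delta_k}^{\ell-1}\,\ell^{(\ell-2)d}$.

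Next I would bound $\abs{Cl_{k'}}$ by a count of ideals. Every ideal class of $k'$ contains an integral ideal of $\mathcal{O}_{k'}$ of norm at most the Minkowski constant $m_{k'}$ of $k'$ (defined as in Lemma \ref{model}), and $m_{k'} \le \sqrt{\abs{\Delta_{k'}}}$ since the Minkowski constant is $\le 1$ in degree $n' := [k':\Q] \ge 2$. Moreover the number of integral ideals of $\mathcal{O}_{k'}$ of norm at most $X \ge 1$ is at most $X(1+\log X)^{n'-1}$, which follows, e.g., by comparing the Dedekind zeta function of $k'$ with $\zeta(s)^{n'}$ coefficientwise and then estimating $\sum_{m\le X}\tau_{n'}(m) \le X(\sum_{a \le X} a^{-1})^{n'-1}$. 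Since $n' \le (\ell-1)d$, this yields
\[
\abs{Cl_{k'}} \le \sqrt{\abs{\Delta_{k'}}}\,\bigl(1 + \tfrac{1}{2}\log\abs{\Delta_{k'}}\bigr)^{(\ell-1)d}.
\]

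Finally I would substitute the discriminant bound and absorb the logarithmic factor into a power of $\abs{\Delta_k}$. From the first step $\sqrt{\abs{\Delta_{k'}}} \le \abs{\Delta_k}^{(\ell-1)/2}\,\ell^{(\ell-2)d/2}$ and $\log\abs{\Delta_{k'}} \le (\ell-1)\log\abs{\Delta_k} + (\ell-2)d\log\ell$, so the logarithmic factor is at most $c(d,\ell)\bigl(1 + \log\abs{\Delta_k}\bigr)^{(\ell-1)d}$ for a constant depending only on $d$ and $\ell$; and since $t \mapsto (1+\log t)^{M}\,t^{-1/2}$ is bounded on $[1,\infty)$ by a constant depending only on $M$, taking $M = (\ell-1)d$ bounds this in turn by $c'(d,\ell)\,\abs{\Delta_k}^{1/2}$. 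Collecting the factors gives
\[
\abs{Cl_{k'}} \le c'(d,\ell)\,\ell^{(\ell-2)d/2}\,\abs{\Delta_k}^{(\ell-1)/2 + 1/2} \le c_{14}(d)\,\ell^{\ell d/2}\,\abs{\Delta_k}^{\ell/2},
\]
which is the asserted bound (with the $\ell$-dependence of $c_{14}$ left implicit, per the convention of Section \ref{Definitions_strategy}). The only mildly delicate step is the first one — certifying that $\mathfrak{d}_{k'/k}$ is supported only at $\ell$ with exponent $\le \ell-2$; everything after that is routine bookkeeping around the Minkowski bound.
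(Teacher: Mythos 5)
Your proof is correct and establishes the intended statement (the bound on $\abs{Cl_{k'}}$; the $\abs{Cl_k}$ in the lemma's display is a typo, and the quantity used later in \eqref{eq_class_group_bound} is indeed $\abs{Cl_{k'}}$). Your skeleton is the same as the paper's: bound $\abs{Cl_{k'}}$ by $\abs{\Delta_{k'}}^{1/2}$ times a power of $\log\abs{\Delta_{k'}}$, control $\abs{\Delta_{k'}}$ via the conductor--discriminant tower formula together with a bound on the relative discriminant of the cyclotomic extension $k'/k$, and then absorb the logarithmic factor into a small power of the discriminant. The genuine difference is that you prove the two ingredients the paper cites: in place of Narkiewicz's Theorem 4.10 you derive the class-number bound directly from the Minkowski bound plus the coefficientwise domination of $\zeta_{k'}(s)$ by $\zeta(s)^{n'}$ (the $\tau_{n'}$ ideal count), and in place of Cohen's Theorem 1.1(4) you bound $\NkQideal(\mathfrak{d}_{k'/k})$ through the divisibility $\mathfrak{d}_{k'/k}\mid \Disc(f)\Ok\mid \ell^{\ell-2}\Ok$ coming from Gauss's lemma and $\Disc(\Phi_\ell)=\pm\ell^{\ell-2}$, which even yields the slightly sharper exponent $(\ell-2)d$ instead of $(\ell-1)d$. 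You also absorb the log factor after substituting the tower formula (into $\abs{\Delta_k}^{1/2}$), whereas the paper absorbs it beforehand (into $\abs{\Delta_{k'}}^{1/(2(\ell-1))}$); both give the exponent $\ell/2$. All your steps are effective, your restriction to $n'=[k':\Q]\geq 2$ is automatic since $\ell\geq 3$ forces $n'\geq \ell-1\geq 2$, and the dependence of your constant on $\ell$ matches the paper's convention that this dependence is left implicit. In short: same decomposition, but self-contained proofs of the two cited inputs, at the cost of a somewhat longer argument and with a marginally better intermediate bound.
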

	
	\begin{proof}
		Firstly, we notice that if $k' = k = \Q$, the bound is trivially satisfied. Therefore, we can assume that $[k':\Q] \geq 2$.
		Let $d'$ denote the degree of $k'$ over $\Q$.
		From Theorem 4.10 in \cite{Narkiewicz:2004Elementary}, it follows that
		\begin{align*}
			\abs{Cl_{k'}} &\leq c_{15}(d')\abs{\Delta_{k'}}^{\frac{1}{2}}\left(\log \abs{\Delta_{k'}}\right)^{d'-1}, 
		\end{align*}
		where $\Delta_{k'}$ is the discriminant of $k'$ and $c_{15}(d')$ is an effective constant. 
		For any $\epsilon>0$ and $x\geq 1$ we have that
		\begin{equation*}
			\frac{(\log x)^{d'-1}}{x^\epsilon} \leq \left(\frac{d'-1}{\epsilon e}\right)^{d'-1},
		\end{equation*}
		so by taking $\epsilon = \frac{1}{2(\ell-1)}$ and $x= \abs{\Delta_{k'}}$ we get
		\begin{equation*}
		\abs{\Delta_{k'}}^{\frac{1}{2}}\left(\log \abs{\Delta_{k'}}\right)^{d'-1} \leq c_{16}(d') \abs{\Delta_{k'}}^{\frac{\ell}{2(\ell-1)}}.
		\end{equation*}
		Let $\Delta_{k'/k} \subset \Ok$ be the relative discriminant. Then $\abs{\Delta_{k'}} = \NkQideal (\Delta_{k'/k}) \abs{\Delta_k}^{[k':k]}$. Furthermore, we have that $d'=[k':k]d \leq (\ell-1)d$ and so $\abs{\Delta_{k'}} \leq \NkQideal (\Delta_{k'/k}) \abs{\Delta_k}^{\ell-1}$. We can further bound $\abs{Cl_k'}$ as follows:
		\begin{equation*}
			\abs{Cl_{k'}} \leq c_{14}(d) \NkQideal(\Delta_{k'/k})^{\frac{\ell}{2(\ell-1)}}\abs{\Delta_{k}}^{\frac{\ell}{2}}.
		\end{equation*}
		All that is left is to bound the norm of $\Delta_{k'/k} \subset \Ok$. 
		Because $k' \supset k$ is a cyclotomic extension, from Theorem 1.1. (4) in \cite{Cohen:2003Cyclotomic} it follows that $\NkQideal (\Delta_{k'/k}) \leq \ell^{(\ell-1)d}$ and therefore
		\begin{align*}
			\abs{Cl_{k'}} & \leq c_{14}(d) \ell^{\frac{\ell d}{2}} \abs{\Delta_{k}}^{\frac{\ell}{2}}.
			\qedhere
		\end{align*}
	\end{proof}

	With $k'$ as before, it follows from the previous Lemma that if $k' \neq k$, we have
	\begin{align}
		 \rank_{\Z /\ell \Z} Cl_{k'} [\ell]  \leq \frac{1}{\log \ell}  \log \abs{Cl_{k'}} \leq c_{17} (k),
		 \label{eq_class_group_bound}
	\end{align}
	 where $c_{17}(k) = \frac{1}{\log \ell}\log\left( c_{14}(d) \ell^{\frac{\ell d}{2}} \abs{\Delta_{k}}^{\frac{\ell}{2}}\right)$.
	We can summarize these conclusion into the following Lemma.
	\begin{lemma}
		\label{rank_bound_omega}
		Let $E/k$ be an elliptic curve defined by \eqref{weierstrass_equation} and $\ell$ a prime integer. Assume further that $E(k) \cap E[\ell] \neq \{O\}$ and let $r = \rank E(k)$. Then 
		\begin{align*}
			r \leq c_{18}(k)+c_{19} \omega_k(\D_{E/k	}).
		\end{align*}
		Furthermore, if $k$ contains a primitive $\ell$-th root of unity, then $c_{18}(k) $ depends only on $d, \ell$ and $\rank_{\Z / \ell \Z} Cl_k$. Otherwise it depends on $\abs{\Delta_k}$.
	\end{lemma}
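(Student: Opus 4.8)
The plan is to combine the estimates already assembled in this section, treating separately the cases $k' = k$ and $k' \neq k$, where $k' = k(\xi_\ell)$. All of the substantive work --- Rosen's inequality (Lemma~\ref{lemma_rosen}), the description of $\Gal(K_\ell/k')$ (Lemma~\ref{lemma_ell_torsion}), the N\'eron--Ogg--Shafarevich-type control of ramification (Lemma~\ref{lemma_ramified_primes}), and the class-number bound for the cyclotomic extension $k'/k$ (Lemma~\ref{Clk'_first_bound}) --- is already encoded in the displayed inequalities \eqref{rank_bound_together_general}, \eqref{rank_bound_together2} and \eqref{eq_class_group_bound}, so what remains is bookkeeping about which constants appear.

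If $k$ contains a primitive $\ell$-th root of unity (in particular always when $\ell = 2$), then $k' = k$ and I would simply quote \eqref{rank_bound_together2} together with $\rank_{\Z/\ell\Z} Cl_k[\ell] = \rank_{\Z/\ell\Z} Cl_k$, obtaining the assertion with $c_{19} = 4\ell$ and $c_{18}(k) = 2\ell\bigl(4d + \rank_{\Z/\ell\Z} Cl_k\bigr)$; this constant depends only on $d$, $\ell$ and $\rank_{\Z/\ell\Z} Cl_k$, as claimed.

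If $k' \neq k$ (so $\ell \geq 3$), I would start from the second line of \eqref{rank_bound_together_general} and bound the two terms there that are not already proportional to $\omega_k(\D_{E/k})$. For $t_{K_\ell/k'}$, applying Lemma~\ref{lemma_ramified_primes} with base field $k'$ gives $t_{K_\ell/k'} \leq [k':\Q] + \omega_{k'}(\ell\D_{E/k'})$, where $[k':\Q]$ absorbs the infinite primes; then $[k':\Q] \leq (\ell-1)d$, $\omega_{k'}(\ell) \leq [k':\Q]$, and the propagation of good reduction to primes of $k'$ above a given prime of $k$ (Proposition~VII.5.4(b) of \cite{Silverman:2009The-arithmetic}, already used in this section for $K_\ell/k$) give $\omega_{k'}(\D_{E/k'}) \leq [k':k]\,\omega_k(\D_{E/k}) \leq (\ell-1)\,\omega_k(\D_{E/k})$, whence $t_{K_\ell/k'} \leq 2(\ell-1)d + (\ell-1)\,\omega_k(\D_{E/k})$. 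For $\rank_{\Z/\ell\Z} Cl_{k'}[\ell]$ I would invoke \eqref{eq_class_group_bound}, i.e. the bound by $c_{17}(k)$ coming from Lemma~\ref{Clk'_first_bound}, which depends on $d$, $\ell$ and $\abs{\Delta_k}$. Substituting these, along with $[k':k] \leq \ell-1$, into \eqref{rank_bound_together_general} yields $r \leq 2\ell\bigl(2(\ell-1)\,\omega_k(\D_{E/k}) + 4(\ell-1)d + c_{17}(k)\bigr)$, which has the asserted form; one can then take $c_{19} := 4\ell(\ell-1)$ uniformly, since $4\ell \leq 4\ell(\ell-1)$ for $\ell \geq 2$.

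I do not expect a real obstacle, as this step is pure assembly; the only point requiring care is tracking the dependence of the constants, in particular checking that $\abs{\Delta_k}$ enters $c_{18}(k)$ only when $\xi_\ell \notin k$ --- which is clear, since in the opposite case $k' = k$ and neither Lemma~\ref{Clk'_first_bound} nor \eqref{eq_class_group_bound} is used.
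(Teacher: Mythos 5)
Your proposal is correct and follows essentially the same route as the paper: the case $k'=k$ is read off from \eqref{rank_bound_together2}, and the case $k'\neq k$ combines \eqref{rank_bound_together_general} with \eqref{eq_class_group_bound} and a bound $t_{K_\ell/k'}\leq(\ell-1)\bigl(2d+\omega_k(\D_{E/k})\bigr)$, arriving at the same constants $c_{18}(k)=2\ell\bigl(4(\ell-1)d+c_{17}(k)\bigr)$ and $c_{19}=4\ell(\ell-1)$. The only (harmless) difference is that you bound $t_{K_\ell/k'}$ by applying Lemma \ref{lemma_ramified_primes} directly over the base field $k'$ and then comparing $\D_{E/k'}$ with $\D_{E/k}$, whereas the paper first notes $t_{K_\ell/k'}\leq(\ell-1)t_{K_\ell/k}$ and applies the lemma over $k$; both give the identical estimate.
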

	\begin{proof}
		Let $k'=k(\xi_\ell)$ as before, with $\xi_\ell$ a primitive $\ell$-th root of unity. Then if $k'=k$, the Lemma follows from \eqref{rank_bound_together2} immediately, with $c_{18}(k) = 2\ell \left(4d + \rank_{\Z / \ell \Z} Cl_k[\ell]\right) $ and $c_{19} = 4 \ell$.
		
		Assume that $k' \neq k$. Notice that this is not possible if $\ell=2$, since $\xi_2 = -1$ is always in $k$. We have that $[k':k]\leq \ell-1$ and so \eqref{rank_bound_together_general} implies
		\begin{align*}
			r \leq  2 \ell \left((\ell-1)\left(\omega_k(\D_{E/k})+2d\right) + \rank_{\Z /\ell \Z} Cl_{k'} [\ell]  + t_{K_\ell/k'} \right).
		\end{align*}
		Above each place of $k'$ there are at most $\ell-1$ places of $k$, hence $t_{K_\ell / k'} \leq (\ell-1)t_{K_\ell/k}$. So by Lemma \ref{lemma_ramified_primes} we have $t_{K_\ell / k'} \leq (\ell-1)(d+\omega_k(\ell \D_{E/k})) \leq (\ell-1) (2d+\omega_k(\D_{E/k}))$. Combining this with \eqref{eq_class_group_bound}, we get the following bound on the rank:
		\begin{align*}
			r &\leq  2\ell\left((\ell-1)\left(\omega_k(\D_{E/k})+2d\right) + c_{17} (k)  + (\ell-1) (2d+\omega_k(\D_{E/k})) \right) \\
			 & = 2\ell \left( 2(\ell-1)\left(2d + \omega_k(\D_{E/k})\right)+ c_{17} (k)\right).
		\end{align*}
		Therefore we can take $c_{18}(k) = 2\ell\left(4(\ell-1)d+ c_{17} (k)\right)$ and $c_{19} = 4\ell (\ell-1)$.
	\end{proof}
	\begin{lemma}
		\label{lemma_omega_k}
		There exists a constant $c_{20}(d) > 0$ such that for every integral ideal $I$ of $\Ok$ with $\NkQideal (I) \geq 3$ we have
		\[ \omega_k(I) \leq c_{20}(d) \frac{\log \NkQideal (I)}{\log \log \NkQideal (I)}\]
	\end{lemma}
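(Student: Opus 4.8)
The plan is to reduce the statement to the classical estimate $\omega(n) \ll \log n / \log\log n$ over $\Q$, which is already available to us: it is Th\'{e}or\`{e}me 11 of Robin \cite{Robin:1983Estimation}, quoted in the proof of Lemma \ref{lemma_primes} in the form $\omega(n) \leq 1.3841\,\log n/\log\log n$ for $n \geq 3$. The first step is to pass from prime ideals of $\Ok$ to rational primes. Writing $I = \prod_{i=1}^{t}\p_i^{a_i}$ with the $\p_i$ distinct and $a_i \geq 1$, and letting $P$ be the set of rational primes lying below the $\p_i$, we have $\omega_k(I) = t \leq d\,\abs{P}$ since at most $d = [k:\Q]$ primes of $\Ok$ lie above any rational prime. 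On the other hand $\NkQideal(\p_i) = p(\p_i)^{f(\p_i)} \geq p(\p_i)$, so $\NkQideal(I) \geq \prod_i \NkQideal(\p_i) \geq \prod_{p \in P} p =: m$. Setting $n := \NkQideal(I)$, we thus have $2 \leq m \leq n$ (with $m \geq 2$ because $n \geq 3$ forces $I \neq \Ok$), and since $m$ is squarefree $\abs{P} = \omega(m)$, whence $\omega_k(I) \leq d\,\omega(m)$.

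The second step is to bound $\omega(m)$ by a constant multiple of $\log n/\log\log n$, using only $2 \leq m \leq n$ and $n \geq 3$; I would do this by a short case distinction on the size of $m$. If $m \leq 15$, then $m$ has at most two distinct prime factors (the product of three distinct primes is at least $30$), so $\omega_k(I) \leq 2d$; since $f(x) := \log x/\log\log x$ attains its minimum on $(e,\infty)$ at $x = e^e$, with value $e$, we have $\log n/\log\log n \geq e > 2$ for every $n \geq 3$, and the desired bound holds with constant $d$. If $m \geq 16$, Robin's inequality gives $\omega(m) \leq 1.3841\,\log m/\log\log m$; since $f'(x)$ has the sign of $\log\log x - 1$, the function $f$ is increasing on $[16,\infty)$, and from $16 \leq m \leq n$ we get $\log m/\log\log m \leq \log n/\log\log n$, so $\omega_k(I) \leq 1.3841\,d\,\log n/\log\log n$. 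Taking $c_{20}(d) = 2d$ covers both cases, and the constant is effective since Robin's bound is.

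I do not anticipate a genuine obstacle here: the content is entirely the reduction to $\Q$ together with the known bound over $\Q$, and the only points requiring a little care are the elementary analysis of $f(x) = \log x/\log\log x$ — its monotonicity on $[16,\infty)$ and the lower bound $f(x) \geq e$ for $x \geq 3$ — and making sure the small-$m$ case is handled so that the final constant depends only on $d$.
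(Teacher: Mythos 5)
Your proof is correct, and it follows essentially the paper's route: factor $I$ into prime ideals, pass to the rational primes below them using that at most $d$ primes of $\Ok$ lie above any rational prime, and invoke Robin's Th\'{e}or\`{e}me 11. The paper's version is slightly shorter because each such rational prime already divides the integer $\NkQideal(I)$ itself (since $\NkQideal(\p_i)^{e_i} = p(\p_i)^{f(\p_i)e_i}$ divides $\NkQideal(I)$), so one gets $\omega_k(I) \leq d\,\omega\bigl(\NkQideal(I)\bigr)$ and applies Robin directly to $\NkQideal(I) \geq 3$ with $c_{20}(d) = 1.3841\,d$, which makes your auxiliary squarefree product $m$, the monotonicity of $\log x/\log\log x$ on $[16,\infty)$, and the small-$m$ case split unnecessary.
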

	\begin{proof}
		Let $I \subset \Ok$ be an ideal, set $n_I := \NkQideal (I) \geq 3$ and let $I$ factorize in $\mathcal{O}_k$ as $I = \p_1^{e_1} \cdots \p_m^{e_m}$ with $\p_1, \ldots, \p_m$ pairwise distinct. Then by definition $\omega_k(I) = m$. Because of multiplicity of the norm, we have $n_I = \NkQideal (\p_1)^{e_1} \cdots \NkQideal (\p_m)^{e_m} =  p_1^{f(\p_1)e_1} \cdots  p_m^{f(\p_m)e_m}$, where $p_i \in \Z$ is the unique prime such that $\p_i$ lies above $p_i$ and $f(\p_i)$ is the inertia degree of $\p_i$ over $p_i$. The primes $p_i$ are not necessarily pairwise distinct, but because there are at most $d=[k:\Q]$ prime ideals of $\Ok$ above any prime in $\Z$, we see that $m \leq d \cdot \omega (n_I)$.
		Because $n_I \geq 3$, we have that $\omega(n_I) \leq 1.3841 \frac{\log n_I}{\log \log n_I} $, as per  Th\'{e}or\`{e}me 11 of \cite{Robin:1983Estimation}.
		So, for $c_{20}(d):= 1.3841d$ we have
		\[\omega_k(I) \leq d \omega(n_I)  \leq  1.3841 d \frac{\log n_I}{\log \log n_I} = c_{20}(d) \frac{\log n_I}{\log \log n_I}. \qedhere \]
	\end{proof}
	Because $\Delta_{E'} \in \Ok$, we have that $\omega_k(\D_{E/k}) \leq \omega_k(\Delta_{E'} \Ok)$ and $\NkQideal (\Delta_{E'} \Ok) \geq 16 >e^e$, so the previous lemma together with the bound \eqref{norm_bound_logB} gives us that
	\begin{align*}
		\omega_k(\D_{E/k}) &\leq c_{20}(d) \frac{\log \abs{\NkQ (\Delta_{E'})} }{\log \log \abs{\NkQ (\Delta_{E'}) }} \leq c_{20}(d) \frac{c_1(k )\log B}{\log(c_1(k) \log B)} \\
												& \leq c_{21}(k) \frac{\log B}{\log \log B},
	\end{align*}
	with $c_{21}(k) := c_{20}(d) c_1(k)$; for the second inequality we have also used the fact that the function $x \mapsto x/\log x$ is increasing on $(e,\infty)$, while the third one follows from $c_1(k) \geq 1$. Combining this with the bound on the rank from Lemma \ref{rank_bound_omega}, we get
	\begin{equation}
		r \leq c_{18}(k) + c_{19} c_{21}(k)\frac{\log B}{\log \log B} \leq c_{22}(k)\frac{\log B}{\log \log B} ,
		\label{rank_bound_logB}
	\end{equation}
	with $c_{22}(k) = c_{18}(k) + c_{19} c_{21}(k)$.

	\section{Finishing the proof of Theorem \ref{main_thm_can}} \label{Finishing_proof}
	With the results of the previous section we can return to bounding $\mathcal{N}_E (B) $, with the assumptions that $E/k$ is an elliptic curve given by \eqref{weierstrass_equation} which satisfies $E(k) \cap E[\ell] \neq \{O\}$ for some fixed prime $\ell \in \Z$. We also assume $\log B \geq \max\{h(E),e\}$. For ease of notation we write $r = \rank (E(k))$.
	
	Firstly we return to a special case when $E$ has everywhere good reduction. Then Theorem \ref{main_thm_can} follows from Lemma \ref{lemma_evr_good_red_NB} and Lemma \ref{rank_bound_omega}.
	\begin{lemma}
		\label{final_proof_good_red}
		Let $E$ be an elliptic curve as above and with everywhere good reduction over $k$, while $B$ and $\mathcal{N}_E (B)$ are as before. Then
		\[\mathcal{N}_E (B) \leq B^{{c_{23}(k)}/\log \log B}.\]
	\end{lemma}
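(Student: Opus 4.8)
The plan is to combine the counting bound for curves with everywhere good reduction, Lemma \ref{lemma_evr_good_red_NB}, with the rank bound of Lemma \ref{rank_bound_omega}, exploiting the fact that everywhere good reduction makes the latter completely uniform in $E$. Indeed, this lemma is nothing more than the instance of Proposition \ref{lemma_bounded_rank} obtained once we know that the rank of $E(k)$ is bounded by a constant depending only on $k$.

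First I would note that, since $E$ has good reduction at every finite place of $k$, its minimal discriminant $\D_{E/k}$ is the unit ideal of $\Ok$, whence $\omega_k(\D_{E/k}) = 0$. Substituting this into Lemma \ref{rank_bound_omega} gives at once
\[ r := \rank E(k) \leq c_{18}(k), \]
a bound independent of $E$ (and, as recorded in that lemma, depending only on $d$, $\ell$ and $\rank_{\Z/\ell\Z} Cl_k$ when $k$ contains a primitive $\ell$-th root of unity). I would then feed this into Lemma \ref{lemma_evr_good_red_NB}, which applies precisely because $E$ has everywhere good reduction, to obtain
\[ \mathcal{N}_E(B) \leq \abs{\Ektors}\,\bigl(c_7(d)\log B\bigr)^{\frac{1}{2}r} \leq \abs{\Ektors}\,\bigl(c_7(d)\log B\bigr)^{\frac{1}{2}c_{18}(k)}. \]
Applying the torsion estimate \eqref{tors_bound_final}, i.e. $\abs{\Ektors} \leq B^{c_{11}(k)/\log\log B}$, together with the elementary inequality \eqref{bound_logB} (used with $C_1 = c_7(d)^{\frac{1}{2}c_{18}(k)}$ and $C_2 = \frac{1}{2}c_{18}(k)$) to convert the power of $\log B$ into $B^{c/\log\log B}$, and adding the two exponents, yields
\[ \mathcal{N}_E(B) \leq B^{c_{23}(k)/\log\log B}, \qquad c_{23}(k) = c_{11}(k) + \tfrac{1}{2e}c_{18}(k)\log c_7(d) + \tfrac{2}{e^2}c_{18}(k) \]
(or any larger explicit constant), which is the claim.

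There is essentially no serious obstacle here, as all the substantial work has already been carried out in Sections \ref{Lower_bound_height} and \ref{Bounding_rank}; one only has to chain the relevant estimates together. The one point worth stressing is the conceptual observation that drives the argument: the everywhere-good-reduction hypothesis is exactly what kills the $\omega_k(\D_{E/k})$ term in Lemma \ref{rank_bound_omega}, so that the Mordell--Weil rank — and hence the exponent appearing in Lemma \ref{lemma_evr_good_red_NB} — is bounded by a constant depending only on the field $k$, placing $E$ squarely within the scope of the bounded-rank Proposition \ref{lemma_bounded_rank}.
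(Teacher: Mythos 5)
Your proof is correct and follows essentially the same route as the paper: everywhere good reduction gives $\omega_k(\D_{E/k})=0$, so Lemma \ref{rank_bound_omega} yields $r \leq c_{18}(k)$, and combining Lemma \ref{lemma_evr_good_red_NB} with the torsion bound \eqref{tors_bound_final} and the elementary inequality \eqref{bound_logB} gives the claim, with the same constant the paper obtains. The additional remark that this is the everywhere-good-reduction case of Proposition \ref{lemma_bounded_rank} is accurate and consistent with how the paper organizes the argument.
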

	\begin{proof}
		With the same assumptions on $E$ in Lemma \ref{lemma_evr_good_red_NB} we had
		\begin{align*}
			 \mathcal{N}_E (B) &\leq \abs{\Ektors} \left(c_7(k)\log B \right)^{r/2}.
		\end{align*}
		Additionally, we have $\abs{\Ektors} \leq B^{c_{11}(k)/ \log \log B}$ due to \eqref{tors_bound_final}. Moreover, because $E$ has everywhere good reduction over $k$, we have $\omega_k(\D_{E/k	}) = 0$, so the rank bound from Lemma \ref{rank_bound_omega} becomes $r \leq c_{18}(k)$. In total, this gives us
		\begin{align*}
			\mathcal{N}_E (B) \leq B^{c_{11}(k)/ \log \log B} \left(c_7(k)\log B \right)^{ c_{18}(k)/2} \leq B^{c_{23}(k)/\log \log B},
		\end{align*}
		where $c_{23}(k) \geq  c_{11}(k) + \frac{1}{2} c_{18}(k)(\frac{1}{e}\log c_7(k) + \frac{4}{e^2})$, due to \eqref{bound_logB}).
	\end{proof}
	With this in mind, from now on we assume that $E$ does not have good reduction at all places of $k$. So with Lemma \ref{lemma_NB_first_bound} and the bound on the torsion \eqref{tors_bound_final}) we have
	\begin{align*}
		\mathcal{N}_E (B) \leq B^{c_{11}(k)/ \log \log B} \left(c_4(k) \frac{\log B}{ \log \NkQideal(\F_{E/k})}\right)^{\frac{7}{2} r}. 
	\end{align*}
	Furthermore, using \eqref{rank_bound_logB} we see that
	\begin{equation*}
		c_4(k)^{\frac{7}{2}r} \leq c_4(k)^{\frac{7}{2} c_{22}(k) \frac{\log B}{\log \log B} } \leq B^{c_{24}(k)/ \log \log B},
	\end{equation*}
	for $c_{24}(k)= \frac{7}{2} c_{22}(k) \log c_4(k)$, and so
	\begin{equation}
		\mathcal{N}_E(B) \leq B^{c_{25}(k)/\log \log B} \left(\frac{\log B}{ \log \NkQideal(\F_{E/k})} \right)^{\frac{7}{2}r}.
	\end{equation}
	This shows that it is enough to bound 
	\begin{equation*}
	\label{N_B_bound_final}
	\left(\frac{\log B}{ \log \NkQideal(\F_{E/k})} \right)^r = \exp\left(r \log\left(\frac{\log B}{ \log \NkQideal(\F_{E/k})}\right)\right).
	\end{equation*}
	
	Because the primes of $\Ok$ dividing the minimal discriminant are precisely the same ones dividing the conductor (exactly the ones at which $E$ has bad reduction), we have that $\omega_k(\F_{E/k}) = \omega_k(\D_{E/k	})$. So Lemma \ref{rank_bound_omega} implies
	\begin{equation}
		\label{rank_bound_1}
		r \leq c_{26}(k) \omega_k (\F_{E/k}),
	\end{equation}
	for $c_{26}(k) = c_{18}(k)+c_{19} $, because $\omega_k(\F_{E/k}) \geq 1$ as we have assumed that $E$ does not have everywhere good reduction.
	
	We will address the case when $\NkQideal(\F_{E/k})$ is small (i.e. at most 15) later. Now if $\NkQideal(\F_{E/k}) \geq 3$, then due to Lemma \ref{lemma_omega_k} we have
	\begin{equation*}
		r \leq c_{26}(k) c_{20}(d) \frac{\log \NkQideal (\F_{E/k})}{\log \log \NkQideal (\F_{E/k})}.
	\end{equation*}
	Finally, to finish the proof of the theorem, we will need the following lemma:
	\begin{lemma}
		\label{lemma_final_function}
		Let $\log B \geq e$, $x \ge e$ and
		\begin{equation*}
			f(x) := \frac{x}{\log x} \log \left(\frac{\log B}{x}\right).
		\end{equation*}
		Then $f(x) \leq c_{27} \frac{\log B}{\log \log B}$, where $c_{27} = \frac{16}{e^2}$.
	\end{lemma}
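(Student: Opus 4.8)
The plan is to write $L := \log B \ge e$ and first dispose of the range $x \ge L$: there $\log(L/x) \le 0$, so $f(x) \le 0$ while the right-hand side is positive and the inequality is trivial. On the remaining range $e \le x \le L$ I would substitute $x = L^t$ with $t = \log x / \log L \in [1/\log L,\, 1]$, which gives
\[
f(L^t) = \frac{L^t}{t\log L}\cdot (1-t)\log L = L^t\,\frac{1-t}{t} =: g(t),
\]
so that it suffices to bound $g$ on $[1/\log L,\,1]$ by $\tfrac{16}{e^2}\,L/\log L$.

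I would then analyse $g$ through its logarithmic derivative $(\log g)'(t) = \log L - \tfrac{1}{t(1-t)}$. Since $t(1-t)\le \tfrac14$ on $(0,1)$: if $\log L \le 4$ then $(\log g)'\le 0$ throughout, so $g$ is non-increasing and its maximum on the interval is at the left endpoint $t_0 := 1/\log L$, where $L^{t_0}=e$ and hence $g(t_0) = e(\log L - 1)$. If $\log L > 4$, set $\varepsilon := 1/\log L < \tfrac14$; then $(\log g)'$ vanishes exactly at $t_\pm = \tfrac12(1\pm\sqrt{1-4\varepsilon})$, and $g$ is decreasing on $(0,t_-)$, increasing on $(t_-,t_+)$, and decreasing on $(t_+,1)$. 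Since $t_0(1-t_0) = \varepsilon(1-\varepsilon) < \varepsilon = t_-(1-t_-)$ with $t_0, t_-$ both in $(0,\tfrac12)$, one gets $t_0 < t_-$, so the maximum of $g$ on $[t_0,1]$ is $\max\{g(t_0),\,g(t_+)\}$. Hence it is enough to bound the two quantities $e(\log L - 1)$ and $g(t_+)$.

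For the first, putting $s := \log L \ge e$, the claim $e(s-1) \le \tfrac{16}{e^2}\,e^s/s$ is equivalent to $s(s-1)\le \tfrac{16}{e^3}e^s$; since $s^{-2}e^s$ is increasing for $s\ge 2$ and already exceeds $2$ at $s=e$, one has $e^s \ge 2s^2 \ge \tfrac{e^3}{16}s(s-1)$, as required. For $g(t_+)$, the relation $t_+(1-t_+)=\varepsilon$ lets me rewrite $g(t_+) = L^{t_+}/(t_+^2\log L)$; the elementary inequality $\sqrt{1-4\varepsilon}\le 1-2\varepsilon$ gives $t_+ \le 1-\varepsilon$, hence $L^{t_+} = e^{t_+\log L}\le e^{\log L - 1} = L/e$, while $t_+\ge \tfrac12$ gives $t_+^2 \ge \tfrac14$; combining, $g(t_+) \le \tfrac{4L}{e\log L} \le \tfrac{16}{e^2}\,\tfrac{L}{\log L}$ because $e \le 4$. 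Together these give $f(x) \le \tfrac{16}{e^2}\,\tfrac{\log B}{\log\log B}$ in all cases.

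The one genuine difficulty is that $f$ is not monotone in $x$, so the interior maximum has to be located explicitly; the substitution $x = L^t$ is what turns this into a tractable one-variable problem with a closed-form critical point, and the estimate $\sqrt{1-4\varepsilon}\le 1-2\varepsilon$ is precisely what keeps the resulting constant down to $16/e^2$ rather than something larger.
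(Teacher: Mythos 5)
Your overall strategy (substituting $x = L^t$ with $L=\log B$ and locating the interior critical point of $g(t)=L^t(1-t)/t$) is sound and genuinely different from the paper's proof, which instead splits at $x=\sqrt{\log B}$ and uses the two elementary inequalities $\log^2 t/\sqrt{t}\le 16e^{-2}$ and $\delta\log(2/\delta)\le 2e^{-1}$, avoiding any critical-point analysis; your treatment of $g(t_+)$ via $t_+(1-t_+)=\varepsilon$, $t_+\le 1-\varepsilon$ and $t_+\ge \tfrac12$ is correct. However, there is a concrete flaw in your endpoint estimate. You set $s:=\log L$ and assert $s\ge e$, but the hypothesis only gives $L=\log B\ge e$, hence $s=\log\log B\ge 1$; and the case $\log L\le 4$ of your argument, where the maximum is exactly $g(t_0)=e(s-1)$, involves all $s\in[1,4]$. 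On part of that range the intermediate inequality $e^s\ge 2s^2$ you rely on is simply false: at $s=2$ one has $e^2\approx 7.39<8$, and it fails for $s$ roughly in $(1.5,\,2.62)$, while your monotonicity argument for $e^s/s^2$ only starts at $s=e$. So, as written, the bound $g(t_0)=e(\log L-1)\le \frac{16}{e^2}\,L/\log L$ is not established when $1\le \log\log B< e$, i.e.\ for $e^e\le B< e^{e^e}$, a range the lemma (and its use in the paper) must cover.

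The gap is easily repaired, because the inequality you actually need, $s(s-1)\le \frac{16}{e^3}e^s$, does hold for every $s\ge 1$. For instance, for $1\le s\le e$ note that $e^s/s$ is increasing in $s\ge 1$, so $\frac{16}{e^2}\,e^s/s\ge \frac{16}{e}\approx 5.89$, while $e(s-1)\le e(e-1)\approx 4.67$; for $s\ge e$ your chain $e^s\ge 2s^2\ge \frac{e^3}{16}s(s-1)$ is valid. (Alternatively, $s(s-1)e^{-s}$ is maximized at $s=(3+\sqrt5)/2$ with value well below $16/e^3$.) With the claim ``$s\ge e$'' replaced by such an argument, your proof is complete and yields the same constant $c_{27}=16/e^2$, since your two cases give $\max\{16/e^2,\,4/e\}=16/e^2$.
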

	\begin{proof}
		We separate the proof into two cases, depending on whether $x < \sqrt{\log B}$ or $x \geq \sqrt{\log B}$.
		Assuming that $ x < \sqrt{\log B}$, we have that
		\begin{equation*}
			f(x) \leq \sqrt{\log B} \log \log B,
		\end{equation*}
		because $x \geq e$ implies $\log x \geq 1$. Since $\log^2 t / \sqrt{t} \leq 16e^{-2}$ for all $t > 1$, we have $\sqrt{t} \log t \leq 16e^{-2} \cdot t/\log(t)$. By setting $t = \log B$ we get
		\begin{equation*}
			f(x) \leq 16e^{-2	} \frac{\log B}{\log \log B},
		\end{equation*}
		which proves the lemma in the first case.
		
		On the other hand if $x \geq \sqrt{\log B}$, then also $\log x \geq \frac{1}{2} \log \log B$. Again because $x \geq e > 1$ we have $ \frac{x}{\log x} > 0$, so there exists $\delta > 0$ such that $\frac{x}{\log x} = \delta \frac{\log B}{\log \log B}$. Putting this into $f$ we get
		\begin{align*}
			f(x) = \delta \frac{\log B}{\log \log B} \log \left(\frac{\log \log B}{\delta \log x}\right) \leq  \delta \frac{\log B}{\log \log B} \log(\frac{2}{\delta}).
		\end{align*}
		Because $\delta \log(2/\delta) \leq 2e^{-1}$ for all $\delta >0$, this implies
		\begin{equation*}
			f(x) \leq 2e^{-1} \frac{\log B}{\log \log B}.	\qedhere
		\end{equation*}
	\end{proof}

	We this we have enough to finish the proof of the main theorem.
	\begin{proof}[Proof of Theorem \ref{main_thm_can}]
		In Lemma \ref{final_proof_good_red} we have already proven the theorem in case when $E$ has good reduction at all places of $k$. So we can assume this is not the case for $E$ and therefore $\log \NkQideal (\F_{E/k}) \geq \log 2$. Let $r$ be the rank of $E(k)$ as before. 
		With assumptions in Theorem \ref{main_thm_can}, from \eqref{N_B_bound_final} it follows that 
		\begin{align*}
			\mathcal{N}_E (B) &\leq B^{c_{25}(k)/ \log \log B} \exp\left(\frac{7}{2}r\log\left(\frac{\log B}{ \log \NkQideal(\F_{E/k})}\right)\right).
		\end{align*}
		Let us first handle the case $\log \NkQideal (\F_{E/k}) < e$. We have that $\NkQideal(\F_{E/k}) \in \N$ because $\F_{E/k}$ is an integral ideal of $\Ok$. So $\NkQideal (\F_{E/k})<e^e$ implies $\NkQideal(\F_{E/k}) \in \{2, \ldots,15\}$. From this we can easily see that $\omega (\NkQideal (\F_{E/k})) \leq 2$. Combining this with \eqref{rank_bound_1} we get $r \leq 2d c_{26}(k)$. By taking $c_{12}(k) = 2d c_{26}(k)$ in Proposition \ref{lemma_bounded_rank} the claim follows.
		
		Assuming that $\log \NkQideal (\F_{E/k}) \geq e > \log 3$ we can apply Lemma \ref{lemma_omega_k} to $I = \F_{E/k}$ and combine this with \eqref{rank_bound_1} to get
		\begin{align*}
			r \leq c_{26}(k) c_{20}(d) \frac{\log \NkQideal (\F_{E/k})}{\log \log \NkQideal (\F_{E/k})}.
		\end{align*}
		Applying Lemma \ref{lemma_final_function} to $x = \log \NkQideal (\F_{E/k}) \geq e$ we get
		\begin{align*}
			\mathcal{N}_E (B) &\leq B^{c_{25}(k)/ \log \log B} \exp\left(\frac{7}{2}c_{26}(k)  c_{20}(d) c_{27} \frac{\log B}{\log \log B}\right) \\
				& \leq B^{c_{28}(k)/\log \log B}.  \qedhere
		\end{align*}
	\end{proof}
	Finally, we notice that if $E$ is as in the statement of Theorem \ref{main_thm_integral}, the constant $c_{18}(k)$ in Lemma \ref{rank_bound_omega} does not depend on $\Delta_k$, but only on $d, \ell$ and the $\ell$-part of the class group. The only other place where the dependence on $\Delta_k$ appeared in the constants was in $c_1(k)$. So by invoking Remark \ref{remark_integral} we get the proof of Theorem \ref{main_thm_integral}.
	\begin{proof}[Proof of Theorem \ref{main_thm_integral}]
		In Remark \ref{remark_integral} we noticed that if $E$ is defined by a Weierstrass equation with coefficients in $\Ok$, we can use inequality \eqref{norm_bound_logB_integral} instead of \eqref{norm_bound_logB}. This means that we can replace $c_1(k)$ with $\tilde{c}_1(d)$ wherever it appears in the proof of Theorem \ref{main_thm_can} and preceding lemmas. The only other place where the dependence on $\Delta_k$ appears is in Lemma \ref{rank_bound_omega} in the case when $\ell \geq 3$ and $k$ does not contain a primitive $\ell$-th root of unity. Thus, with assumptions of Theorem \ref{main_thm_integral} this will give us a final constant that depends only on $d$ and $\rank_{\Z / \ell \Z} Cl_k[\ell]$.
	\end{proof}
	
	\printbibliography
\end{document}